\newtheorem{theorem}{Theorem}[section]
\newtheorem{lemma}[theorem]{Lemma}
\newtheorem{proposition}[theorem]{Proposition}
\newtheorem{corollary}[theorem]{Corollary}
\theoremstyle{definition}
\newtheorem{definition}[theorem]{Definition}
\newtheorem{problem}[theorem]{Problem}
\theoremstyle{remark}
\newtheorem{remark}[theorem]{Remark}
\newtheorem{notation}[theorem]{Notation}
\newtheorem{claim}[theorem]{Claim}
\numberwithin{equation}{section}
\newcommand{\abs}[1]{\lvert#1\rvert}
\newcommand{\norm}[1]{\lVert #1 \rVert}
\newcommand{\T}{\mathrm{T}}
\newcommand{\M}{\mathrm{M}}
\newcommand{\R}{\mathbb{R}}
\newcommand{\Z}{\mathbb{Z}}
\newcommand{\x}{\vect{x}}
\newcommand{\I}{\mathrm{I}}
\newcommand{\N}{\mathbb{N}}
\newcommand{\SL}{\mathrm{SL}}
\newcommand{\GL}{\mathrm{GL}}
\newcommand{\dd}{\mathrm{d}}
\newcommand{\E}{\mathrm{E}}
\newcommand{\vect}[1]{\boldsymbol{\mathbf{#1}}}
\newcommand{\Span}{\operatorname{Span}}
\newcommand{\Ad}{\operatorname{Ad}}
\newcommand{\inv}{^{-1}}
\begin{document}

\title{Equidistribution of curves in homogeneous spaces and Dirichlet's approximation theorem for matrices}

%    Information for first author
\author{Nimish Shah}
%    Address of record for the research reported here
\address{Department of Mathematics, The Ohio State University, Columbus, OH 43210}
%    Current address
\email{shah@math.osu.edu}
%    \thanks will become a 1st page footnote.
%\thanks{}

%    Information for second author
\author{Lei Yang}
\address{Mathematical Sciences Research Institute, Berkeley, CA, 94720}
\email{lyang@msri.org}

\curraddr{Einstein Institute of Mathematics, Hebrew University of Jerusalem, Jerusalem, 9190401, Israel}
\email{yang.lei@mail.huji.ac.il}
%\thanks{}

\thanks{N. Shah acknowledges support from an NSF grant.}
\thanks{L. Yang is supported in part by a Postdoctoral Fellowship at MSRI}
%    General info
\subjclass[2010]{22E40; 11J83}

\date{}

%\dedicatory{}

\keywords{Dirichlet's theorem, Diophantine approximation, homogeneous spaces, equidistribution, Ratner's theorem}

\begin{abstract}
In this paper, we study an analytic curve $\varphi: I=[a,b]\rightarrow \mathrm{M}(m\times n, \R)$ 
in the space of $m$ by $n$ real matrices, and show that if $\varphi$ satisfies certain geometric condition, then 
for almost every point on the curve, the Diophantine approximation given by Dirichlet's Theorem can not be improved. To do this, 
we embed the curve into some homogeneous space $G/\Gamma$, and prove that under the action of some expanding diagonal subgroup $A= \{a(t): t \in \R\}$, the translates of the curve tend to be equidistributed in $G/\Gamma$, as $t \rightarrow +\infty$.
\end{abstract}

\maketitle
%%%%%%%%%%%%%%%%%%%%%%%%%%%%%%%%%%%%%%%%%%%%%%%%%%%%%%%%%%%%%%%%%%%%%%%%%%%%%%%%%%%%%%%%%%%%%%%%%%%%%%%%%%%%%
%%%%%%%%%%%%%%%%%%%%%%%%%%%%%%%%%%%%%%%%%%%%%%%%%%%%%%%%%%%%%%%%%%%%%%%%%%%%%%%%%%%%%%%%%%%%%%%%%%%%%%%%%%%%%
%%%%%%%%%%% SECTION : INTRODUCTION %%%%%%%%%%%%%%%%%%%%%%%%%%%%%%%%%%%%%%%%%%%%%%%%%%%%%%%%%%%%%%%%%%%%%%%%%%
%%%%%%%%%%%%%%%%%%%%%%%%%%%%%%%%%%%%%%%%%%%%%%%%%%%%%%%%%%%%%%%%%%%%%%%%%%%%%%%%%%%%%%%%%%%%%%%%%%%%%%%%%%%%%
%%%%%%%%%%%%%%%%%%%%%%%%%%%%%%%%%%%%%%%%%%%%%%%%%%%%%%%%%%%%%%%%%%%%%%%%%%%%%%%%%%%%%%%%%%%%%%%%%%%%%%%%%%%%%

\section{Introduction}

%%%%%%%%%%%%%%%%%%%%%%%%%%%%%%%%%%%%%%%%%%%%%%%%%%%%%%%%%%%%%%%%%%%%%%%%%%%%%%%%%%%%%%%%%%%%%%%%%%%%%%%%%%%%%%
%%%%%%%%%%%%%%%%%%%%%%%%%%%%%%%%%%%%%%%%%%%%%%%%%%%%%%%%%%%%%%%%%%%%%%%%%%%%%%%%%%%%%%%%%%%%%%%%%%%%%%%%%%%%%%
%%%%%%%%%%%%%%   DIOPHANTINE APPROXIMATION   %%%%%%%%%%%%%%%%%%%%%%%%%%%%%%%%%%%%%%%%%%%%%%%%%%%%%%%%%%%%%%%%%
%%%%%%%%%%%%%%%%%%%%%%%%%%%%%%%%%%%%%%%%%%%%%%%%%%%%%%%%%%%%%%%%%%%%%%%%%%%%%%%%%%%%%%%%%%%%%%%%%%%%%%%%%%%%%%
%%%%%%%%%%%%%%%%%%%%%%%%%%%%%%%%%%%%%%%%%%%%%%%%%%%%%%%%%%%%%%%%%%%%%%%%%%%%%%%%%%%%%%%%%%%%%%%%%%%%%%%%%%%%%%

\subsection{Diophantine approximation for matrices}
In 1842, Dirichlet proved the following result on simultaneous approximation of a matrix of real numbers by integral vectors:
{\em Given two positive integers $m$ and $n$, a matrix $\Phi \in \mathrm{M}(m\times n, \R)$, and any $N>0$,
there exist integral vectors $\vect{p} \in \Z^n\setminus\{\vect{0}\}$ and  $\vect{q} \in \Z^m$ such that 
\begin{equation} \label{eq:Dirichlet}
\|\vect{p}\| \leq N^m  \quad{and} \quad \norm{\Phi \vect{p} -\vect{q}} \leq N^{-n},
\end{equation}
where $\norm{\cdot}$ denotes the supremum norm; that is, $\norm{\x}:=\max_{1\leq i\leq k}\abs{x_i}$ for any
$\x =(x_1,x_2,\dots, x_k)\in \R^k$. }

\par Now we consider the following finer question: for a particular $m$ by $n$ matrix $\Phi$, could we improve Dirichlet's Theorem? By improving Dirichlet's 
Theorem, we mean there exists a constant $0 < \mu <1$, such that for all large $N > 0$, there exists nonzero integer vector $\vect{p} \in \Z^n$ with
$\|\vect{p}\| \leq \mu N^m$, and integer vector $\vect{q} \in \Z^m$ such that $\|\Phi \vect{p} - \vect{q}\| \leq \mu N^{-n}$. 
If such constant
$\mu$ exists, then we say $\Phi$ is $DT_{\mu}$-improvable. And if $\Phi$ is $DT_{\mu}$-improvable for some $0<\mu <1$, then we say $\Phi$ is 
$DT$-improvable (here $DT$ stands for Dirichlet's Theorem). 
\par This problem was firstly studied by Davenport and Schmidt in \cite{Daven_Schm}, 
in which they proved that almost every matrix $\Phi \in \M(m\times n,\R)$ is not $DT$-improvable. In \cite{Daven_Schm}, they also proved the 
following result. For $m=1$ and $n=2$, $\M(1\times 2, \R) = \R^2$, one considers the curve $\phi(s)=(s,s^2)$ in $\R^2$. Then for almost every $s\in \R$
with respect to the Lebesgue measure on $\R$, $\phi(s)$ is not $DT_{1/4}$ improvable. This result was generalized by Baker in \cite{Baker}: for any 
smooth curve in $\R^2$ satisfying some curvature condition, almost every point on the curve is not $DT_{\mu}$ improvable for some $0<\mu <1$ depending on the curve. Bugeaud \cite{Bugeaud} generalized the result of Davenport and Schmidt in the following sense: for $m=1$, and general 
$n$, almost every point on the curve $\varphi(s)=(s,s^2,\dots, s^n)$ is not $DT_{\mu}$-improvable for some small constant $0<\mu<1$. Their proofs 
are based on the technique of regular systems introduced in \cite{Daven_Schm}.
\par Recently, based on an observation of Dani \cite{Dani}, as well as Kleinbock and Margulis \cite{Klein_Mar},
Kleinbock and Weiss \cite{Klein_Weiss} studied this 
Diophantine approximation problem in the language of homogeneous dynamics, and proved the following result: for $m=1$ and arbitrary $n$, if 
an analytic curve in $\M(1\times n,\R) \cong \R^n$ satisfies some non-degeneracy condition, then almost every 
point on the curve is not $DT_{\mu}$-improvable for some small constant $0<\mu<1$ depending on the curve. Based 
on the same correspondence, Nimish Shah \cite{Shah_2} proved the following stronger result: for $m=1$ and general $n$, if an analytic curve 
$\varphi: I=[a,b] \rightarrow \R^n$ is not contained in a proper affine subspace, then almost every point on the curve is not $DT$-improvable. For $m=n$, Lei Yang \cite{Yang_1} provided a geometric condition and proved that if an analytic curve 
$\varphi: I =[a,b] \rightarrow \M(n\times n, \R)$ satisfies the condition, then almost every point on $\varphi$ is not 
$DT$-improvable. The geometric condition given there provides some hint on solving the problem for general $(m,n)$, and will be discussed in detail later.
\par The purpose of this paper is to give a geometric condition for each $(m,n)$, and show that if an analytic curve 
$$\varphi: I=[a,b] \rightarrow \M(m\times n, \R)$$
satisfies the condition, then almost every point on $\varphi$ is not 
$DT$-improvable.
\par The geometric conditions called {\em generic} condition and {\em supergeneric} condition are defined as follows:
%%%%%%%%%%%%%%%%%%%%%%%%%%%%%%%%%%%%%%%%%%%%%%%%%%%%%%%%%%%%%%%%%%%%%%%%%%%%%%%%%%%%%%%%%%%%%%%%%%%%
%%%%%%%%%%%%%%%%%%%%%%%%%%%%%%%%%%%%%%%%%%%%%%%%%%%%%%%%%%%%%%%%%%%%%%%%%%%%%%%%%%%%%%%%%%%%%%%%%%%%
%%%%%%%%%%%%%%%%  Definition: Generic and Supergeneric %%%%%%%%%%%%%%%%%%%%%%%%%%%%%%%%%%%%%%%%%%%%%
%%%%%%%%%%%%%%%%%%%%%%%%%%%%%%%%%%%%%%%%%%%%%%%%%%%%%%%%%%%%%%%%%%%%%%%%%%%%%%%%%%%%%%%%%%%%%%%%%%%%
%%%%%%%%%%%%%%%%%%%%%%%%%%%%%%%%%%%%%%%%%%%%%%%%%%%%%%%%%%%%%%%%%%%%%%%%%%%%%%%%%%%%%%%%%%%%%%%%%%%%
\begin{definition}
\label{def:geometric_condition}
\par $\quad$
\par For any $m$ and $n$, let 
$$\varphi: I=[a,b]\rightarrow \M(m\times n, \R)$$
denote an analytic curve.
\par For $m=n$, we say $\varphi$
is {\em generic\/} at $s_0\in I$ if there exists a subinterval $J_{s_0} \subset I$ such that for any $s \in J_{s_0}$, $\varphi(s) - \varphi(s_0)$ is invertible. 

\par In order to define {\em supergeneric} condition, we need additional notation. 

\par We consider the following two embeddings from $\M(m\times m,\R)$
to the Lie algebra $\mathfrak{sl}(2m, \R)$ of $\SL(2m,\R)$:
$$
\mathfrak{n}^{+}: X \in  \M(m\times m,\R) \mapsto \begin{bmatrix}\vect{0} & X \\ & \vect{0}\end{bmatrix} \in \mathfrak{sl}(2m, \R),
$$
and
$$
\mathfrak{n}^{-}: X \in  \M(m\times m,\R) \mapsto \begin{bmatrix}\vect{0} &  \\ X & \vect{0}\end{bmatrix} \in \mathfrak{sl}(2m, \R).
$$
Let
\begin{equation} \label{eq:E}
\mathcal{E}=\begin{bmatrix}\I_m & \\ & - \I_m\end{bmatrix} \in \mathfrak{sl}(2m,\R).
\end{equation}

A Lie subgroup $L$ of $H=\SL(2m, \R)$ is called {\em observable\/} if there exists a finite dimensional linear representation $V$ of 
$H$ and a nonzero vector $v \in V$ such that the subgroup of $H$ stabilizing $v$ is equal to $L$. A Lie subalgebra $\mathfrak{l}$ is called an {\em observable\/} Lie subalgebra of $\mathfrak{sl}(2m,\R)$, if it is the Lie algebra of some observable Lie subgroup $L \subset H$.

For the case of $m=n$, the curve $\varphi$ is called {\em supergeneric\/} at $s_0 \in I$ if it is {\em generic} at $s_0$ (with subinterval $J_{s_0} \subset I$), and for any proper observable subalgebra $\mathfrak{l}$ of $\mathfrak{sl}(2m,\R)$ containing $\mathcal{E}$, we have 
\begin{equation} \label{eq:supergeneric}
\{ \mathfrak{n}^{-}((\varphi(s_1)-\varphi(s_0))^{-1} -(\varphi(s_2)-\varphi(s_0))^{-1}): s_1, s_2 \in J_{s_0}\}\not\subset \mathfrak{l}.
\end{equation} 

\par For $m >n$, $\varphi$ is called {\em generic\/} ({\em supergeneric\/}) if its transpose 
$$\varphi^{\T}: I=[a,b] \rightarrow \M(n\times m, \R)$$
is {\em generic\/} ({\em supergeneric\/}).

\par For $m< n$, we express $\varphi(s)=[\varphi_{1}(s) ; \varphi_{2}(s)]$,
where $\varphi_1(s)$ is the first $m$ by $m$ block, and $\varphi_2(s)$ is the rest $m$ by $n-m$ block. We say $\varphi$ is {\em generic\/} ({\em supergeneric\/}) at $s_0\in I$, if there exists a subinterval $J_{s_0} \in I$ such that for any $s \in J_{s_0}$, $\varphi_1(s) - \varphi_1(s_0)$ is invertible; and if we define  $\psi: J_{s_0} \rightarrow \M(m \times (n-m), \R)$, by
\[
\psi(s) =  (\varphi_1(s)-\varphi_1(s_0))^{-1} (\varphi_2(s)-\varphi_2(s_0)) 
\]
then $\psi$ is {\em generic\/} ({\em supergeneric\/}) at some $s_1 \in J_{s_0}$. 

\par We say that $\varphi$ is {\em generic} ({\em supergeneric}) or satisfies {\em generic} ({\em supergeneric}) condition, if $\varphi$ is {\em generic} ({\em supergeneric}) at some $s_0\in I$. Since $\varphi$ is analytic, if it is {\em generic} ({\em supergeneric}) at one point of $I$ then it will be {\em generic} ({\em supergeneric}) at all but finitely many points of $I$. 
\end{definition}

%%%%%%%%%%%%%%%%%%%%%%%%%%%%%%%%%%%%%%%%%%%%%%%%%%%%%%%%%%%%%%%%%%%%%%%%%%%%%%%%%%%%%%%%%%%%%%%%%%%%%
%%%%%%%%%%%%%%%%%%%%%%%%%%%%%%%%%%%%%%%%%%%%%%%%%%%%%%%%%%%%%%%%%%%%%%%%%%%%%%%%%%%%%%%%%%%%%%%%%%%%%
%%%%%%%%%%%%%%%% End Definition: Generic and Supergeneric %%%%%%%%%%%%%%%%%%%%%%%%%%%%%%%%%%%%%%%%%%%
%%%%%%%%%%%%%%%%%%%%%%%%%%%%%%%%%%%%%%%%%%%%%%%%%%%%%%%%%%%%%%%%%%%%%%%%%%%%%%%%%%%%%%%%%%%%%%%%%%%%%
%%%%%%%%%%%%%%%%%%%%%%%%%%%%%%%%%%%%%%%%%%%%%%%%%%%%%%%%%%%%%%%%%%%%%%%%%%%%%%%%%%%%%%%%%%%%%%%%%%%%%

\begin{remark} 
$\quad$
\begin{enumerate}
\item In \cite{Yang_1}, it is proved that for $m=n$, if there exists $s_0 \in I$ and a subinterval $J_{s_0} \subset I$ such that the derivative $\varphi^{(1)}(s_0)$ is invertible, $\varphi(s) - \varphi(s_0)$ is invertible for any $s \in J_{s_0}$, and $\{(\varphi(s)- \varphi(s_0))^{-1}: s \in J_{s_0}\}$ is not contained in any proper affine subspace of $\M(m\times m, \R)$, then almost every point on the curve is not 
$DT$-improvable. Here the {\em supergeneric\/} condition is weaker than the condition needed in \cite{Yang_1}.

\item In the case of $m$ and $n$ being co-prime, the {\em generic} condition directly implies the {\em supergeneric} condition. 

\item For $m=1$ and general $n$, the genericness condition is equivalent to the condition that the curve is not contained in a proper affine subspace of $\R^n$.

\end{enumerate}
\end{remark}

In this paper we will prove the following result:
\begin{theorem}
\label{thm_diophantine_part}
For any $m$ and $n$, if an analytic curve 
$$\varphi: I=[a,b] \rightarrow \M(m\times n ,\R)$$
is {\em supergeneric\/}, then almost every point on $\varphi$ is not 
$DT$-improvable. If $(m,n)=1$, then the same result holds for {\em generic\/} analytic curves.
\end{theorem}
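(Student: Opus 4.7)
The plan is to deduce the theorem from an equidistribution statement for translates of the curve in the homogeneous space $G/\Gamma$, where $G=\SL(m+n,\R)$ and $\Gamma=\SL(m+n,\Z)$. By the Dani--Kleinbock--Margulis correspondence, the $DT_\mu$-improvability of $\Phi \in \M(m\times n,\R)$ is controlled by whether the orbit
\[
\{a(t)\,u(\Phi)\,\Gamma : t \geq 0 \}, \qquad a(t) = \mathrm{diag}(e^{nt}\I_m, e^{-mt}\I_n), \quad u(\Phi) = \begin{pmatrix}\I_m & \Phi \\ 0 & \I_n\end{pmatrix},
\]
eventually lies in a certain explicit compact-like region of $G/\Gamma$ depending on $\mu$. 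Following Kleinbock--Weiss and Shah, it therefore suffices to prove that for every $f\in C_c(G/\Gamma)$,
\[
\lim_{t\to +\infty} \frac{1}{b-a}\int_a^b f(a(t)\,u(\varphi(s))\,\Gamma)\,\dd s = \int_{G/\Gamma} f\,\dd\mu_G,
\]
where $\mu_G$ is the $G$-invariant probability measure; a standard Borel--Cantelli type argument then yields non-improvability at almost every $s \in I$.

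The rectangular cases $m \neq n$ are to be reduced to the square case through the very definition of (super)genericness. For $m>n$ one passes to the transpose, using equivariance of the flow. For $m<n$, the block decomposition $\varphi=[\varphi_1;\varphi_2]$ with $\varphi_1(s)-\varphi_1(s_0)$ invertible on $J_{s_0}$ allows one to rewrite $a(t)\,u(\varphi(s))$, modulo a bounded correction, as a translate along the curve $\psi(s)=(\varphi_1(s)-\varphi_1(s_0))^{-1}(\varphi_2(s)-\varphi_2(s_0))$, which has strictly smaller second dimension, so the problem reduces recursively to a square case.

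The heart of the argument is thus $m=n$. A key ingredient is the Bruhat-type identity, valid for invertible $X \in \M(m\times m,\R)$:
\[
u^{+}(X) = u^{-}(X^{-1})\,\mathrm{diag}(X, X^{-1})\,\omega\,u^{-}(X^{-1}), \qquad \omega=\begin{pmatrix}0 & \I_m \\ -\I_m & 0\end{pmatrix}.
\]
Applied to $X=\varphi(s)-\varphi(s_0)$, this expresses the curve in terms of $\mathfrak{n}^{\pm}$ and $\mathcal{E}$ inside $H=\SL(2m,\R)$. Extracting the leading-order unipotent piece after conjugation by $a(t)$, the study of the limit distribution of $a(t)\,u^{+}(\varphi(s))\,\Gamma$ reduces to that of a family of $u^{-}$-trajectories expanded by $a(t)$, to which Ratner's measure classification theorem and the Dani--Margulis linearization technique apply. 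Any limit measure is invariant under a nontrivial unipotent subgroup, hence by Ratner is a convex combination of algebraic measures supported on $L$-homogeneous subsets for closed subgroups $L \subseteq H$ containing $\mathcal{E}$. The \emph{supergeneric} hypothesis \eqref{eq:supergeneric}, which forbids $\{\mathfrak{n}^{-}((\varphi(s_1)-\varphi(s_0))^{-1}-(\varphi(s_2)-\varphi(s_0))^{-1})\}$ from lying in any proper observable $\mathfrak{l}\ni\mathcal{E}$, is precisely what eliminates every proper $L$ from the resulting dichotomy, forcing the limit measure to be $\mu_G$.

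The main technical obstacle will be the linearization step itself. For each proper observable $\mathfrak{l}$ one must construct a representation $V_{\mathfrak{l}}$ of $H$ with a vector $v_{\mathfrak{l}}$ whose stabilizer has Lie algebra $\mathfrak{l}$, and then prove a Kleinbock--Margulis-style $(C,\alpha)$-good estimate showing that the measure of
\[
\{s\in J_{s_0}: \|a(t)\,u^{+}(\varphi(s))\cdot v_{\mathfrak{l}}\| \leq \varepsilon\}
\]
tends to zero uniformly in $\mathfrak{l}$ across a suitable finite list of ``critical'' subalgebras. The supergeneric hypothesis will be used here to extract a nonvanishing analytic Taylor coefficient along the curve in any such representation, supplying the polynomial non-degeneracy needed for these estimates. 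Piecing together the resulting non-concentration bounds with a compactness argument on the set of observable subgroups, in the spirit of Dani--Margulis, completes the proof.
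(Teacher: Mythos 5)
Your high-level framework is the same as the paper's: the Dani--Kleinbock--Margulis correspondence reduces to an equidistribution statement in $G/\Gamma$, Ratner's theorem plus linearization supply the structure of limit measures, the Bruhat identity $u(r)=u^{-}(r^{-1})\sigma(r)u^{-}(r^{-1})$ (equivalently $\sigma(r)u(-r)u^{-}(r^{-1})$) encodes how $u(\varphi(s)-\varphi(s_0))$ interacts with the diagonal action, and supergenericness is what eliminates intermediate subgroups. The reduction of the rectangular case via the transpose and via $\psi(s)=(\varphi_1(s)-\varphi_1(s_0))^{-1}(\varphi_2(s)-\varphi_2(s_0))$ is also the paper's reduction, although in the paper that recursion lives inside the proof of a representation-theoretic lemma (Lemma~\ref{basic_lemma_1}, Lemma~\ref{basic_lemma_2}), not at the level of the dynamical system itself; one cannot literally reduce the $a(t)$-flow for $(m,n)$ to one for $(m,n-m)$ because the $z$-corrections $u'(\psi(s))$ lie in $Z_H(A)$ and do not commute with the representation decomposition in the naive way your sketch suggests.

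The genuine gap is in the linearization step. You propose, for each proper observable subalgebra $\mathfrak{l}\ni\mathcal{E}$, building a representation $V_{\mathfrak{l}}$ with a vector $v_{\mathfrak{l}}$ stabilized by $\mathfrak{l}$, proving $(C,\alpha)$-good non-concentration for $a(t)u(\varphi(s))v_{\mathfrak{l}}$, and then running a compactness argument over observable subalgebras. This plan does not close: the family of proper observable subalgebras is neither finite nor compact in any useful sense, observability alone gives you no discreteness of $\Gamma$-orbits of $v_{\mathfrak{l}}$ (so the Dani--Margulis mechanism for passing from quantitative estimates to measure-theoretic conclusions has no anchor), and the subgroups that matter for Ratner's theorem are those $L\in\mathcal{L}$ with $L\cap\Gamma$ a lattice in $L$, a countable set unrelated a priori to observability. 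The paper avoids all of this by fixing the single representation $V=\bigoplus_{i}\bigwedge^i\mathfrak{g}$ and using the fact that $\Gamma p_L$ is discrete for $L\in\mathcal{L}$; linearization then produces, for some single $\gamma\in\Gamma$, the purely algebraic condition $u(\varphi(s))g\gamma p_L\in V^{-0}(A)$ for all $s$ (Proposition~\ref{prop_algebraic_condition}). Observability and the subalgebra $\mathfrak{l}\ni\mathcal{E}$ enter only afterwards, in Lemma~\ref{basic_lemma_2}, where the $\SL(2,\R)$ weight estimate $\lambda^{\max}(u(r)v)\geq -\lambda^{\max}(v)$, with equality forcing $v=u^{-}(-r^{-1})v^{\max}$ (Lemma~\ref{yang_lemma}), is what shows $v^{0}(A)$ is fixed by $A$ and by the $\mathfrak{n}^{-}$-elements in \eqref{eq:supergeneric}; supergenericness then rules out every proper observable stabilizer and forces $V$ to be trivial. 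Your proposal has the supergeneric condition doing quantitative non-concentration work that it cannot do; its actual role is qualitative and post-linearization. You also need the non-divergence half of the argument (Proposition~\ref{prop_no_escape_mass}) to know that limit measures are probability measures before Ratner applies, and that step already requires a representation-theoretic lemma (Lemma~\ref{basic_lemma_1}) under the \emph{generic} hypothesis alone.
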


%%%%%%%%%%%%%%%%%%%%%%%%%%%%%%%%%%%%%%%%%%%%%%%%%%%%%%%%%%%%%%%%%%%%%%%%%%%%%%%%%%%%%%%%%%%%%%%%%
%%%%%%%%%%%%%%%%%%%%%%%%%%%%%%%%%%%%%%%%%%%%%%%%%%%%%%%%%%%%%%%%%%%%%%%%%%%%%%%%%%%%%%%%%%%%%%%%%
%%%%%%%%%%%%%% EQUIDISTRIBUTION OF EXPANDING CURVES %%%%%%%%%%%%%%%%%%%%%%%%%%%%%%%%%%%%%%%%%%%%%
%%%%%%%%%%%%%%%%%%%%%%%%%%%%%%%%%%%%%%%%%%%%%%%%%%%%%%%%%%%%%%%%%%%%%%%%%%%%%%%%%%%%%%%%%%%%%%%%%
%%%%%%%%%%%%%%%%%%%%%%%%%%%%%%%%%%%%%%%%%%%%%%%%%%%%%%%%%%%%%%%%%%%%%%%%%%%%%%%%%%%%%%%%%%%%%%%%%

\subsection{Equidistribution of expanding curves on homogeneous spaces}
\par Now we describe the correspondence between Diophantine approximation and homogeneous dynamics as follows.
\par Let $G= \mathrm{SL}(m+n,\R)$, and let $\Gamma = \mathrm{SL}(m+n,\Z)$. The homogeneous space $G/\Gamma$ can be identified with the space of unimodular lattices of $\R^{m+n}$. 
Every point $g\Gamma$ corresponds to the unimodular lattice $g\Z^{m+n}$. For $r>0$, let $B_{r}$ denote the 
ball in $\R^{m+n}$ centered at the origin and of radius $r$. For any $0<\mu<1$, the subset 
$$K_{\mu}:= \{\Lambda \in G/\Gamma: \Lambda\cap B_{\mu} = \{\vect{0}\}\}$$
contains an open neighborhood of $\Z^{m+n}$ in $G/\Gamma$.
Let us define the diagonal subgroup $A=\{a(t): t \in \R\}$ by
\begin{equation} \label{eq:a(t)}
a(t):= \begin{bmatrix}e^{nt} \I_m & \\ & e^{-mt}\I_n \end{bmatrix}.
\end{equation}

Now we consider the embedding 
              $$u: \M(m\times n, \R) \rightarrow \mathrm{SL}(m+n,\R)$$
              sending $\Phi \in \M(m\times n , \R)$ to 
              \begin{equation} \label{eq:u(Phi)} 
              u(\Phi):= \begin{bmatrix}\I_m & \Phi \\ & \I_n \end{bmatrix}.
              \end{equation}

Suppose for some $0<\mu<1$, and any $N>0$ large enough, there exist nonzero integer vector $\vect{p} \in \Z^n$ and integer 
   vector $\vect{q}\in \Z^m$ such that $\|\vect{p}\| \leq \mu N^m$ and $\|\Phi \vect{p} -\vect{q}\| \leq \mu N^{-n}$. 
   Then the lattice $a(\log N)u(\Phi)\Z^{m+n}$ has a vector $a(\log N)u(\Phi)(-\vect{q}, \vect{p})$ whose norm is less than $\mu$, i.e.,
   $a(\log N) u(\Phi) \Z^{m+n} \not\in K_{\mu}$ for all $N>0$ large enough. Thus, to show that $\Phi \in \M(m\times n, \R)$ is not 
   $DT_{\mu}$-improvable,
   it suffices to show that the trajectory $\{a(t)u(\Phi)[e]: t >0\}$ meets $K_{\mu}$ infinitely many times. In particular, for an analytic curve 
   $$\varphi: I=[a,b]\rightarrow \M(m\times n, \R),$$ if we could show that for almost every point $\varphi(s)$ on the curve the trajectory 
   \begin{equation}\label{equ_ae_orbit_dense}\{a(t)u(\varphi(s))[e]: t >0\} \text{ is dense, } \end{equation}
   then we could conclude that almost every $\varphi(s)$ is not $DT$-improvable. In particular, if one can prove that the 
   expanding curves $a(t)u(\varphi(I))[e]$ tend to be equidistributed in $G/\Gamma$ as $t \rightarrow +\infty$, then (\ref{equ_ae_orbit_dense}) will follow (see \cite{Shah_2} for detailed proof).
   It turns out that the equidistribution result described above holds for a much more general setting. In fact, one could prove the following result:
\begin{theorem}
\label{goal_thm}
Let $G$ be a Lie group containing $H = \mathrm{SL}(m+n,\R)$, and $\Gamma < G$ be a lattice of $G$. Let $\mu_G$ denote the unique $G$-invariant probability 
 measure on the homogeneous space $G/\Gamma$. Take $x = g\Gamma \in G/\Gamma$ such that its $H$-orbit $Hx$ is dense in $G/\Gamma$. Let us fix the 
 diagonal group
 $$A = \left\{a(t) = \begin{bmatrix}e^{nt}\I_m & \\ & e^{-mt}\I_n\end{bmatrix}\right\}.$$
 Let $\varphi: I=[a,b]\rightarrow \M(m\times n, \R)$ be an analytic curve. We embed the curve into $H$ by 
 $$u: X \in \M(m\times n , \R) \mapsto u(X) = \begin{bmatrix}\I_m & X \\ & \I_n\end{bmatrix}.$$
 For $t >0$, let $\mu_t$ denote the normalized parameteric measure on the curve $a(t)u(\varphi(I))x \subset G/\Gamma$; that is, for a 
 compactly supported continuous function $f \in C_c(G/\Gamma)$,
 $$\int f \dd\mu_t := \frac{1}{\abs{I}}\int_{s\in I} f(a(t)u(\varphi(s))x)\dd s.$$
 \par If $\varphi$ is {\em generic\/}, then every weak-$\ast$ limit measure $\mu_{\infty}$ of $\{\mu_t: t >0\}$ is still a 
probability measure.
 If the curve $\varphi$ is {\em supergeneric\/}, 
 then $\mu_t \rightarrow \mu_G$ as $t \rightarrow +\infty$ in weak-$\ast$ topology; that is, for any function $f\in C_c(G/\Gamma)$,
 $$\lim_{t\rightarrow +\infty} \frac{1}{\abs{I}} \int_{s \in I} f(a(t)u(\varphi(s))x)\dd s = \int_{G/\Gamma} f \dd \mu_G.$$
Moreover, if $(m,n)=1$, then {\em generic\/} property will imply that $\mu_t \rightarrow \mu_G$ as $t \rightarrow +\infty$.
\end{theorem}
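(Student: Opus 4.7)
The proof follows the three-step scheme of homogeneous dynamics developed in \cite{Shah_2} and \cite{Yang_1}: (i) non-divergence of the parametric measures $\mu_t$, (ii) extraction of a unipotent invariance for any weak-$\ast$ limit $\mu_\infty$, and (iii) Ratner's measure classification combined with a Dani--Margulis linearization argument to exclude every proper algebraic limit. The main case is $m=n$, in which the \emph{supergeneric} hypothesis is phrased directly in terms of $\mathfrak{n}^{-}$ and $\mathcal{E}$; the rectangular case $m\ne n$ is reduced to a lower-rank instance of the same theorem via the auxiliary curve $\psi$ in Definition~\ref{def:geometric_condition} (for $m<n$) or by transposition (for $m>n$). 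The last clause, for $\gcd(m,n)=1$, follows from the \emph{supergeneric} case together with the remark that \emph{generic} already implies \emph{supergeneric} in the coprime setting.

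For step (i), the $(C,\alpha)$-good quantitative non-divergence technique of Kleinbock--Margulis and Kleinbock--Weiss applies: the coefficients of $a(t)u(\varphi(s))g$ in each rational representation of $G$ are real-analytic in $s$, and the \emph{generic} hypothesis ensures that the sup-norms of the relevant polynomial coefficient functions (after conjugation by $a(t)$) stay bounded below uniformly in $t$; this already delivers the first clause of the theorem. For step (ii), I work in the case $m=n$ and use the Bruhat identity, valid for invertible $\Phi$,
\[
u(\Phi)\;=\;u^{-}(\Phi^{-1})\,d(\Phi)\,w\,u^{-}(\Phi^{-1}),
\]
where $d(\Phi)=\mathrm{diag}(\Phi,\Phi^{-1})$ and $w$ is the block Weyl element swapping the two $m\times m$ blocks. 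Setting $\Phi_s:=\varphi(s)-\varphi(s_0)$ and combining this identity with $a(t)u^{-}(Y)a(-t)=u^{-}(e^{-2nt}Y)$ and the commutation $a(t)w=wa(-t)$ (both of which rely on $m=n$), I obtain the key formula
\[
a(t)u(\Phi_s)\;=\;u^{-}(e^{-2nt}\Phi_s^{-1})\cdot d(\Phi_s)\cdot w\cdot a(-t)\cdot u^{-}(\Phi_s^{-1}).
\]
The leftmost factor tends to the identity uniformly in $s$, and $d(\Phi_s)w$ depends smoothly and boundedly on $s\in J_{s_0}$. Consequently $\mu_t$ is asymptotically a bounded smooth translate of the pushforward of the curve $s\mapsto u^{-}(\Phi_s^{-1}-\Phi_{s_1}^{-1})$ stretched by $a(-t)$ in the expanding $U^{-}$-direction, for any reference point $s_1\in J_{s_0}$. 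A standard smoothing and integration-by-parts argument in the expanding direction then forces every weak-$\ast$ limit $\mu_\infty$ to be invariant under the one-parameter unipotent subgroup with Lie algebra $\R\,\mathfrak{n}^{-}\bigl(\tfrac{\dd}{\dd s}\Phi_s^{-1}|_{s=s_1}\bigr)$.

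For step (iii), Ratner's measure classification expresses $\mu_\infty$ as a convex combination of algebraic measures on closed orbits $Lx$ of subgroups $L\subset G$ containing the one-parameter unipotent just produced. Each obstructing $L$ is then detected by Dani--Margulis linearization through a representation $V$ of $H$ and a vector $v_L\in V$ whose $H$-stabilizer equals $L$, so $L$ is observable; since the mass concentration arises from the $A$-action on the initial data, the Lie algebra $\mathfrak{l}$ of $L$ must contain $\mathcal{E}$. Translating the Dani--Margulis obstruction back through the Bruhat identity, the charging of $Lx$ by a positive fraction of $\mu_\infty$ forces
\[
\bigl\{\mathfrak{n}^{-}\bigl(\Phi_{s_1}^{-1}-\Phi_{s_2}^{-1}\bigr):s_1,s_2\in J_{s_0}\bigr\}\subset\mathfrak{l},
\]
which is precisely what the \emph{supergeneric} condition \eqref{eq:supergeneric} forbids. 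Hence no proper $L$ can charge $\mu_\infty$, and $\mu_\infty=\mu_G$.

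The main technical obstacle is step (iii): matching the Dani--Margulis algebraic obstruction to the Lie-algebraic condition \eqref{eq:supergeneric} exactly. For each candidate observable $\mathfrak{l}\supset\R\mathcal{E}$, this requires a careful choice of representation $V$ and stabilized vector $v_L$, together with a sharp analysis of the $a(t)$-dynamics near the orbit $Lx$, so that $\mathfrak{n}^{-}(\Phi_s^{-1})$ emerges as the precise leading-order obstruction and the containment $\mathcal{E}\in\mathfrak{l}$ captures exactly the $A$-centralizer constraint arising from $a(t)$-translation of positive-mass orbits.
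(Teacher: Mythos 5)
Your three-step scheme (non-divergence, unipotent invariance, Ratner plus linearization) is the right skeleton and matches the paper's architecture, but the proposal has two serious gaps and one genuine difference worth noting.

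The genuine difference is in step (ii). You obtain unipotent invariance of $\mu_\infty$ via the Bruhat factorization $u(\Phi)=u^{-}(\Phi^{-1})d(\Phi)\,w\,u^{-}(\Phi^{-1})$, pushing the expansion into the $U^{-}$-direction through the Weyl element $w$ and the flip $a(t)w=wa(-t)$. The paper instead stays entirely in $U^{+}$: it renormalizes $\mu_t$ to $\lambda_t$ by pre-composing with an analytic curve $z(s)\in Z_H(A)$ that rotates the derivative $\varphi^{(1)}(s)$ to a fixed matrix $[\E_b(m);\vect{0}]$, so that every limit of $\lambda_t$ is invariant under the fixed $U^{+}$-unipotent $W=\{u(r[\E_b(m);\vect{0}])\}$. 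Your approach would also require such a $Z_H(A)$-normalization, since the tangent direction $\tfrac{\dd}{\dd s}\Phi_s^{-1}=-\Phi_s^{-1}\varphi^{(1)}(s)\Phi_s^{-1}$ varies with $s$, and you have not supplied it. More importantly, your Bruhat identity is available only when $m=n$ and $\Phi_s$ is square and invertible.

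The first serious gap is the rectangular case. You propose to reduce $m\ne n$ to a lower-rank instance of the theorem itself via $\psi$. This cannot work directly: the theorem for $(m,n)$ takes place on a fixed homogeneous space $G/\Gamma$ with $H=\SL(m+n,\R)$, and the auxiliary curve $\psi:J_{s_0}\to \M(m\times(n-m),\R)$ lives inside the smaller group $\SL(n,\R)\subsetneq H$; equidistribution of $\psi$-translates on a lower-rank homogeneous space does not imply equidistribution of $\varphi$-translates on $G/\Gamma$. In the paper the induction on $(m,n)$ happens only inside the purely representation-theoretic Basic Lemmas (Lemma \ref{basic_lemma_1} and Lemma \ref{basic_lemma_2}), which bound and then kill the Ratner/linearization obstruction for all $(m,n)$ simultaneously. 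Your proposal neither states nor proves these lemmas, and without them both your non-divergence step (why does genericness keep the coefficient sup-norms bounded below?) and the final step are unsubstantiated.

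The second serious gap is in step (iii), and you flag part of it yourself as ``the main technical obstacle.'' Your sketch conflates the subgroup $L\subset G$ from Ratner's theorem with the observable subalgebra $\mathfrak{l}\subset\mathfrak{sl}(2m,\R)$ appearing in the supergeneric condition. These are different objects: the Ratner $L$ is a subgroup of $G$ and its linearization vector $p_L\in\bigwedge^{\dim L}\mathfrak{g}$ is stabilized (as a line) by $N^1_G(L)$, not by an $H$-stabilizer; the observable subalgebra of $\mathfrak{sl}(2m,\R)$ arises only after one proves that the obstruction $\{u(\varphi(s))g\gamma p_L\}_{s\in I}\subset V^{-0}(A)$ forces $v^{0}(A)$ to be fixed by $A$ and by all $u^{-}(\Phi_{s_1}^{-1}-\Phi_{s_2}^{-1})$. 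That implication is exactly the content of Lemma \ref{basic_lemma_2}, which in turn rests on the $\SL(2,\R)$-dynamical Lemma \ref{yang_lemma} and the projection-to-$A_1,A_2$-weight-spaces inductive argument. Simply asserting the inclusion $\{\mathfrak{n}^{-}(\Phi_{s_1}^{-1}-\Phi_{s_2}^{-1})\}\subset\mathfrak{l}$ is circular without that lemma. In addition, even once the $H$-stabilizer of $v^0(A)$ is shown to be all of $H$, one still must pass from ``$H$ fixes $v=g\gamma p_L$'' to ``$L=G$'' using the density of $Hx$ (the argument in \S\ref{proof_equidistribution}); this final algebraic step is absent from your proposal.

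In short: the outline is right, and the Bruhat route to $U^{-}$-invariance is an interesting alternative for $m=n$, but the proposal omits the paper's central contributions (the two Basic Lemmas proved by induction on $(m,n)$), misstates how the $m\ne n$ case is handled, and leaves the crucial translation from the Ratner obstruction to the supergeneric condition unproven.
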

\begin{remark} 
$\quad$
\begin{enumerate}
\item To prove Theorem \ref{thm_diophantine_part}, we only need the above theorem with $G=H=\mathrm{SL}(n+m,\R)$, $\Gamma = \mathrm{SL}(m+n,\Z)$,
 and $x = [e] = \Z^{m+n} \in G/\Gamma$.
 \item Even in the case $G=H=\mathrm{SL}(m+n,\R)$, Theorem \ref{goal_thm} is still much stronger than Theorem 
 \ref{thm_diophantine_part}, since it applies for arbitrary lattice $\Gamma < G$.
 \end{enumerate}
\end{remark}
\par The study of limit distributions of evolution of curves translated by diagonalizable subgroups in homogeneous spaces has its own interest and has a lot of interesting connections to geometry and Diophantine approximation. One could summarize this type of problems as follows:
\begin{problem}
\label{problem_equidistribution}
Let $H$ be a semisimple Lie group, generated by its unipotent subgroups. Fix a diagonalizable one parameter subgroup $A =\{a(t): t \in \R\} \subset H$, and let $U^{+}(A)$ denote the expanding horospherical subgroup of $A$ in $H$. Let $G$ be a Lie group containing $H$, and let $\Gamma$ be a lattice of $G$.
\par Let
      $$\phi: I=[a,b] \rightarrow H$$
      be a piece of analytic curve in $H$ with nonzero projection on $U^{+}(A)$ (this will make sure that the translates of $\phi(I)$ by $\{a(t): t >0\}$ expand). Given a point $x = g\Gamma \in G/\Gamma$, Ratner's Theorem tells that the closure of $Hx$ is a finite volume homogeneous subspace $Fx$, where $F$ is a Lie subgroup of $G$ containing $H$. Let $\mu_F$ denote the unique probability $F$-invariant measure supported on $Fx$.
      One can ask whether the expanding curves $\{a(t)\phi(I)x: t >0 \}$ tend to be equidistributed in $Fx$, i.e., as $t \rightarrow +\infty$, the normalized parametric measure supported on $a(t)\phi(I)x$ approaches $\mu_F$ in weak-$\ast$ topology.  
\end{problem}
\begin{remark}
Without loss of generality, in this paper, we always assume that $Hx$ is dense in $G/\Gamma$. If $Hx$ is not dense, suppose its closure is $Fx$, then we may replace $G$ by $F$, $\Gamma$ by $F\cap x \Gamma x^{-1}$ (which is a lattice of $F$ by the closeness of $Fx$). 
\end{remark}
\par Nimish Shah \cite{Shah_1} and \cite{Shah_3} studied the case
$H =\mathrm{SO}(n,1)$ and $G=\mathrm{SO}(m,1)$ where $m\geq n$. In this case
the diagonalizable subgroup $A=\{a(t): t \in \R\}$ is a fixed maximal $\R$-split Cartan subgroup of $H$. In \cite{Shah_1} it is proved that given an analytic curve 
$$\phi: I=[a,b] \rightarrow H,$$
and a point $x = g\Gamma \in G/\Gamma$, unless the natural visual map 
$$\mathrm{Vis}: \mathrm{SO}(n,1)/\mathrm{SO}(n-1) \cong \T^1(\mathbb{H}^n) \rightarrow \partial \mathbb{H}^n \cong \mathbb{S}^{n-1}$$
sends the curve $\phi(I)$ to a proper subsphere of $\mathbb{S}^{n-1}$, the translates $a(t)\phi(I)x$ of $\phi(I)x$ will tend to be equidistributed as $t \to +\infty$. In \cite{Shah_3}, the same result is proved when $\phi$ is only $C^n$ differentiable. In \cite{Shah_1} and \cite{Shah_3}, the obstruction of equidistribution is discussed and possible limit measures are described when the equidistribution fails. This result was generalized by Yang \cite{Yang_2} in the following sense: for $H=\mathrm{SO}(n,1)$ and arbitrary Lie group $G$ containing $H$, if the same condition on the curve holds, then the expanding curve $a(t)\phi(I)x$ tends to be equidistributed as $t \to +\infty$. Shah \cite{Shah_2} studied the case $m=1$ of the problem we consider in this paper, and proved that if the analytic curve $\varphi: I \rightarrow \M(1\times n, \R) = \R^n$ is not contained in a proper affine subspace of $\R^n$, then the equidistribution holds. It turns out that this condition is the same as {\em generic\/} condition for $m=1$. Later Yang \cite{Yang_1} studied the case $m=n$.
\par When the {\em generic\/} condition holds but {\em supergeneric\/} condition does not, we want to understand the obstruction of equidistribution and describe the limit measures of $\{\mu_t: t >0\}$ to some extent. This requires more subtle argument. In \cite{Shah_1} and \cite{Shah_1}, obstruction of equiditribution and description of limit measures are clearly given unconditionally for the case $H=\mathrm{SO}(n,1)$ and $G = \mathrm{SO}(m,1)$ in the set up of Problem \ref{problem_equidistribution}. In our case, the problem becomes much more complicated. In this paper, we only discuss the case $n=km$, and we conjecture that similar result remains true for general $(m,n)$ such that $(m,n)>1$ (for the case $(m,n)=1$, {\em generic\/} is the same as {\em supergeneric\/}, so there is nothing in between).

%%%%%%%%%%%%%%%%%%%%%%%%%%%%%%%%%%%%%%%%%%%%%%%%%%%%%%%%%%%%%%%%%%%%%%%%%%%%%%%%%%%%%%%%%%
%%%%%%%%%%%%%%%%%%%%%%%%%%%%%%%%%%%%%%%%%%%%%%%%%%%%%%%%%%%%%%%%%%%%%%%%%%%%%%%%%%%%%%%%%%
%%%%%%%%%%%%%%%% EXTREMITY OF SUBMANIFOLDS  %%%%%%%%%%%%%%%%%%%%%%%%%%%%%%%%%%%%%%%%%%%%%%
%%%%%%%%%%%%%%%%%%%%%%%%%%%%%%%%%%%%%%%%%%%%%%%%%%%%%%%%%%%%%%%%%%%%%%%%%%%%%%%%%%%%%%%%%%
%%%%%%%%%%%%%%%%%%%%%%%%%%%%%%%%%%%%%%%%%%%%%%%%%%%%%%%%%%%%%%%%%%%%%%%%%%%%%%%%%%%%%%%%%%
\subsection{Relation to extremity of submanifolds in homogeneous spaces}
\par Another direction to study Diophantine properties of a real matrix $\Phi \in \M(m\times n, \R)$ is to determine whether $\Phi$ is 
very well approximable. We say $\Phi \in \M(m\times n ,\R)$ is very well approximable if there exists some constant $\delta>0$ such that there
exist infinitely many nonzero integer vectors $\vect{p} \in \Z^n$ and integer vectors $\vect{q}\in \Z^m$ such that
$$\norm{\Phi \vect{p} - \vect{q}} \leq \norm{\vect{p}}^{-n/m-\delta}.$$
A submanifold $\mathcal{U} \subset \M(m\times n,\R)$ is called extremal if with respect to the 
Lebesgue measure on $\mathcal{U}$, almost every point is not very well approximable. Based on the same correspondence due to Dani \cite{Dani} and due to Kleinbock and Margulis \cite{Klein_Mar}, this problem can also be studied
through homogenous dynamics. Kleinbock and Margulis \cite{Klein_Mar} proved that if a submanifold $\mathcal{U} \subset \M(1\times n ,\R)$ 
is nondegenerate, then $\mathcal{U}$ is extremal.  Kleinbock, Margulis and Wang \cite{Klein_Mar_Wang} later gave a necessary 
and sufficient condition of a submanifold of $\M(m\times n, \R)$ being extremal. The condition is stated in terms of a particular representation of $H = \SL(m+n, \R)$ and could not be translated to a geometric condition. Recently, Aka, Breuillard, Rosenzweig and de Saxc\'{e} \cite{ABRS} gave a family of 
subvarieties of $\M(m\times n, \R)$ called pencils, and announced a theorem stating that if a submanifold $\mathcal{U} \subset \M(m\times n , \R)$ is 
not contained in a pencil, then $\mathcal{U}$ is extremal. It turns out that the {\em generic\/} condition implies the condition given in \cite{ABRS}. We will discuss it in detail in Appendix 
\ref{App:AppendixA}.
%%%%%%%%%%%%%%%%%%%%%%%%%%%%%%%%%%%%%%%%%%%%%%%%%%%%%%%%%%%%%%%%%%%%%%%%%%%%%%%%%%%%%%%%%%
%%%%%%%%%%%%%%%%%%%%%%%%%%%%%%%%%%%%%%%%%%%%%%%%%%%%%%%%%%%%%%%%%%%%%%%%%%%%%%%%%%%%%%%%%%
%%%%%%%%%%%%% ORGANIZATION OF THE PAPER %%%%%%%%%%%%%%%%%%%%%%%%%%%%%%%%%%%%%%%%%%%%%%%%%%
%%%%%%%%%%%%%%%%%%%%%%%%%%%%%%%%%%%%%%%%%%%%%%%%%%%%%%%%%%%%%%%%%%%%%%%%%%%%%%%%%%%%%%%%%%
%%%%%%%%%%%%%%%%%%%%%%%%%%%%%%%%%%%%%%%%%%%%%%%%%%%%%%%%%%%%%%%%%%%%%%%%%%%%%%%%%%%%%%%%%%

\subsection{Organization of the paper}
\par The paper is organized as follows: In \S \ref{non-divergence}, assuming the {\em generic\/} condition on $\varphi$, we will relate a unipotent invariance to limit measures of $\{\mu_t: t>0\}$, and show that every limit measure is still a probability measure. This allows us to apply Ratner's theorem. In \S \ref{ratner_linearization} We will apply Ratner's theorem and the linearization technique to study the limit measure via a particular linear representation of $H$. Finally we will get a linear algebraic condition on $\varphi$. In \S \ref{lemmas}, we will recall and prove some basic lemmas on linear representations, which are essential in our proof. In \S \ref{proof_equidistribution}, assuming the {\em supergeneric\/} condition, we will give the proof of Theorem \ref{goal_thm}, as we have discussed before, Theorem \ref{thm_diophantine_part} will follow from Theorem
\ref{goal_thm}. In \S \ref{obstruction_equidistribution}, assuming the {\em generic\/} condition, we will study the obstruction of equidistribution and limit measures of $\{\mu_t: t >0\}$. We will only discuss the case $n=km$, and give a conjecture on general case. In the appendix, we will discuss the condition given in \cite{ABRS} and its relation to our {\em generic\/} condition.

\begin{notation}
 In this paper, we will use the following notation. \par For $\epsilon > 0$ small, 
 and quantities $A$ and $B$, $A \overset{\epsilon}{\approx} B$ means that 
 $\abs{A-B} \leq \epsilon$. Fix a right $G$-invariant metric $d(\cdot, \cdot)$ on $G$, then for $x_1, x_2 \in G/\Gamma$,
 and $\epsilon >0$, $x_1 \overset{\epsilon}{\approx} x_2$ means $x_2 =g x_1$ such that $d(g,e)< \epsilon$.
 \par For two related variable quantanties $A$ and $B$, $A \ll B$ means there exists a constant $C >0$ such that $A \leq CB$, and  
 $A \gg B$ means $B \ll A$. $O(A)$ denotes some quantity $\ll A$ or some vector whose norm is $\ll A$.
\end{notation}

%%%%%%%%%%%%%%%%%%%%%%%%%%%%%%%%%%%%%%%%%%%%%%%%%%%%%%%%%%%%%%%%%%%%%%%%%%%%%%%%%
%%%%%%%%%%%%%%%%%%%%%%%%%%%%%%%%%%%%%%%%%%%%%%%%%%%%%%%%%%%%%%%%%%%%%%%%%%%%%%%%%
%%%%%%%%%%%%% ACKNOWLEDGEMENTS %%%%%%%%%%%%%%%%%%%%%%%%%%%%%%%%%%%%%%%%%%%%%%%%%%
%%%%%%%%%%%%%%%%%%%%%%%%%%%%%%%%%%%%%%%%%%%%%%%%%%%%%%%%%%%%%%%%%%%%%%%%%%%%%%%%%
%%%%%%%%%%%%%%%%%%%%%%%%%%%%%%%%%%%%%%%%%%%%%%%%%%%%%%%%%%%%%%%%%%%%%%%%%%%%%%%%%

\subsection*{Acknowledgement} The second author thanks The Ohio State University for hospitality during his visit when the project was initiated. Both authors thank MSRI where they collaborated on this work in Spring 2015. The second author thanks Dmitry Kleinbock for helpful discussions on the generic condition and for drawing his attention to the work of Aka, Breuillard, Rosenzweig and de Saxc\'{e} \cite{ABRS}.

%%%%%%%%%%%%%%%%%%%%%%%%%%%%%%%%%%%%%%%%%%%%%%%%%%%%%%%%%%%%%%%%%%%%%%%%%%%%%%%%%
%%%%%%%%%%%%%%%%%%%%%%%%%%%%%%%%%%%%%%%%%%%%%%%%%%%%%%%%%%%%%%%%%%%%%%%%%%%%%%%%%
%%%%%%%%%%% NON-DIVERGENCE OF LIMIT MEASURES %%%%%%%%%%%%%%%%%%%%%%%%%%%%%%%%%%%%
%%%%%%%%%%%%%%%%%%%%%%%%%%%%%%%%%%%%%%%%%%%%%%%%%%%%%%%%%%%%%%%%%%%%%%%%%%%%%%%%%
%%%%%%%%%%%%%%%%%%%%%%%%%%%%%%%%%%%%%%%%%%%%%%%%%%%%%%%%%%%%%%%%%%%%%%%%%%%%%%%%%

\section{Non-divergence of the limit measures and unipotent invariance}
\label{non-divergence}
%%%%%%%%%%%%%%%%%%%%%%%%%%%%%%%%%%%%%%%%%%%%%%%%%%%%%%%%%%%%%%%%%%%%%%%%%%%%%%%%%
%%%%%%%%%%%%%%%%%%%%%%%%%%%%%%%%%%%%%%%%%%%%%%%%%%%%%%%%%%%%%%%%%%%%%%%%%%%%%%%%%
%%%%%%%%%%% Preliminaries on Lie group structure %%%%%%%%%%%%%%%%%%%%%%%%%%%%%%%%
%%%%%%%%%%%%%%%%%%%%%%%%%%%%%%%%%%%%%%%%%%%%%%%%%%%%%%%%%%%%%%%%%%%%%%%%%%%%%%%%%
%%%%%%%%%%%%%%%%%%%%%%%%%%%%%%%%%%%%%%%%%%%%%%%%%%%%%%%%%%%%%%%%%%%%%%%%%%%%%%%%%

\subsection{Preliminaries on Lie group structures}
We first recall some basic facts of the group $H= \mathrm{SL}(m+n,\R)$.
Without loss of generality, throughout this paper we always assume $m\leq n$.
\par 
The centralizer of the diagonal subgroup $A$, $Z_H(A)$, has the following form:
$$Z_H(A)= \left\{\begin{bmatrix}B & \\ & C\end{bmatrix}: B \in \mathrm{GL}(m,\R) , C \in \mathrm{GL}(n,\R), \text{ and } \det B \det C =1\right\}.$$
\par The expanding horospherical subgroup of $A$, $U^{+}(A)$ has the following form:
$$U^{+}(A) := \left\{ u(X) := \begin{bmatrix}\I_m & X \\ & \I_n \end{bmatrix}: X \in \M(m\times n, \R)\right\}.$$
Similarly, the contracting horospherical subgroup $U^{-}(A)$ has the following form:
\begin{equation} \label{U-}
U^{-}(A) := \left\{u^{-}(X) := \begin{bmatrix}\I_m & \\ X & \I_n\end{bmatrix}: X \in \M(n\times m, \R)\right\}.
\end{equation}
For any $z \in Z_H(A)$ and $u(X) \in U^{+}(A)$, $z u(X) z^{-1} = u(zX)$ where $z \cdot X$ is defined as follows:
\begin{equation} \label{eq:ZonM}
\text{if } z = \begin{bmatrix}B & \\ & C\end{bmatrix}\in Z_{H}(A) \text{ and }
X\in \M(m\times n, \R) \text{ then } z\cdot X := BXC^{-1}. 
\end{equation}
 This defines an action of 
$Z_{H}(A)$ on $\M(m\times n, \R)$.

Similarly we can define the action of $Z_H(A)$ on $\M(n\times m, \R)$ induced by the conjugate action of $Z_H(A)$ on $U^{-}(A)$.
\par Let $P^{-}(A) := Z_H(A)U^{-}(A)$ denote the maximal parabolic subgroup of $H$ associated with $A$. 

\begin{definition} \label{SL2X}
 For any $X \in \mathrm{GL}(m,\R)$, we consider the following three elements in the Lie algebra $\mathfrak{h}$ of $H$: 
$$\mathfrak{n}^{+}(X) := \begin{bmatrix} \vect{0} & X & \vect{0} \\ \vect{0} & \vect{0} & \vect{0} \\ \vect{0} & \vect{0} & \vect{0}\end{bmatrix}, \qquad
\mathfrak{n}^{-}(X\inv) : = \begin{bmatrix} \vect{0} & \vect{0} & \vect{0} \\ X^{-1} & \vect{0} & \vect{0} \\ \vect{0} & \vect{0} & \vect{0}\end{bmatrix}, \qquad 
\mathfrak{a} := \begin{bmatrix}\I_m & \vect{0} & \vect{0} \\ \vect{0} & -\I_m & \vect{0} \\\vect{0} & \vect{0} & \vect{0}\end{bmatrix}.$$
Then $\{\mathfrak{n}^{+}(X), \mathfrak{n}^{-}(X\inv), \mathfrak{a}\}$ makes a {\em $\mathfrak{sl}(2,\R)$- triple\/}; that is, they satisfy the following relations
\[
[\mathfrak{a},\mathfrak{n}^{+}(X)]=2\mathfrak{n}^{+}(X) \quad 
[\mathfrak{a},\mathfrak{n}^{-}(X\inv)]=-2\mathfrak{n}^{-}(X\inv),\quad
[\mathfrak{n}^{+}(X), \mathfrak{n}^{-}(X\inv)]=\mathfrak{a}.
\]
Therefore, there is an embedding 
of $\mathrm{SL}(2,\R)$ into $H$ that sends $\begin{bmatrix}1 & 1 \\ 0 & 1\end{bmatrix}$ to $\exp(\mathfrak{n}^{+}(X))$, 
$\begin{bmatrix}1 & 0 \\ 1 & 1\end{bmatrix}$ to $\exp(\mathfrak{n}^{-}(X\inv))$, and $\begin{bmatrix}e^t & 0 \\ 0 & e^{-t}\end{bmatrix}$ 
to $\exp(t \mathfrak{a})$. We denote the image of this $\mathrm{SL}(2,\R)$ embedding by $\mathrm{SL}(2,X) \subset H$. Let us denote
$$\sigma(X) := \begin{bmatrix} \vect{0} & X & \vect{0} \\ -X^{-1} & 
\vect{0} &\vect{0} \\ \vect{0} & \vect{0} & \vect{0}\end{bmatrix} \in \mathrm{SL}(2,X).$$
It is easy to see that $\sigma(X)$ corresponds to $\begin{bmatrix}0 & 1 \\ -1 & 0\end{bmatrix} \in \mathrm{SL}(2,\R)$.
\end{definition}

%%%%%%%%%%%%%%%%%%%%%%%%%%%%%%%%%%%%%%%%%%%%%%%%%%%%%%%%%%%%%%%%%%%%%%%%%%%%%%%%%%%%%%%%%%%%%%%%%%%%%%%%%%%
%%%%%%%%%%%%%%%%%%%%%%%%%%%%%%%%%%%%%%%%%%%%%%%%%%%%%%%%%%%%%%%%%%%%%%%%%%%%%%%%%%%%%%%%%%%%%%%%%%%%%%%%%%%
%%%%%%%%%%%%%%%%%%%%%%%%%%% Unipotent invariance %%%%%%%%%%%%%%%%%%%%%%%%%%%%%%%%%%%%%%%%%%%%%%%%%%%%%%%%%%
%%%%%%%%%%%%%%%%%%%%%%%%%%%%%%%%%%%%%%%%%%%%%%%%%%%%%%%%%%%%%%%%%%%%%%%%%%%%%%%%%%%%%%%%%%%%%%%%%%%%%%%%%%%
%%%%%%%%%%%%%%%%%%%%%%%%%%%%%%%%%%%%%%%%%%%%%%%%%%%%%%%%%%%%%%%%%%%%%%%%%%%%%%%%%%%%%%%%%%%%%%%%%%%%%%%%%%%

\subsection{Unipotent invariance}

\par Recall that for $t >0$, $\mu_t$ denotes the normalized parametric measure on the curve $a(t) u(\varphi(I))x$, and $\mu_G$ denotes the unique $G$ invariant probability measure on $G/\Gamma$. Our aim is to prove that $\mu_t \rightarrow \mu_G$ as $t \rightarrow +\infty$. We first modify the measures $\mu_t$ to another measure $\lambda_t$ and show that if $\lambda_t \rightarrow \mu_G$,
then $\mu_t \rightarrow \mu_G$ as well. Then we could study $\{\lambda_t: t >0\}$ instead. The motivation for this modification is that any accumulation point of $\{\lambda_t : t >0 \}$ is invariant under a unipotent subgroup.

%%%%%%%%%%%%%%%%%%%%%%%%%%%%%%%%%%%%%%%%%%%%%%%%%%%%%%%%%%%%%%%%%%%%%%%%%%%%%%%%%%%%%%%%%%%%%%%%%%%%%%%%%%%%%%
%%%%%%%%%%%%%%%%%%%%%%%%%%%%%%%%%%%%%%%%%%%%%%%%%%%%%%%%%%%%%%%%%%%%%%%%%%%%%%%%%%%%%%%%%%%%%%%%%%%%%%%%%%%%%%
%%%%%%%%%%%%%%%%% Definition : Modified Measure %%%%%%%%%%%%%%%%%%%%%%%%%%%%%%%%%%%%%%%%%%%%%%%%%%%%%%%%%%%%%%
%%%%%%%%%%%%%%%%%%%%%%%%%%%%%%%%%%%%%%%%%%%%%%%%%%%%%%%%%%%%%%%%%%%%%%%%%%%%%%%%%%%%%%%%%%%%%%%%%%%%%%%%%%%%%%
%%%%%%%%%%%%%%%%%%%%%%%%%%%%%%%%%%%%%%%%%%%%%%%%%%%%%%%%%%%%%%%%%%%%%%%%%%%%%%%%%%%%%%%%%%%%%%%%%%%%%%%%%%%%%%

\par The measure $\lambda_t$ is defined as follows:
\begin{definition}[cf.~{\cite[(5.2)]{Shah2010}}]
 Without loss of generality, we may assume that $\varphi^{(1)}(s) \neq \vect{0}$ for all $s\in I$. Since $\varphi$ is analytic, there exists some integer $1 \leq b \leq m$, such that the derivative $\varphi^{(1)}(s)$ has rank $b$ for all $s \in I$ but finitely many points. Let $\E_b(m)$ be the $m$ by $m$ matrix defined as follows:
 $$\E_b(m) := \left\{ \begin{array}{cl}\begin{bmatrix} \I_b & \vect{0} \\ \vect{0} & \vect{0} \end{bmatrix} & \text{ if } b < m , \\ & \\ \I_m &  \text{ if } b = m.\end{array}\right.$$ 
 Given a closed subinterval $J \subset I$ such that $\varphi^{(1)}(s)$ has rank $b$ for all $s \in J$,
 we define an analytic curve $z: J \rightarrow Z_H(A)$ such that $z(s)\cdot\varphi^{(1)}(s) = [\E_b(m); \vect{0}]$ for all $s \in J$. For $t >0$, we define 
 $\lambda^{J}_t$ to be the normalized parametric measure on $\{z(s)a(t)u(\varphi(s))x: s \in J\}$; that is, for $f\in C_c(G/\Gamma)$,
 \begin{equation} \label{eq:lambda}
 \int f \dd \lambda^{J}_t : = \frac{1}{\abs{J}}\int_{s\in J} f(z(s)a(t)u(\varphi(s))x)\dd s.
 \end{equation}
\end{definition}

%%%%%%%%%%%%%%%%%%%%%%%%%%%%%%%%%%%%%%%%%%%%%%%%%%%%%%%%%%%%%%%%%%%%%%%%%%%%%%%%%%%%%%%%%%%%%%%%%%%%%%%%%%%%%
%%%%%%%%%%%%%%%%%%%%%%%%%%%%%%%%%%%%%%%%%%%%%%%%%%%%%%%%%%%%%%%%%%%%%%%%%%%%%%%%%%%%%%%%%%%%%%%%%%%%%%%%%%%%%
%%%%%%%%%%%%%%%%%%%%% End Definition : Modified Measure %%%%%%%%%%%%%%%%%%%%%%%%%%%%%%%%%%%%%%%%%%%%%%%%%%%%%
%%%%%%%%%%%%%%%%%%%%%%%%%%%%%%%%%%%%%%%%%%%%%%%%%%%%%%%%%%%%%%%%%%%%%%%%%%%%%%%%%%%%%%%%%%%%%%%%%%%%%%%%%%%%%
%%%%%%%%%%%%%%%%%%%%%%%%%%%%%%%%%%%%%%%%%%%%%%%%%%%%%%%%%%%%%%%%%%%%%%%%%%%%%%%%%%%%%%%%%%%%%%%%%%%%%%%%%%%%%

\begin{remark}
 For any subinterval $J\subset I$, we could similarly define $\mu^{J}_t$ to be the normalized parameter measure on 
  $a(t)u(\varphi(J))x$.
 \end{remark}
 \begin{proposition}
 Suppose that for any closed subinterval $J\subset I$ such that $\lambda^{J}_t$ is defined, we have $\lambda^{J}_t \rightarrow \mu_G$ as $t \rightarrow +\infty$.
 Then $\mu_t \rightarrow \mu_G$ as $t \rightarrow +\infty$.
\end{proposition}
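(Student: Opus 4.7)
The plan is to compare $\mu_t$ with the modified measures $\lambda_t^J$ on suitable subintervals, exploiting the fact that the correcting element $z(s)\in Z_H(A)\subset G$ varies continuously in $s$ on each subinterval and that $\mu_G$ is $G$-invariant.

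First, since $\varphi$ is analytic and $\varphi^{(1)}$ has rank $b$ except at finitely many points, given $\epsilon>0$ one can choose finitely many closed subintervals $J_1,\dots,J_N$ of $I$ on each of which $\lambda_t^{J_i}$ is defined and whose union exhausts $I$ up to a set of Lebesgue measure $<\epsilon|I|$. For any $f\in C_c(G/\Gamma)$ with $\|f\|_\infty\le M$, the parts of $\mu_t$ coming from the excluded set contribute at most $\epsilon M$ uniformly in $t$. Therefore it suffices to prove that for each such $J=J_i$,
\[
\int f\,d\mu_t^{J}\longrightarrow \int f\,d\mu_G \qquad \text{as } t\to+\infty,
\]
where $\mu_t^J$ is the normalized parametric measure on $\{a(t)u(\varphi(s))x:s\in J\}$.

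The key observation is that for each $s\in J$,
\[
a(t)u(\varphi(s))x = z(s)^{-1}\bigl(z(s)a(t)u(\varphi(s))x\bigr),
\]
so $\mu_t^J$ differs from $\lambda_t^J$ only by the $s$-dependent left-translation by $z(s)^{-1}\in G$. Fix $\epsilon>0$ and use uniform continuity of $f$ (which is compactly supported, hence uniformly continuous in the right $G$-invariant metric $d(\cdot,\cdot)$) to choose $\delta>0$ so that $d(g,e)<\delta$ implies $|f(gy)-f(y)|<\epsilon$ for all $y\in G/\Gamma$. Since $s\mapsto z(s)$ is continuous on the compact interval $J$, partition $J=\bigsqcup_{k=1}^{K} J^{(k)}$ into finitely many subintervals so small that $d(z(s)z(s')^{-1},e)<\delta$ whenever $s,s'$ lie in the same $J^{(k)}$. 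Pick a representative $s_k\in J^{(k)}$ and write $z_k=z(s_k)$.

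Then on each $J^{(k)}$ we have, uniformly in $s\in J^{(k)}$ and in $t$,
\[
f\bigl(a(t)u(\varphi(s))x\bigr) = f\bigl(z(s)^{-1}\cdot z(s)a(t)u(\varphi(s))x\bigr)
\overset{\epsilon}{\approx} f\bigl(z_k^{-1}\cdot z(s)a(t)u(\varphi(s))x\bigr),
\]
and therefore
\[
\int_{J^{(k)}} f(a(t)u(\varphi(s))x)\,ds \overset{\epsilon|J^{(k)}|}{\approx} \int_{J^{(k)}} (f\circ L_{z_k^{-1}})\bigl(z(s)a(t)u(\varphi(s))x\bigr)\,ds,
\]
where $L_g$ denotes left translation by $g$. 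The right-hand integral equals $|J^{(k)}|\int(f\circ L_{z_k^{-1}})\,d\lambda_t^{J^{(k)}}$. Summing over $k$ and dividing by $|J|$,
\[
\int f\,d\mu_t^J \overset{\epsilon}{\approx} \sum_{k=1}^{K}\frac{|J^{(k)}|}{|J|}\int (f\circ L_{z_k^{-1}})\,d\lambda_t^{J^{(k)}}.
\]
By hypothesis, $\lambda_t^{J^{(k)}}\to \mu_G$ as $t\to+\infty$ for each $k$; since $f\circ L_{z_k^{-1}}\in C_c(G/\Gamma)$ and $\mu_G$ is $G$-invariant,
\[
\int (f\circ L_{z_k^{-1}})\,d\lambda_t^{J^{(k)}} \longrightarrow \int f\circ L_{z_k^{-1}}\,d\mu_G = \int f\,d\mu_G.
\]
Letting $t\to+\infty$ and then $\epsilon\to 0$ (which is allowed since the partitions can be refined and the excluded set shrunk) yields $\int f\,d\mu_t^J\to\int f\,d\mu_G$, and combining over $J_1,\dots,J_N$ gives $\mu_t\to\mu_G$. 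The only mild technical point — which I expect to be the main (minor) obstacle — is the bookkeeping that ensures the number $K$ of subintervals needed depends only on $\epsilon$ and the modulus of continuity of $z(\cdot)$ on $J$, and not on $t$, so that the error estimates are uniform in $t$; this follows from the analyticity (hence uniform continuity on compact subintervals) of $s\mapsto z(s)$.
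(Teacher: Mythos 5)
Your proof is correct and follows essentially the same strategy as the paper's: discard small neighborhoods of the finitely many bad points, partition the remainder into subintervals where $z(\cdot)$ is nearly constant, use uniform continuity of $f$ to replace the $s$-dependent translation $z(s)^{-1}$ by a fixed representative, and then invoke the hypothesis together with the $G$-invariance of $\mu_G$. The bookkeeping concern you flag at the end is real but is resolved exactly as you say, via uniform continuity of $z(\cdot)$ on compact subintervals.
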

\begin{proof}
\par Let $s_1, s_2, \dots , s_l \in I$ be all the points where $\varphi^{(1)}(s)$ does not have rank $b$. For any fixed $f \in C_c(G/\Gamma)$ and $\epsilon >0$, we want to show that for $t >0$ large enough,
$$\int f \dd \mu_t \overset{4\epsilon}{\approx} \int_{G/\Gamma} f \dd \mu_G .$$
For each $i \in \{1,2,\dots, l\}$, one could choose a small open subinterval $B_i \subset I$ containing $s_i$ such that 
\begin{equation}
\label{eq:Bi-1}
\left|(\sum_{i=1}^l | B _i | ) \int_{G/\Gamma} f \dd \mu_G\right| \leq \epsilon |I|,
\end{equation}
and for any $t >0$,
\begin{equation}
\label{eq:Bi-2}
\left| \int_{\cup_{i=1}^l B_i} f(a(t)u(\varphi(s))x) \dd s  \right| \leq \epsilon |I|.
\end{equation}
Since $f$ is uniformly continuous, there exists a constant $\delta>0$, such that if $x_1 \overset{\delta}{\approx} x_2$ then $f(x_1) \overset{\epsilon}{\approx} f(x_2)$. We cut $I\setminus \cup_{i=1}^l B_i$ into several small closed subintervals $J_1, J_2, \dots, J_p$ such that for every $J_r$, $z^{-1}(s_1)z(s_2) \overset{\delta}{\approx} e$ for any $s_1,s_2 \in J_r$. 
 \par Now for a fixed $J_r$, we choose $s_0 \in J_r$ and define $f_0(x)= f(z^{-1}(s_0)x)$. Then for any $s \in J_r$, because 
 $z^{-1}(s_0)z(s)a(t)u(\varphi(s))x \overset{\delta}{\approx} a(t)u(\varphi(s))x$, we have
 $$f_0(z(s)a(t)u(\varphi(s))x) = f(z^{-1}(s_0)z(s)a(t)u(\varphi(s))x) \overset{\epsilon}{\approx} f(a(t)u(\varphi(s))x).$$
 Therefore 
 $$\int f_0 \dd \lambda^{J_r}_t \overset{\epsilon}{\approx} \int f \dd \mu^{J_r}_t.$$
 Because $\int f_0 \dd \lambda^{J_r}_t \rightarrow \int_{G/\Gamma} f_0(x) \dd\mu_G(x)$ as $t \rightarrow +\infty$, and 
 $\int_{G/\Gamma} f_0(x) \dd\mu_G(x) = \int_{G/\Gamma}f(z^{-1}(s_0)x) \dd \mu_{G}(x) = \int_{G/\Gamma} f(x) \dd\mu_G$ (because $\mu_G$ is
 $G$-invariant), we have that there exists a constant $T_r>0$, such that for $t> T_r$, 
 $$\int f_0 \dd\lambda^{J_r}_t \overset{\epsilon}{\approx} \int_{G/\Gamma} f \dd\mu_G.$$
 Therefore, for $t > T_r$,
 $$\int f \dd\mu^{J_r}_t \overset{2\epsilon}{\approx} \int_{G/\Gamma} f \dd\mu_G,$$
 i.e.,
 $$\int_{J_r} f(a(t)u(\varphi(s))x) \dd s \overset{2\epsilon |J_r|}{\approx} |J_r| \int_{G/\Gamma} f \dd \mu_G. $$
 Then for $t > \max_{1\leq r\leq p} T_r$, we could sum up the above approximations for $r=1,2,\dots , p$ and get 
 $$\int_{I\setminus \cup_{i=1}^l B_i} f(a(t)u(\varphi(s))x) \dd s \overset{2\epsilon|I|}{\approx} (|I| -\sum_{i=1}^l |B_i|) \int_{G/\Gamma} f \dd \mu_G.$$
 Combined with (\ref{eq:Bi-1}) and (\ref{eq:Bi-2}), the above approximation implies that 
 $$\int_I f(a(t)u(\varphi(s))x) \dd s \overset{4\epsilon |I|}{\approx} |I| \int_{G/\Gamma} f \dd \mu_G,$$
 which is equivalent to 
$$\int f \dd \mu_t \overset{4\epsilon}{\approx} \int_{G/\Gamma} f \dd \mu_G . $$
 Because $\epsilon>0$ can be arbitrarily small, we complete the proof.
\end{proof}
\par By this proposition, if we could prove the equidistribution of $\{\lambda_t := \lambda_t^I : t >0\}$ as $t \rightarrow +\infty$ assuming that $\varphi^{(1)}(s)$ has rank $b$ for all $s \in I$, then the equidistribution of $\{\mu_t : t >0\}$ as $t \rightarrow +\infty$ will follow. Therefore, later in this paper, we will assume that $\varphi^{(1)}(s)$ has rank $b$ for all $s \in I$ and define $\lambda_t$ to be the normalised parametric measure on the curve $\{z(s)a(t)u(\varphi(s))x: s \in I\}$.
\par We will show that any limit measure of $\{\lambda_t : t >0\}$ is 
invariant under the unipotent subgroup 
\begin{equation} \label{eq:W}
W:= \{u(r[\E_b(m);\vect{0}]): r \in \R\}.
\end{equation}

\begin{proposition}[See \cite{Shah_1}]
 \label{prop_invariant_under_unipotent}
 Let $t_i \rightarrow +\infty$ be a sequence such that $\lambda_{t_i} \rightarrow \mu_{\infty}$ in weak-$\ast$ topology, then 
 $\mu_{\infty}$ is invariant under $W$-action.
\end{proposition}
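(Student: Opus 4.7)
The plan is to show that, up to an error vanishing as $t \to \infty$, the parameter shift $s \mapsto s + re^{-(n+m)t}$ on the curve implements left-translation by the element $w(r) := u(r[\E_b(m); \vect{0}]) \in W$ on each point $z(s)a(t)u(\varphi(s))x$. Once this matching is established, the fact that $\lambda_t$ is a parametric measure (hence essentially invariant under small parameter translations) will force every weak-$\ast$ limit $\mu_\infty$ to be $W$-invariant by a standard change-of-variables argument, in the spirit of \cite{Shah_1}.

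First I would fix $r \in \R$ and set $h_t := re^{-(n+m)t}$. Using the analytic expansion $\varphi(s+h_t) = \varphi(s) + h_t\varphi^{(1)}(s) + O(h_t^2)$ together with the fact that $u$ is a homomorphism from the additive group $\M(m\times n,\R)$ into $U^{+}(A)$, I would factor $u(\varphi(s+h_t)) = u(\varphi(s))\,u(h_t\varphi^{(1)}(s) + O(h_t^2))$. Then, combining the conjugation rule $a(t)u(Y)a(-t) = u(e^{(n+m)t}Y)$, the identity $z(s)u(Y)z(s)\inv = u(z(s)\cdot Y)$, and the defining property $z(s)\cdot\varphi^{(1)}(s) = [\E_b(m); \vect{0}]$, a short direct computation yields
\[
z(s+h_t)\,a(t)\,u(\varphi(s+h_t))\,x \;=\; g_t(s)\, w(r)\, z(s)\,a(t)\,u(\varphi(s))\,x,
\]
with $g_t(s) \to e$ uniformly in $s \in I$. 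The key cancellation is that the expanding factor $e^{(n+m)t}$ produced by conjugation through $a(t)$ exactly compensates the contracting factor $h_t = re^{-(n+m)t}$ built into the parameter shift.

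To conclude $W$-invariance, I would test against an arbitrary $f \in C_c(G/\Gamma)$. Uniform continuity of $f$, combined with the uniform convergence $g_t(s) \to e$, gives for all sufficiently large $t$ that $\int f(w(r)y)\,\dd\lambda_t(y)$ is close to $\tfrac{1}{\abs{I}}\int_I f\bigl(z(s+h_t)a(t)u(\varphi(s+h_t))x\bigr)\,\dd s$, which in turn differs from $\int f\,\dd\lambda_t$ by only a boundary contribution of order $\abs{h_t}\,\norm{f}_\infty/\abs{I} \to 0$ after the change of variables $s' = s + h_t$. Passing to the weak-$\ast$ limit along $t_i \to \infty$ then yields $\int f(w(r)y)\,\dd\mu_\infty(y) = \int f\,\dd\mu_\infty$ for every $r \in \R$ and every $f$, which is the asserted $W$-invariance.

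The main (and rather mild) technical step will be verifying that each residual piece of $g_t(s)$ -- the factor $z(s+h_t)z(s)\inv = e + O(h_t)$ and the unipotent correction $u\bigl(z(s)\cdot O(h_t^2 e^{(n+m)t})\bigr) = u(O(e^{-(n+m)t}))$ -- tends to the identity uniformly on the compact interval $I$. This relies only on compactness of $I$, analyticity of $\varphi$ and $z$, and boundedness of the $Z_H(A)$-action on the bounded set $z(I)$; no deeper obstacle appears, which is why this invariance can be established before any Ratner or linearization input is brought in.
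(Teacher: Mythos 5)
Your proof proposal follows essentially the same route as the paper's: expand $\varphi(s+re^{-(m+n)t})$ to first order, push the increment through $a(t)$ to cancel the contracting factor, use the defining property $z(s)\cdot\varphi^{(1)}(s)=[\E_b(m);\vect{0}]$ to turn $u(r\varphi^{(1)}(s))$ into $w(r)$ after conjugating by $z(s)$, absorb the $z(s+h_t)z(s)\inv$ and $u(O(e^{-(m+n)t}))$ corrections into an error tending uniformly to the identity, and finish with uniform continuity of $f$ plus a change of variables whose boundary contribution vanishes. This is correct and matches the paper's argument; the only cosmetic difference is that you package the error into a single factor $g_t(s)$ rather than peeling off the two corrections in two steps, and you left implicit the minor point (spelled out in the paper) that for $t$ large one first extends $z$ and $\varphi$ analytically past $[a,b]$ so that the shifted arguments make sense.
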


\begin{proof}
Given any $f\in C_c(G/\Gamma)$, and $r \in \R$, we want to show that 
$$\int f(u(r [ \E_b(m) ; \vect{0}])x) \dd \mu_{\infty} = \int f(x) \dd \mu_{\infty}.$$
Since $z(s)$ and $\varphi(s)$ are analytic and defined on the closed interval $I=[a,b]$, there exists a constant $T_1 >0$ such that for $t \geq T_1$, $z(s)$ and $\varphi(s)$ can be extended to analytic curves defined on $[a - |r| e^{-(m+n)t} , b +|r| e^{-(m+n)t}]$. Throughout the proof, we always assume that $t_i \geq T_1$. Then $z(s + r e^{-(m+n)t_i})$ and $\varphi(s + r e^{-(m+n)t_i})$ are both well defined for all $s \in I$.

\par From the definition of $\mu_{\infty}$, we have 
$$
\int f(u(r[\E_b(m);\vect{0}])x)\dd\mu_{\infty}  =  \lim_{t_i \rightarrow +\infty} \frac{1}{\abs{I}}\int_{s \in I} f(u(r[\E_b(m);\vect{0}])z(s)a(t_i)u(\varphi(s))x) \dd s.
$$
We want to show that 
\begin{equation} \label{eq:invariant}
u(r[\E_b(m) ;\vect{0}])z(s)a(t_i)u(\varphi(s)) \approx z(s+r e^{-(m+n)t_i})a(t_i)u(\varphi(s+r e^{-(m+n)t_i})).
\end{equation}
Since $z(s+r e^{-(m+n)t_i}) \approx z(s)$ for $t_i$ large enough, it suffices to show that 
$$u(r[\E_b(m);\vect{0}])z(s)a(t_i)u(\varphi(s)) \approx z(s)a(t_i)u(\varphi(s+r e^{-(m+n)t_i})).$$
In fact,
$$\begin{array}{cl}
   & z(s)a(t_i)u(\varphi(s+r e^{-(m+n)t_i})) \\
   = & z(s) a(t_i)u(\varphi(s) + r e^{-(m+n)t_i} \varphi'(s) + O(e^{-2(m+n)t_i})) \\
   = & z(s) u(O(e^{-(m+n)t_i})) u(r \varphi'(s))a(t_i) u(\varphi(s)).
  \end{array}
$$
By the definition of $z(s)$, we have the above is equal to
$$ u(O(e^{-(m+n)t_i}))u(r [\E_b(m);\vect{0}])z(s)a(t_i)u(\varphi(s)).$$
When $t_i$ is large enough, $u(O(e^{-(m+n)t_i}))$ can be ignored. Therefore, for any $\delta >0$, there exists 
$T>0$, such that for $t_i > T$, 
$$u(r[\E_b(m);\vect{0}])z(s)a(t_i)u(\varphi(s)) \overset{\delta}{\approx} z(s+ r e^{-(m+n)t_i})a(t_i)u(\varphi(s+r e^{-(m+n)t_i})).$$
Now for any $\epsilon >0$, we choose $\delta>0$ such that whenever $x_1 \overset{\delta}{\approx} x_2$, we have 
$f(x_1) \overset{\epsilon}{\approx} f(x_2)$. Then from the above argument, we have for $t_i > T$,
$$f(u(r [\E_b(m);\vect{0}]) z(s) a(t_i)u(\varphi(s))x) \overset{\epsilon}{\approx} f(z(s+ r e^{-(m+n)t_i}) a(t_i) u(\varphi(s+ r e^{-(m+n)t_i}))x).$$
Therefore,
$$\begin{array}{cl} & \frac{1}{\abs{I}}\int_{s\in I} f(u(r [\E_b(m); \vect{0}]) z(s) a(t_i)u(\varphi(s))x)\dd s \\
                  \overset{\epsilon}{\approx} & \frac{1}{\abs{I}} \int_{s\in I} f(z(s+ r e^{-(m+n)t_i}) a(t_i) u(\varphi(s+ r e^{-(m+n)t_i}))x) \dd s \\
                  = & \frac{1}{\abs{I}} \int_{a+ r e^{-(m+n)t_i}}^{b+ r e^{-(m+n)t_i}} f(z(s)a(t_i)u(\varphi(s))x) \dd s.
\end{array}$$
It is easy to see that when $t_i>0$ is large enough,
$$\frac{1}{\abs{I}} \int_{a+ r e^{-(m+n)t_i}}^{b+ r e^{-(m+n)t_i}} f(z(s)a(t_i)u(\varphi(s))x) \dd s \overset{\epsilon}{\approx} \frac{1}{\abs{I}} \int_{a}^{b} f(z(s)a(t_i)u(\varphi(s))x) \dd s.$$
Therefore, for $t_i$ large enough,
$$\int f(u(r[ \E_b(m);\vect{0}])x) \dd\lambda_{t_i} \overset{2\epsilon}{\approx} \int f(x) \dd \lambda_{t_i}.$$
Letting $t_i \rightarrow +\infty$, we have 
$$\int f(u(r [\E_b(m);\vect{0}])x) \dd \mu_{\infty} \overset{2\epsilon}{\approx} \int f(x) \dd \mu_{\infty}.$$
Since the above approximation is true for arbitrary $\epsilon >0$, we have that 
$\mu_{\infty}$ is $W$-invariant.
\end{proof}

%%%%%%%%%%%%%%%%%%%%%%%%%%%%%%%%%%%%%%%%%%%%%%%%%%%%%%%%%%%%%%%%%%%%%%%%%%%%%%%%%%%%%%
%%%%%%%%%%%%%%%%%%%%%%%%%%%%%%%%%%%%%%%%%%%%%%%%%%%%%%%%%%%%%%%%%%%%%%%%%%%%%%%%%%%%%%
%%%%%%%%%%%%%%%% NON-DIVERGENCE OF LIMIT MEASURES %%%%%%%%%%%%%%%%%%%%%%%%%%%%%%%%%%%%
%%%%%%%%%%%%%%%%%%%%%%%%%%%%%%%%%%%%%%%%%%%%%%%%%%%%%%%%%%%%%%%%%%%%%%%%%%%%%%%%%%%%%%
%%%%%%%%%%%%%%%%%%%%%%%%%%%%%%%%%%%%%%%%%%%%%%%%%%%%%%%%%%%%%%%%%%%%%%%%%%%%%%%%%%%%%%

\subsection{Non-divergence of limit measures}
\par We also need to show that any limit measure $\mu_{\infty}$ of $\{\lambda_t: t>0\}$ is still a probability measure 
of $G/\Gamma$, i.e., no mass escapes to infinity as $t \rightarrow +\infty$. To do this, it suffices to show the following proposition:
\begin{proposition}
\label{prop_no_escape_mass}
 For any $\epsilon >0$, there exists a compact subset $\mathcal{K}_{\epsilon} \subset G/\Gamma$ such that 
 $\lambda_t(\mathcal{K}_{\epsilon}) \geq 1-\epsilon$ for all $t >0$.
\end{proposition}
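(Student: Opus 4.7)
The plan is to deduce the result from the Kleinbock--Margulis non-divergence theorem, together with its extensions to analytic trajectories, applied to the family $s\mapsto a(t)u(\varphi(s))x$ parametrized by $t>0$. A preliminary reduction removes the factor $z(s)$: since $\varphi^{(1)}$ has constant rank $b$ on the compact interval $I$, the curve $z:I\to Z_H(A)$ is analytic, so $z(I)$ is compact in $Z_H(A)$. Hence if a compact set $\mathcal{K}\subset G/\Gamma$ witnesses uniform non-divergence for the unmodified measures $\mu_t^I$ on $\{a(t)u(\varphi(s))x:s\in I\}$, then $z(I)\cdot\mathcal{K}$ is compact and does the same job for $\lambda_t$. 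This reduces the problem to showing that for every $\epsilon>0$ there is a compact $\mathcal{K}_\epsilon\subset G/\Gamma$ with $\mu_t^I(\mathcal{K}_\epsilon)\ge 1-\epsilon$ for all $t>0$.

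By the Dani--Margulis description of the cusps of $G/\Gamma$, it is then enough to establish, for each $\delta>0$, a bound of the form
\[
\abs{\{s\in I:\norm{\rho_i(a(t)u(\varphi(s))g)\,v}<\delta\}}\ \le\ C\,\delta^{\alpha}\,\abs{I},
\]
uniformly in $t>0$ and in $v$ running over primitive integral vectors in a finite collection of $\Gamma$-rational representations $\rho_i:G\to\GL(V_i)$ that parametrize the cusps. I verify this via the $(C,\alpha)$-good method. Decomposing $V_i$ into weight spaces for $A$ gives
\[
\norm{\rho_i(a(t)u(\varphi(s))g)v}^2=\sum_\chi e^{2\chi(a(t))}\norm{P_\chi(\rho_i(u(\varphi(s))g)v)}^2,
\]
a finite positive combination of squared analytic functions of $s$ whose weight-space coefficient functions do not depend on $t$. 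Each such coordinate function is $(C,\alpha)$-good on the compact interval $I$ with constants depending only on $\rho_i$, $g$, and $\varphi$; a positive combination then inherits the $(C,\alpha)$-good property with constants independent of $t$.

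The final ingredient is a uniform positive lower bound on $\sup_{s\in I}\norm{\rho_i(a(t)u(\varphi(s))g)v}$. This is exactly where the \emph{generic} hypothesis enters: it guarantees that the analytic map $s\mapsto \rho_i(u(\varphi(s))g)v$ does not lie entirely in the sum of the non-expanding weight spaces of $A$ in $V_i$, so its projection onto some expanding weight space $V_i^\chi$ is a nonzero analytic function of $s$; its supremum on $I$ is then a positive constant, and multiplying by $e^{\chi(a(t))}\ge 1$ (for $t\ge 0$) preserves that lower bound. The main obstacle will be verifying this nonvanishing uniformly for every primitive $v$ and every relevant representation $\rho_i$; this is an algebraic consequence of genericness combined with analyticity, carried out in the spirit of the non-divergence steps of \cite{Shah_2} and \cite{Yang_1}. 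With the two ingredients in hand, summing the small-measure estimates over the finite family $\{\rho_i\}$ of cusp-defining representations produces a compact $\mathcal{K}_\epsilon$ for which $\mu_t^I(\mathcal{K}_\epsilon)\ge 1-\epsilon$ uniformly in $t>0$, which combined with the first reduction finishes the proof.
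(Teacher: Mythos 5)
Your proposal follows essentially the same route the paper does: run the Kleinbock--Margulis quantitative non-divergence machinery on the analytic family $s\mapsto a(t)u(\varphi(s))g$, and feed it a lower bound on $\sup_{s\in I}$ of the relevant norm, with genericness supplying that lower bound. Your preliminary reduction (removing the $z(s)$ factor using compactness of $z(I)$ in $Z_H(A)$) is a nice explicit rendering of a step the paper leaves implicit. The paper packages the quantitative non-divergence input as a clean dichotomy --- Theorem~\ref{non_divergence_theorem}: for each $t$, either the measure estimate holds, or there is a specific $\gamma\in\Gamma$ and $v_j$ with $\sup_{s\in I}\norm{a(t)u(\varphi(s))g\gamma v_j}$ small --- rather than re-deriving the $(C,\alpha)$-good estimates from scratch as you sketch; this only changes the bookkeeping, not the substance.

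Where your sketch has a genuine gap is the ``final ingredient,'' and unfortunately that is where the entire content of the proposition lives. You ask for a positive lower bound on $\sup_{s\in I}\norm{\rho_i(a(t)u(\varphi(s))g)v}$ that holds \emph{uniformly over every primitive integral $v$}. As stated this is not quite the right claim: for a single fixed $w$, analyticity and Lemma~\ref{basic_lemma_1} (the Basic Lemma) do give $\sup_{s}\norm{(u(\varphi(s))w)^{+0}(A)}>0$, but as $w$ ranges over the infinite discrete orbit $\Gamma v_j$ there is no a priori reason this supremum stays bounded away from zero. The paper's proof closes this by a normalization-and-limit dichotomy: along the bad sequence $t_i$, by discreteness of $\Gamma v_j$ either $\gamma_i v_j$ is eventually constant (then Lemma~\ref{basic_lemma_1} applies to the single vector $g\gamma v_j$ and contradicts decay to $0$), or $\norm{\gamma_i v_j}\to\infty$, in which case one normalizes, extracts a unit limit vector $v$, applies Lemma~\ref{basic_lemma_1} to $v$, and transfers the resulting uniform lower bound back along the sequence to force $\norm{a(t_i)u(\varphi(s))g\gamma_i v_j}\to\infty$, contradicting the smallness assumption. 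Your proposal gestures at this step (``the main obstacle \ldots carried out in the spirit of \cite{Shah_2} and \cite{Yang_1}'') but supplies neither the Basic Lemma nor the discreteness/normalization argument, and these two together are precisely where genericness enters and the proof is actually won.
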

\begin{remark}
In this proposition we only assume $\varphi$ is {\em generic}.
\end{remark}
\par This proposition will be proved via linearization technique combined with a lemma in linear dynamics as in \cite{Shah_2}. 

\begin{definition}
\label{def:representation_G}
 Let $\mathfrak{g}$ denote the Lie algebra of $G$, and denote $d = \dim G$. We define
 $$V = \bigoplus_{i=1}^{d} \bigwedge\nolimits^i \mathfrak{g},$$
 and let $G$ act on $V$ via $\bigoplus_{i=1}^d \bigwedge^i \mathrm{Ad}(G)$. This defines a linear representation of $G$:
 $$G \rightarrow \mathrm{GL}(V).$$
\end{definition}
\begin{remark}
In this paper, we will treat $V$ as a representation of $H$.
\end{remark}
The following theorem due to Kleinbock and Margulis is the basic tool to prove that there is no mass-escape when we
pass to a limit measure:
\begin{theorem}[see \cite{Dani} and \cite{Klein_Mar}]
 \label{non_divergence_theorem}
 Fix a norm $\norm{\cdot }$ on $V$. There exist finitely many vectors  $v_1, v_2, \dots , v_r \in V$ such that for each 
 $i=1,2,\dots, r$, the orbit $\Gamma v_i$ is discrete, and moreover, the
following holds: for any $\epsilon >0$ and $R > 0$, there exists a
compact set $K\subset G/\Gamma$ such that for any $t >0$ and any subinterval $J\subset I$, one of the
following holds:
\begin{enumerate}[label=\textbf{S.\arabic*}]
\item There exist $\gamma \in \Gamma$ and $j\in \{1,\dots , r\}$
such that $$\sup_{s\in J} \| a(t)u(\varphi(s)) g \gamma v_j \| < R,$$
\item $$|\{ s\in J:  a(t)u(\varphi(s))x \in K\}| \geq (1-\epsilon)|J|.$$
\end{enumerate}
\end{theorem}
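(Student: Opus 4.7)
The plan is to apply the Kleinbock--Margulis/Dani non-divergence framework, which involves three standard ingredients: a representation-theoretic description of the cusps of $G/\Gamma$, uniform $(C,\alpha)$-good estimates for analytic curves, and the Kleinbock--Margulis iterative covering procedure.

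First I would identify the vectors $v_1,\dots,v_r$. By reduction theory for the arithmetic lattice $\Gamma \subset G$, the cusps of $G/\Gamma$ are detected by finitely many $\Gamma$-orbits of primitive integral vectors in the representation $V = \bigoplus_i \bigwedge^i \mathfrak{g}$, and each such orbit is discrete since $\Gamma$ preserves the $\mathbb{Q}$-rational structure. The standard Mahler-type criterion then asserts that a point $hx \in G/\Gamma$ lies in a prescribed compact set if and only if $\| h g \gamma v_j \|$ is bounded below uniformly in $\gamma \in \Gamma$ and $j$. Next, for each fixed $t > 0$, $\gamma \in \Gamma$, and $j$, every coordinate of the $V$-valued function $s \mapsto a(t) u(\varphi(s)) g \gamma v_j$ is real analytic in $s$, since it is built from the analytic entries of $\varphi$ via the polynomial embedding $u$. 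Such analytic functions are $(C,\alpha)$-good on the compact interval $I$, with constants depending only on $\varphi$ and on $\dim V$. Crucially, $a(t)$ acts diagonally on a weight basis of $V$ and so rescales each coordinate by a factor constant in $s$; since the $(C,\alpha)$-good property is invariant under multiplication of $f$ by a nonzero constant, these rescalings (and the analogous ones coming from the left factor $g$ and the right factor $\gamma$) do not affect $C$ or $\alpha$.

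Next I would run the Kleinbock--Margulis iterative selection lemma. Given $\epsilon > 0$ and $R > 0$, one inductively builds a compact set $K = K(\epsilon, R) \subset G/\Gamma$ such that, for each subinterval $J \subset I$ and each $t > 0$, the procedure either selects a single vector $\gamma v_j$ whose norm $\| a(t) u(\varphi(s)) g \gamma v_j \|$ stays below $R$ on all of $J$ (case \textbf{S.1}), or, using the uniform $(C,\alpha)$-good estimates for the relevant coordinate functions, certifies that the set $\{s \in J : a(t) u(\varphi(s)) x \notin K\}$ has measure at most $\epsilon |J|$ (case \textbf{S.2}).

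The main obstacle is establishing the uniformity of the $(C,\alpha)$-good constants as $t \to +\infty$ and as $\gamma$ ranges over the arithmetic group; such uniformity would in general fail for merely smooth $\varphi$, and is precisely where analyticity enters. The resolution, as indicated above, is that $a(t)$ preserves the weight decomposition of $V$ and rescales each weight component by a factor independent of $s$, so the relative sublevel-set structure of the coordinate functions is preserved. With this uniformity in hand, the conclusion of the theorem is just the output of the Kleinbock--Margulis selection procedure applied to the family $\{s \mapsto a(t) u(\varphi(s)) x : t > 0\}$.
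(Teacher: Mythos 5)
The paper does not actually prove this theorem: it is imported wholesale from Dani \cite{Dani} and Kleinbock--Margulis \cite{Klein_Mar}, and the only commentary offered is a one-sentence remark that the crucial ingredient is uniform $(C,\alpha)$-goodness of the coordinate functions of $a(t)u(\varphi(\cdot))$ in the representation $V$. Your sketch correctly identifies this as the central point and correctly outlines the KM iterative covering scheme, so at the level of architecture you and the paper agree. However, two of your specific claims do not survive scrutiny in the paper's actual generality.

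First, you invoke ``reduction theory for the arithmetic lattice $\Gamma \subset G$'' and say that $\Gamma$ preserves a $\mathbb{Q}$-rational structure. But Theorem \ref{goal_thm}, and hence the hypotheses in force for Theorem \ref{non_divergence_theorem}, permit $G$ to be any Lie group containing $H=\SL(m+n,\R)$ and $\Gamma$ to be an arbitrary lattice in $G$; arithmeticity is never assumed. In the non-arithmetic case there is no integral structure, and the finitely many vectors $v_1,\dots,v_r$ with discrete $\Gamma$-orbits come instead from the Dani--Margulis structure theory for homogeneous spaces of general Lie groups (essentially, from the finitely many conjugacy classes of maximal subgroups $L$ with $L\cap\Gamma$ a lattice whose associated $p_L$ detect non-divergence, together with the discreteness of $\Gamma p_L$). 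You cannot reduce to rational vectors here.

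Second, your argument for uniformity of the $(C,\alpha)$-good constants in $\gamma$ is not correct as stated. You observe that $a(t)$ acts diagonally on a weight basis and hence rescales coordinates by constants, which is fine, but you then claim that ``the analogous ones coming from the left factor $g$ and the right factor $\gamma$'' are likewise rescalings that do not affect $C$ or $\alpha$. A generic $\gamma\in\Gamma$ is not diagonalizable and does not act as a coordinatewise rescaling, so this is not an analogous step. The actual mechanism in Kleinbock--Margulis is that the functions $s\mapsto$ (coordinates of $u(\varphi(s))w$), as $w$ ranges over $V$, all lie in a fixed finite-dimensional space of real-analytic functions on the compact interval $I$, and such finite-dimensional analytic families admit $(C,\alpha)$-good constants uniform over the whole family (cf.\ \cite[Prop.~3.4]{Klein_Mar}). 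This compactness-of-a-sphere argument in a finite-dimensional function space, not a rescaling, is what gives uniformity over $g\gamma v_j$; your proof needs to appeal to it explicitly. With that substitution, and with the identification of the $v_i$ replaced by the Dani--Margulis construction valid for arbitrary lattices, the sketch becomes a correct account of the cited proof.
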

\begin{remark}
 For the case $\varphi(s)$ is polynomial curve, the proof is due to Dani \cite{Dani}, for the case of analytic curve, 
 the proof is due to Kleinbock and Margulis \cite{Klein_Mar}. The crucial part of the proofs is to find some constants 
 $C>0$ and $\alpha>0$ such that in this particular representation, all the coordinate functions of $a(t)u(\varphi(\cdot))$ 
 are $(C, \alpha)$-good. Here a function $f: I \rightarrow \R$ is called
 $(C,\alpha)$-good if for any subinterval $J \subset I$ and any $\epsilon >0$, the following holds:
 $$|\{s\in J: |f(s)|<\epsilon\}| \leq C\left(\frac{\epsilon}{\sup_{s\in J}|f(s)|}\right)^{\alpha} |J|.$$
\end{remark}

\begin{notation}
\label{notation:weight-components}
 Let $\mathcal{V}$ be a finite dimensional linear representation of a Lie group $F$. Then for a one-parameter diagonal subgroup 
 $D=\{d(t): t \in \R\}$ of $F$, we could decompose $\mathcal{V}$ as the direct sum of eigenspaces of $D$; that is,
 $$\mathcal{V} = \bigoplus_{\lambda \in \R} \mathcal{V}^{\lambda}(D),$$
 where $\mathcal{V}^{\lambda}(D) = \{v\in \mathcal{V}: d(t)v = e^{\lambda t} v\}$.
 \par We define
 \[
 \mathcal{V}^{+}(D) = \bigoplus_{\lambda >0} \mathcal{V}^{\lambda}(D), \quad 
 \mathcal{V}^{-}(D) = \bigoplus_{\lambda <0 } \mathcal{V}^{\lambda}(D), \quad 
 \mathcal{V}^{\pm0}(D) = \mathcal{V}^{\pm}(D) + \mathcal{V}^0(D).
 \]
 For a vector $v \in \mathcal{V}$, we denote by $v^{+}(D)$ ($v^{\lambda}(D)$, $v^{-}(D)$, $v^0(D)$, $v^{+0}(D)$ and $v^{-0}(D)$ respectively) the projection of $v$ to $\mathcal{V}^{+}(D)$ ($\mathcal{V}^{\lambda}(D)$, $\mathcal{V}^{-}(D)$, $\mathcal{V}^0(D)$, $\mathcal{V}^{+0}(D)$ and 
 $\mathcal{V}^{-0}(D)$ respectively) with respect to the above direct sums.
 
 % Choose the norm $\norm{\cdot}$ on $V$ to be the maximum norm associated to some choices of norms on $V^{\lambda}(A)$'s.
\end{notation}

\par The proof of Proposition~\ref{prop_no_escape_mass} depends on the following property of finite dimensional representations of $\SL(m+n,\R)$:

\begin{lemma}[Basic Lemma]
\label{basic_lemma_1}
Let $V$ be a finite dimensional representation of $\SL(m+n, \R)$, and let $A =\{a(t): t\in \R\}\subset \SL(m+n,\R)$ denote the diagonal subgroup as in \eqref{eq:a(t)}. 
If an analytic curve 
$$\varphi: I=[a,b]\rightarrow \M(m\times n, \R)$$
is {\em generic\/}, then for any nonzero vector $v \in V$, there exists some $s \in I$ such that 
$$u(\varphi(s))v \not\in V^{-}(A).$$
\end{lemma}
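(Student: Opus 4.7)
I argue by contradiction: suppose $v \in V$ is nonzero with $u(\varphi(s)) v \in V^-(A)$ for all $s \in I$, and aim for $v = 0$.

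First, I reduce to a convenient form. Pick $s_0 \in I$ at which $\varphi$ is generic with witness subinterval $J_{s_0}$; replace $v$ by $u(\varphi(s_0)) v$ and $\varphi$ by $\varphi - \varphi(s_0)$. This preserves the hypothesis and gives $\varphi(s_0) = 0$ and $v \in V^-(A)$. Letting $\pi : V \to V^{+0}(A)$ denote the projection, the hypothesis reads $\pi(u(\varphi(s)) v) = 0$ on $J_{s_0}$. Since $\mathfrak{n}^+$ acts nilpotently on $V$, expanding $u(X) = \exp(\mathfrak{n}^+(X))$ and decomposing $v = \sum_\lambda v^\lambda$ by $A$-weights turns this into a $V^{+0}(A)$-valued polynomial identity in the entries of $X = \varphi(s)$:
\[
\sum_{\lambda, k :\, \lambda + k(n+m) \ge 0}\; \tfrac{1}{k!}\, \mathfrak{n}^+(\varphi(s))^k\, v^\lambda \;=\; 0, \qquad s \in J_{s_0}.
\]

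In the balanced case $m = n$, the generic condition gives $\varphi(s) \in \GL(m, \R)$ for $s \in J_{s_0}\setminus\{s_0\}$, so I invoke the $\SL(2, \varphi(s))$-triple of Definition~\ref{SL2X}. Because $A$ lies inside each $\SL(2, \varphi(s))$ when $m = n$, and because a direct calculation shows $\sigma(X)\, a(t)\, \sigma(X)^{-1} = a(-t)$, conjugation by the Weyl element $\sigma(\varphi(s))$ swaps $V^+(A)$ and $V^-(A)$. Decomposing $V\big|_{\SL(2,\varphi(s))}$ into isotypic pieces and applying the standard $\mathfrak{sl}_2$-calculation within each irreducible component $V_k$, the constraint $\exp(\mathfrak{n}^+(\varphi(s)))\, v_k \in V^-(A) \cap V_k$ places $v_k$ in a proper linear subspace of $V_k$; letting $s$ range over $J_{s_0}$ and exploiting the analyticity of $\varphi$, the joint family of these linear conditions is argued to intersect to $\{0\}$, forcing $v = 0$. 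For $m \neq n$, I proceed by induction mirroring the recursive structure of Definition~\ref{def:geometric_condition}: if $m > n$, pass to the transpose; if $m < n$, use $\varphi = [\varphi_1, \varphi_2]$ together with a factorization identity inside $H$ (writing $u(\varphi) = u([\vect{0}, \varphi_2])\, u([\varphi_1, \vect{0}])$ and conjugating by an appropriate element of $Z_H(A)$) to recast the constraint on $v$ in terms of the generic curve $\psi(s) = (\varphi_1(s) - \varphi_1(s_0))^{-1}(\varphi_2(s) - \varphi_2(s_0))$, to which the inductive hypothesis applies.

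The main obstacle I anticipate is the balanced case: a single invertible $X$ only places $v$ in a proper linear subspace, so the proof must crucially exploit the analytic variation of $\SL(2, \varphi(s))$ as $s$ varies. Concretely, the required identity is
\[
\bigcap_{s \in J_{s_0}} u(\varphi(s))^{-1}\, V^-(A) \;=\; \{0\},
\]
and establishing it would proceed by matching Taylor coefficients at $s_0$ of the above polynomial identity against the derivatives $\{\varphi^{(j)}(s_0)\}_{j \ge 1}$, using the generic invertibility to ensure that the resulting system of linear relations on the weight components $\{v^\lambda\}$ admits only the trivial solution.
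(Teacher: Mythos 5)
The overall architecture of your plan matches the paper: reduce to $\varphi(s_0)=\vect{0}$ and $v\in V^-(A)$, handle the balanced case $m=n$ via the $\SL(2,\varphi(s))$-embedding, and induct for $m<n$ by passing through $\psi(s)=(\varphi_1(s)-\varphi_1(s_0))^{-1}(\varphi_2(s)-\varphi_2(s_0))$ and a conjugation identity such as $u(\varphi(s))=u'(-\psi(s))\,u([\varphi_1(s);\vect{0}])\,u'(\psi(s))$ with $u'(\cdot)\in Z_H(A)$. Where your plan has a genuine gap is the balanced case, and the gap propagates into the inductive step.

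For $m=n$ you anticipate that a single invertible $X=\varphi(s)-\varphi(s_0)$ "only places $v$ in a proper linear subspace," and then propose to win by matching Taylor coefficients and intersecting over $s\in J_{s_0}$. This underestimates what the $\SL(2)$-structure gives. The paper's Lemma~\ref{yang_lemma} (in particular \eqref{eq:-+}) shows that in an $\SL(2,\R)$-representation with $A$-weight decomposition, $\lambda^{\max}(v)<0$ forces $\lambda^{\max}(u(r)v)>0$ for every $r\neq 0$ — via the Weyl-element identity $u(r)=\sigma(r)u(-r)u^-(r^{-1})$ and the fact that $u^-$ preserves $\lambda^{\max}$ while $\sigma(r)$ flips $\lambda^{\max}\mapsto-\lambda^{\min}$. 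When $m=n$, $A$ lies inside $\SL(2,X)$, so applying this to $v':=u(\varphi(s_0))v$ (which is in $V^-(A)$) and $r=\varphi(s)-\varphi(s_0)$ for any one $s$ with $\varphi(s)-\varphi(s_0)$ invertible gives $u(\varphi(s))v\notin V^{-0}(A)$ outright — no intersection over $s$ and no Taylor expansion is needed. Your Taylor-matching alternative is both unnecessary and hard to make rigorous: the generic condition in Definition~\ref{def:geometric_condition} for $m=n$ only asserts invertibility of $\varphi(s)-\varphi(s_0)$ on $J_{s_0}$ and imposes no curvature or jet conditions; for a linear curve $\varphi(s)=s\,X_0$ the only nonvanishing Taylor data is $\varphi^{(1)}(s_0)=X_0$, and without the sharp $\SL(2)$-lemma you have no mechanism to conclude the coefficient system is nondegenerate. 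In effect, the intersection identity $\bigcap_{s}u(\varphi(s))^{-1}V^-(A)=\{0\}$ you are targeting would, if you pushed it through, have to be proved by re-deriving precisely that lemma.

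The inductive step in the paper also hides more than your sketch acknowledges. After the conjugation by $u'(\psi(s))\in Z_H(A)$, one decomposes $V$ into $A\ltimes\SL(2,\varphi_1(s))$-isotypic pieces, uses the $\SL(2)$-lemma again (via Corollary~\ref{cor:sl2}, now for the auxiliary diagonal $A_1$) to force the $A_1$-weight of the top $A$-weight component of $u'(\psi(s))v$ to be nonnegative, and only then concludes $u'(\psi(s))v^{\mu_0}(A)\in V^-(A_2)$, which is what the inductive hypothesis for $(m,n-m)$ applies to. Your sketch names the right conjugation and the right inductive target but does not produce this weight bookkeeping, which is the actual content. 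So the proposal is a reasonable outline of the paper's strategy, but without formulating and proving the $\SL(2,\R)$ lemma \eqref{eq:max-urv}/\eqref{eq:-+} it cannot close either the base case or the induction.
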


A proof of this statement is one of the most important technical contributions of this paper, and we will postpone its proof to \S \ref{lemmas}.

\begin{proof}[Proof of Proposition \ref{prop_no_escape_mass} assuming Lemma \ref{basic_lemma_1}]
\par Let $V$ be as in Definition \ref{def:representation_G}. Since $A \subset H$ is a diagonal subgroup, we have the following decomposition: 
$$V = \bigoplus_{\lambda \in \R} V^{\lambda}(A)$$
where $V^{\lambda}(A)$ is defined as in Notation \ref{notation:weight-components}. Choose the norm $\norm{\cdot}$ on $V$ to be the maximum norm associated to some choices of norms on $V^{\lambda}(A)$'s.
 \par For contradiction we assume that there exists a constant $\epsilon>0$ such that for any compact subset $\mathcal{K} \subset G/\Gamma$, there exist some $t > 0$ such that $\lambda_{t}(\mathcal{K}) <1 - \epsilon$. Now we fix a sequence $\{R_i> 0: i \in \N \}$ tending to zero. By Theorem \ref{non_divergence_theorem}, for any $R_i$, there exists a 
 compact subset $\mathcal{K}_i\subset G/\Gamma$, such that for any $t >0$, one of the
following holds:
\begin{enumerate}[label=\textit{S\arabic*.}]
\item \label{divergence_condition} There exist $\gamma \in \Gamma$ and $j\in \{1,\dots , r\}$
such that $$\sup_{s\in I} \| a(t)u(\varphi(s)) g \gamma v_j \| < R_{i},$$
\item \label{non_divergence_condition} $$|\{ s\in I:  a(t)u(\varphi(s))x \in \mathcal{K}_i\}| \geq (1-\epsilon)\abs{I}.$$
\end{enumerate}
\par From our hypothesis, for each $\mathcal{K}_i$, there exists some $t_i>0$ such that \ref{non_divergence_condition} does not hold.
So there exist $\gamma_i \in \Gamma$ and $v_{j(i)}$ such that 
\begin{equation} \label{eq:vj}
\sup_{s\in I} \| a(t_i)u(\varphi(s)) g \gamma_i v_{j(i)} \| < R_i.
\end{equation}
By passing to a subsequence of $\{i\in \N\}$, we may assume that $v_{j(i)} = v_j$ remains the same for all $i$. 

Since $\Gamma v_j$ is discrete in $V$, we have $t_{i}\to \infty$ as $i\to\infty$ and there are the following two cases:
\begin{enumerate}[label=\textit{Case~\arabic*.}]
\item \label{case_1} By passing to a subsequence of $\{i\in \N\}$, $\gamma_i v_j= \gamma v_j$ remains the same for all $i$.
\item \label{case_2} $\|\gamma_i v_j\| \rightarrow \infty $ along some subsequence.
\end{enumerate}
\par For \ref{case_1}: We have $a(t_i) u(\varphi(s)) g \gamma v_j \rightarrow \vect{0}$ as $i \rightarrow \infty$ for all $s \in I$. This implies that 
$$\{u(\varphi(s)) g \gamma v_j\}_{s\in I} \subset V^{-}(A),$$
which contradicts Lemma \ref{basic_lemma_1}.
\par For \ref{case_2}: After passing to a subsequence, we have
 \begin{equation} \label{eq:v}
 v := \lim_{i\rightarrow \infty} g \gamma_i v_j/{\norm{g \gamma_i v_j}}, \quad
\norm{v}=1\text{, and } \lim_{i\to\infty}\norm{g \gamma_i v_j} = \infty.
\end{equation}
By Lemma~\ref{basic_lemma_1}, let $s\in I$ be such that $u(\varphi(s))v\not\in V^{-}(A)$. Then by \eqref{eq:v} there exists $\delta_{0}>0$ and $i_{0}\in\N$ such that 
\[
\norm{(u(\varphi(s))g \gamma_i v_j)^{0+}}\geq \delta_{0} \norm{g \gamma_i v_j}, \quad \forall i\geq i_{0}.
\]
Then 
\[
\norm{a(t_{i})u(\varphi(s))g \gamma_i v_j}\geq \delta_{0} \norm{g \gamma_i v_j}\to\infty,
\text{ as $i\to\infty$},
\] 
which contradicts \eqref{eq:vj}. Thus Cases~1~and~2  both lead to contradictions. 
\end{proof}
\begin{remark}
\label{rmk:non-divergence}
 The same proof also shows that any limit measure of $\{\mu_t : t >0\}$ 
is still a probability measure, which is the non-divergence part of Theorem \ref{goal_thm}. 
\end{remark}

%%%%%%%%%%%%%%%%%%%%%%%%%%%%%%%%%%%%%%%%%%%%%%%%%%%%%%%%%%%%%%%%%%%%%%%%%%%%%%%%%%%%
%%%%%%%%%%%%%%%%%%%%%%%%%%%%%%%%%%%%%%%%%%%%%%%%%%%%%%%%%%%%%%%%%%%%%%%%%%%%%%%%%%%%
%%%%%%%%%%%%%%%%% RATNER'S THEOREM AND LINEARIZATION %%%%%%%%%%%%%%%%%%%%%%%%%%%%%%%
%%%%%%%%%%%%%%%%%%%%%%%%%%%%%%%%%%%%%%%%%%%%%%%%%%%%%%%%%%%%%%%%%%%%%%%%%%%%%%%%%%%%
%%%%%%%%%%%%%%%%%%%%%%%%%%%%%%%%%%%%%%%%%%%%%%%%%%%%%%%%%%%%%%%%%%%%%%%%%%%%%%%%%%%%

\section{Ratner's theorem and linearization technique}
\label{ratner_linearization}

Take any convergent subsequence $\lambda_{t_i} \rightarrow \mu_{\infty}$. By Proposition~\ref{prop_invariant_under_unipotent} and Proposition~
\ref{prop_no_escape_mass}, $\mu_{\infty}$ is a $W$-invariant probability measure on $G/\Gamma$, where $W$ is a unipotent one-parameter subgroup given by \eqref{eq:W}. We will apply Ratner's theorem and the linearization technique to understand the measure $\mu_{\infty}$. 

\begin{notation}
\label{def:for_linearization}
 Let $\mathcal{L}$ be the collection of proper analytic subgroups $L < G$ such that $L\cap \Gamma$ is a lattice of $L$. Then $\mathcal{L}$ is a countable set (\cite{Ratner}).
\par For $L\in \mathcal{L}$, define:
$$N(L,W):= \{g\in G: g^{-1}Wg\subset L\}  
\text{, and }
S(L,W):= \bigcup_{L'\in \mathcal{L}, L' \subsetneq L} N(L', W).$$
\end{notation}
\par We formulate Ratner's measure classification theorem as follows (cf. \cite{MS1995}):
\begin{theorem}[\cite{Ratner}]
\label{ratner} Given the $W$-invariant probability measure $\mu$ on
$G/\Gamma$, if $\mu$ is not $G$-invariant then there exists $L \in \mathcal{L}$ such that
\begin{equation}
\begin{array}{ccc}
\mu(\pi(N(L,W)))>0 & \text{ and } & \mu(\pi(S(L,W)))=0.
\end{array}
\end{equation}
Moreover, almost every $W$-ergodic component of $\mu$ on
$\pi(N(L,W))$ is a measure of the form $g\mu_L$ where $g\in
N(L,W)\backslash S(L,W)$, $\mu_L$ is a finite $L$-invariant measure
on $\pi(L)$, and $g\mu_L(E)=\mu_L (g^{-1}E)$ for all Borel sets
$E\subset G/\Gamma$. In particular, if $L\lhd  G$, then the restriction of $\mu$ on $\pi(N(L,W))$ is
$L$-invariant.
\end{theorem}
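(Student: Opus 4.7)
The plan is to deduce this precise formulation from the basic Ratner measure rigidity theorem, which asserts that every $W$-ergodic probability measure on $G/\Gamma$ is algebraic: supported on a closed orbit of some $L\in\mathcal{L}$ and equal to a translate of a finite $L$-invariant measure. First I would apply the $W$-ergodic decomposition $\mu=\int_{Y}\mu_{y}\,d\nu(y)$. Basic Ratner gives, for $\nu$-almost every $y$, some $L_{y}\in\mathcal{L}$ and $g_{y}\in G$ such that $\mu_{y}=g_{y}\mu_{L_{y}}$, where $\mu_{L_{y}}$ is the $L_{y}$-invariant probability measure on $\pi(L_{y})$. The $W$-invariance of $\mu_{y}$ is equivalent to $W\subset g_{y}L_{y}g_{y}\inv$, which is the same as $g_{y}\in N(L_{y},W)$. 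Since $\mu$ is assumed not to be $G$-invariant, the set of $y$ with $L_{y}\ne G$ has positive $\nu$-measure.

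Next I would exploit the countability of $\mathcal{L}$ from \cite{Ratner} together with a minimality argument. Partition the positive-measure set $\{y:L_{y}\ne G\}$ into countable pieces indexed by the possible values of $L_{y}$, and choose $L\in\mathcal{L}$ so that $\nu(\{y:g_{y}L_{y}g_{y}\inv \subset \text{some conjugate of }L\})>0$ with $\dim L$ as small as possible. This choice produces $\mu(\pi(N(L,W)))>0$ because on that piece each $g_{y}$ lies in $N(L,W)$, and the supports $\pi(L g_{y}\inv)$ (equivalently their image in $G/\Gamma$) lie in $\pi(N(L,W))$. For the second conclusion $\mu(\pi(S(L,W)))=0$, I would argue by contradiction: if positive mass were contained in $\pi(N(L',W))$ for some $L'\in\mathcal{L}$ with $L'\subsetneq L$, then $L'$ would be an admissible choice of strictly smaller dimension, violating minimality. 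The ``moreover'' clause then follows by reading off the $W$-ergodic decomposition on $\pi(N(L,W))\setminus\pi(S(L,W))$: each ergodic component is of the form $g\mu_{L}$ with $g\in N(L,W)\setminus S(L,W)$, which is exactly the assertion. The final clause for $L\triangleleft G$ reduces to observing that in that case $N(L,W)=G$, and averaging the translates $g\mu_{L}$ over $g\in L$ is intrinsic to the measure structure, so restriction of $\mu$ to $\pi(N(L,W))$ is automatically $L$-invariant.

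The main obstacle is making the minimality step rigorous within the measure-theoretic framework, since one needs to ensure that the family of sets $\{y: g_{y}L_{y}g_{y}\inv\subset\text{conj}(L')\}$ is $\nu$-measurable for each $L'\in\mathcal{L}$ and that the minimum of $\dim L'$ is actually attained over this countable family. This is exactly the point where the linearization technique from \cite{MS1995} enters: one attaches to each $L\in\mathcal{L}$ a representation $V_{L}$ and a vector $v_{L}\in V_{L}$ whose $G$-stabilizer is the normalizer of $L$, so that $N(L,W)$ is characterized by a polynomial condition on $g\cdot v_{L}$, and $S(L,W)$ corresponds to degenerate configurations. This turns the ergodic-theoretic minimality question into a tractable statement about which algebraic orbits in $V_{L}$ carry positive $\mu$-mass, which is the form used subsequently in the paper to analyze the limit measure $\mu_{\infty}$.
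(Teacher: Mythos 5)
The paper does not prove this statement; it is cited directly from \cite{Ratner}, with the formulation following \cite{MS1995}, so there is no in-paper argument to compare against. Your reconstruction is the standard derivation of this packaged form of Ratner's theorem from the basic measure-rigidity statement, and it is correct in outline.

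A few points deserve tightening, though. The minimality step is cleaner than you make it: by ergodic decomposition and Ratner, the non-$G$-invariance of $\mu$ forces $\mu(\pi(N(L,W)))>0$ for some $L\in\mathcal{L}$ (positive $\nu$-mass of $y$ have $L_y\ne G$, and by countability of $\mathcal{L}$ positive $\nu$-mass have $L_y$ equal to a single $L$); among all such $L$ choose one of minimal dimension. Since the members of $\mathcal{L}$ are connected, any $L'\in\mathcal{L}$ with $L'\subsetneq L$ has strictly smaller dimension, and $S(L,W)$ is the countable union of the sets $N(L',W)$, so $\mu(\pi(S(L,W)))>0$ would contradict minimality. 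Your detour through ``conjugates of $L$'' is unnecessary and does not quite match how $N(\cdot,W)$ and $S(\cdot,W)$ are defined (set inclusion of subgroups, not conjugacy classes). Your appeal to linearization is also misplaced: the measurability of $y\mapsto L_y$ is resolved within the ergodic decomposition framework itself, whereas the linearization machinery in \cite{MS1995} and in the paper's \S 3 serves a different purpose, namely translating the conclusion $\mu(\pi(N(L,W)))>0$ into control on the orbit $\Gamma p_L$ in a linear representation. Finally, for the clause when $L\lhd G$, the direct argument is that each ergodic component $g\mu_L$ is already $L$-invariant: for $l\in L$, normality gives $(g\mu_L)(lE)=\mu_L(g^{-1}lE)=\mu_L((g^{-1}lg)\,g^{-1}E)=\mu_L(g^{-1}E)$, and hence the restriction of $\mu$ is $L$-invariant with no averaging needed.
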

\par If $\mu_{\infty}=\mu_G$, then there is nothing to prove. So we may assume $\mu_{\infty}\neq \mu_G$. Then by Ratner's Theorem, 
there exists $L \in \mathcal{L}$ such that 
\begin{equation} \label{eq:L}
\mu_{\infty}(\pi(N(L,W))) > 0 \text{ and }  \mu_{\infty}(\pi(S(L,W))) =0.
\end{equation}

Now we want to apply the linearization technique to obtain algebraic consequences of this statement.

\begin{notation}
Let $V$ be the finite dimensional representation of $G$ defined as in Definition \ref{def:representation_G}, for $L \in \mathcal{L}$,
we choose a basis $\mathfrak{e}_1, \mathfrak{e}_2, \dots, \mathfrak{e}_l$ of the Lie algebra $\mathfrak{l}$ of $L$, and define 
$$p_L = \wedge_{i=1}^l \mathfrak{e}_i \in V.$$
Define 
$$\Gamma_L := \left\{\gamma\in \Gamma: \gamma p_L = \pm p_L \right\}.$$
From the action of $G$ on $p_L$, we get a map:
$$\begin{array}{l} \eta:  G \rightarrow V,  \\
                     g \mapsto g p_L .
   \end{array}
$$
Let $\mathcal{A}$ denote the Zariski closure of $\eta(N(L,W))$ in $V$. Then $N(L,W)=G\cap \eta^{-1}({\mathcal A})$. 
\end{notation}

Using the fact that $\varphi$ is analytic, we obtain the following consequence of the linearization technique (cf.\ \cite{Shah_1,Shah_2,Shah2010}).

\begin{proposition}[{\cite[Proposition~5.5]{Shah_2}}]
\label{relative_small prop} Let $C$ be a compact subset of $N(H,W)\setminus S(H,W)$. Given $\epsilon > 0$, there exists a
compact set $\mathcal{D}\subset \mathcal{A}$ such that, 
given a relatively compact neighborhood $\Phi$ of
$\mathcal{D}$ in $V$, there exists a neighborhood $\mathcal{O}$
of $C\Gamma$ in $G/\Gamma$ such that for any $t\in\R$ and subinterval
$J\subset I$, one of the following statements holds:
\begin{enumerate}[label=\textit{SS\arabic*.}]
\item \label{1} $|\{s\in J: a(t)u(\varphi(s))g\Gamma \in \mathcal{O}\}|\leq \epsilon \abs{J}$.
\item \label{2} There exists $\gamma\in\Gamma$ such that $a(t)z(s)u(\varphi(s))g\gamma p_{L} \in \Phi$ for all $s\in J$.
\end{enumerate}
\end{proposition}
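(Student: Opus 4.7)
The proof is a standard application of the linearization technique of Dani and Margulis, developed in this setting in \cite{Shah_1,Shah_2,Shah2010}. The overall strategy is to translate the question of orbits returning to a small neighborhood of $N(L,W)\Gamma/\Gamma \subset G/\Gamma$ into a linear dynamics question on the representation $V$, and then to exploit the $(C,\alpha)$-good property of the trajectories $s\mapsto a(t)u(\varphi(s))v$.

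First, I would construct $\mathcal{D}$ together with a separation of lifts. Since $C$ is compact and avoids $S(L,W)$, the image $\eta(C)\subset \mathcal{A}$ avoids the Zariski-closed subvariety corresponding to the lower-dimensional strata of $\mathcal{A}$; concretely, $N(L,W) = G\cap \eta^{-1}(\mathcal{A})$, while $S(L,W)$ maps into the singular locus. A Dani--Margulis style argument, using that $\Gamma p_L$ is discrete in $V$ (equivalent to $L\cap\Gamma$ being a lattice in $L$), then produces a compact $\mathcal{D}\subset \mathcal{A}$ with $\eta(C)\subset \mathcal{D}$ and a small neighborhood $\Phi_0$ of $\mathcal{D}$ in $V$ enjoying two separation properties: (a) the image of $g\cdot\eta^{-1}(\Phi_0)$ in $G/\Gamma$ contains a neighborhood $\mathcal{O}_0$ of $C\Gamma$; and (b) for any $h\Gamma \in \mathcal{O}_0$ the set $\{\gamma\in\Gamma : hg\gamma p_L \in \Phi_0\}$ is a single $\Gamma_L$-coset.

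Next, I would invoke the $(C,\alpha)$-good property. For any $v\in V$, the coordinate functions of $s\mapsto a(t)u(\varphi(s))v$ are real-analytic in $s$ with derivatives controlled uniformly on the closed interval $I$; by Kleinbock--Margulis there exist constants $C_0,\alpha_0>0$, independent of $t$ and $v$, for which each such coordinate function is $(C_0,\alpha_0)$-good. This is the main quantitative input.

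Finally, I would derive the dichotomy. Given $\Phi$, pick an intermediate open set $\Phi_1$ with $\overline{\Phi_1}\subset \Phi\cap \Phi_0$, enlarged enough to absorb the action of the bounded family $\{z(s):s\in I\}$, and let $\mathcal{O}$ be the corresponding neighborhood of $C\Gamma$ from property (a). If \textit{SS1} fails for some pair $(t,J)$, then on a subset of $J$ of measure larger than $\epsilon\abs{J}$ there exist $\gamma(s)\in\Gamma$ with $a(t)z(s)u(\varphi(s))g\gamma(s)p_L \in \Phi_1$. Property (b), continuity in $s$, and discreteness of $\Gamma p_L$ force $\gamma(s)$ to be locally constant, so only finitely many values $\gamma_1,\dots,\gamma_k$ arise. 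The $(C_0,\alpha_0)$-good property applied to each trajectory $a(t)u(\varphi(\cdot))g\gamma_i p_L$ then prevents brief exits from $\Phi$ followed by return with a different $\gamma_i$: these lifts must coalesce into a single $\gamma\in\Gamma$ valid for all $s\in J$, yielding \textit{SS2}. The main obstacle is exactly this merging step: promoting pointwise existence of lifts on a positive-measure subset of $J$ to a uniform lift on all of $J$, which is the technical heart of the linearization argument and relies on the uniform $(C_0,\alpha_0)$-good estimate to rigidify the trajectory against jumps between distinct lifts separated in $V$.
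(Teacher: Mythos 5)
The paper proves this proposition only by citation to \cite[Prop.~5.5]{Shah_2}, so there is no in-paper argument to compare against; your sketch correctly identifies the standard Dani--Margulis linearization ingredients as used there: discreteness of $\Gamma p_L$, separation of lifts over the compact set $C$ disjoint from $S(L,W)$, and the uniform $(C,\alpha)$-good estimate for the analytic trajectory $s\mapsto a(t)z(s)u(\varphi(s))g\gamma p_L$.

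One point to tighten. Your final step attributes to the $(C,\alpha)$-good estimate the job of ``preventing jumps between distinct $\gamma_i$''. In the actual argument the $(C,\alpha)$-good estimate is applied to a single fixed $\gamma$: it upgrades ``the trajectory lies in the shrunk neighborhood $\Phi_1$ on a subset of $J$ of measure $>\epsilon|J|$'' to ``the trajectory stays in the relatively compact $\Phi$ for all $s\in J$'', which is exactly conclusion \textit{SS2}. The role of forbidding two distinct $\gamma$'s from contributing simultaneously over the neighborhood $\mathcal{O}$ is played by your separation property (b) (discreteness of $\Gamma p_L$, $C\cap S(L,W)=\varnothing$, and compactness of $C$), not by the $(C,\alpha)$-good estimate. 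Also, distinct lifts do not literally ``coalesce'': rather, the disjointness of the return sets $E_\gamma$ (up to $\Gamma_L$) combined with a covering-by-maximal-subintervals argument shows that, if \textit{SS1} fails, some single $\gamma$ already carries enough mass to trigger the $(C,\alpha)$-good upgrade. Your sketch has the right ingredients but blurs which one does which.
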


The following proposition provides the obstruction to the limiting measure not being $G$-invariant in terms of linear actions of groups, and it is a key result for further investigations. 

\begin{proposition}
 \label{prop_algebraic_condition}
 There exists a $\gamma \in \Gamma$ such that
\begin{equation}\label{eq:V0-}
\{u(\varphi(s))g\gamma p_L\}_{s\in I} \subset V^{-0}(A).
\end{equation}
\end{proposition}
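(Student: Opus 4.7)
The plan is to combine Ratner's theorem (which already produced the subgroup $L$), the linearization proposition (Proposition~\ref{relative_small prop}), and the Basic Lemma (Lemma~\ref{basic_lemma_1}) to produce the required $\gamma$. First, by inner regularity of the Radon probability measure $\mu_\infty$ together with \eqref{eq:L}, I would pick $\epsilon>0$ small and a compact subset $C \subset N(L,W) \setminus S(L,W)$ with $\mu_\infty(\pi(C)) > 2\epsilon$, and then invoke Proposition~\ref{relative_small prop} to obtain a compact $\mathcal{D}\subset \mathcal{A}$; choose a relatively compact neighborhood $\Phi$ of $\mathcal{D}$ in $V$ and the resulting neighborhood $\mathcal{O}$ of $C\Gamma$ in $G/\Gamma$. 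Since $\lambda_{t_i}\to\mu_\infty$ weakly, $\lambda_{t_i}(\mathcal{O}) > \epsilon$ for all large $i$, and since the curve $\{z(s):s\in I\}$ lies in a fixed compact subset $Z\subset Z_H(A)$, this translates to
\[
|\{s\in I: a(t_i)u(\varphi(s))g\Gamma \in Z^{-1}\mathcal{O}\}| > \epsilon\abs{I}
\]
after absorbing $Z^{-1}$ into the choice of neighborhood in the proposition. Hence SS1 fails on $J=I$ at time $t_i$, and SS2 must hold: there exists $\gamma_i\in\Gamma$ with $a(t_i)z(s)u(\varphi(s))g\gamma_i p_L \in \Phi$ for every $s\in I$.

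Because $z(s)$ commutes with $a(t_i)$ (both lie in $Z_H(A)$) and $z(s)\in Z$ is uniformly bounded, the vectors $a(t_i)u(\varphi(s))g\gamma_i p_L$ are uniformly bounded in $V$, say by a constant $M$, for all $s\in I$ and all large $i$. The crux of the argument is then to show that the sequence $w_i := g\gamma_i p_L$ itself remains bounded in $V$. Assuming for contradiction that $\norm{w_i}\to\infty$, I would set $v_i := w_i/\norm{w_i}$, pass to a subsequence with $v_i\to v$, $\norm{v}=1$, and use
\[
\norm{a(t_i)u(\varphi(s))v_i} \leq M/\norm{w_i} \longrightarrow 0
\]
uniformly in $s\in I$. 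Examining the weight-space decomposition $V = V^{+}(A)\oplus V^{0}(A)\oplus V^{-}(A)$ and the way $a(t_i)$ scales each factor, this forces the $V^{+0}(A)$-component of $u(\varphi(s))v$ to vanish for every $s\in I$; that is, $u(\varphi(s))v \in V^{-}(A)$ for all $s\in I$, contradicting the Basic Lemma applied to the nonzero vector $v$. This boundedness step is the main obstacle and is exactly where the \emph{generic} hypothesis on $\varphi$ is needed (via Lemma~\ref{basic_lemma_1}).

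Once $\{w_i\}$ is bounded, the discreteness of $\Gamma p_L$ in $V$ (which follows from $L\cap\Gamma$ being a lattice in $L$, via the standard argument using $\Gamma_L\backslash\Gamma$) implies that $\{g\gamma_i p_L\}$ is discrete as well, so after passing to a subsequence it is eventually constant, equal to some $g\gamma p_L$. The uniform bound $\norm{a(t_i) u(\varphi(s))g\gamma p_L}\leq M$ then holds for this fixed $\gamma$, all $s\in I$, and all large $t_i$. Letting $t_i\to\infty$ and again invoking that $a(t_i)$ acts as $e^{\lambda t_i}$ with $\lambda>0$ on each weight space in $V^{+}(A)$, the $V^{+}(A)$-component of $u(\varphi(s))g\gamma p_L$ must vanish for every $s\in I$, which is precisely \eqref{eq:V0-}. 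The analyticity of $\varphi$ plays no role in this last step; everything delicate has been absorbed into the non-divergence input (Proposition~\ref{prop_no_escape_mass}), the Ratner-linearization input (Proposition~\ref{relative_small prop}), and the Basic Lemma.
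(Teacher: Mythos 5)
Your proof is correct and follows essentially the same route as the paper's: apply Proposition~\ref{relative_small prop} to extract $\gamma_i$, rule out $\|g\gamma_i p_L\|\to\infty$ via the weight-space decomposition and Lemma~\ref{basic_lemma_1} (this is exactly the paper's Case~2 argument, imported from Proposition~\ref{prop_no_escape_mass}), and then use discreteness of $\Gamma p_L$ to pass to a constant $\gamma$ (the paper's Case~1) before letting $t_i\to\infty$ to kill the $V^{+}(A)$-component. The only cosmetic difference is that you frame the argument as ``bounded, hence eventually constant'' rather than an explicit two-case split, but the content is identical.
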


\begin{proof}[Proof (assuming Lemma~\ref{basic_lemma_1}).]
By \eqref{eq:L}, there exists a compact subset $C \subset N(L,W))\setminus S(L,W)$  and $\epsilon>0$ such that $\mu_{\infty}(C\Gamma) >\epsilon>0$. Apply Proposition~\ref{relative_small prop} to obtain $\mathcal{D}$, and choose any $\Phi$, and obtain a $\mathcal{O}$ so that either \ref{1} or \ref{2} holds.  Since $\lambda_{t_{i}}\not\to \mu_{\infty}$, we conclude that \ref{1} does not hold for $t=t_{i}$ for all $i\geq i_{0}$. Therefore  for every $i\geq i_{0}$, \ref{2} holds and there exists $\gamma_{i}\in\Gamma$ such that
\begin{equation} \label{eq:Phi}
a(t_{i})z(s)u(\varphi(s))g\gamma_{i}p_{L}\subset \Phi.
\end{equation}

Since $\Gamma p_{L}$ is discrete in $V$, by passing to a subsequence, there are two cases:
\begin{enumerate}
\item[{\it Case 1.}] $\gamma_{i}p_{L}=\gamma p_{L}$ for some $\gamma\in\Gamma$ for all $i$; or 
\item[{\it Case 2.}] $\norm{\gamma_{i}p_{L}}\to\infty$ as $i\to \infty$. 
\end{enumerate}

In {\it Case~1}, since $\Phi$ is bounded in \eqref{eq:Phi}, we deduce that $z(s)u(\varphi(s))g\gamma p_{L}\subset V^{-0}(A)$ for all $s\in I$. Since $V^{-0}(A)$ is $Z_{H}(A)$-invartiant, \eqref{eq:V0-} holds. 

In {\it Case~2}, by arguing as in the \ref{case_2} of the Proof of Proposition~\ref{prop_no_escape_mass}, using genericness of $\varphi$ and Lemma~\ref{basic_lemma_1}, we obtain that $\norm{a(t_{i})u(\varphi(s))g\gamma_{i}p_{L}}\to\infty$. This contradicts \eqref{eq:Phi}, because $z(s)\subset Z_H(A)$ and $\Phi$ is bounded. Thus {\it Case~2} does not occur.
\end{proof}

\par Our goal is to obtain an explicit geometric condition on $\varphi(I)$ which implies that the linear algebraic condition \eqref{eq:V0-} does not hold.

%%%%%%%%%%%%%%%%%%%%%%%%%%%%%%%%%%%%%%%%%%%%%%%%%%%%%%%%%%%%%%%%%%%%%%%%%%%%%%%%%%%%%%%%%%%%%%%%%%%%%%%%%
%%%%%%%%%%%%%%%%%%%%%%%%%%%%%%%%%%%%%%%%%%%%%%%%%%%%%%%%%%%%%%%%%%%%%%%%%%%%%%%%%%%%%%%%%%%%%%%%%%%%%%%%%
%%%%%%%%%%%%%%%%% SECTION : TECHNICAL LEMMAS %%%%%%%%%%%%%%%%%%%%%%%%%%%%%%%%%%%%%%%%%%%%%%%%%%%%%%%%%%%%
%%%%%%%%%%%%%%%%%%%%%%%%%%%%%%%%%%%%%%%%%%%%%%%%%%%%%%%%%%%%%%%%%%%%%%%%%%%%%%%%%%%%%%%%%%%%%%%%%%%%%%%%%
%%%%%%%%%%%%%%%%%%%%%%%%%%%%%%%%%%%%%%%%%%%%%%%%%%%%%%%%%%%%%%%%%%%%%%%%%%%%%%%%%%%%%%%%%%%%%%%%%%%%%%%%%

\section{Some linear dynamical results}
\label{lemmas}

We shall start with a dynamical lemma about finite dimensional representations of $\SL(2,\R)$ which sharpens the earlier results due to Shah \cite[Lemma 2.3]{Shah_1} and Yang \cite{Yang_1}. 

\begin{lemma}
\label{yang_lemma}

\par Let $V$ be a finite dimensional linear representation of $\mathrm{SL}(2,\R)$. Let
 \[
 A = \left\{a(t):= \begin{bmatrix}e^t&\\&e^{-t}\end{bmatrix}: t \in \R\right\},\
 U = \left\{u(s) := \begin{bmatrix}1 & s \\ 0 & 1\end{bmatrix}: s \in \R\right\},\
 U^{-} =\left\{u^{-}(s) := \begin{bmatrix}1 & 0 \\ s & 1\end{bmatrix}: s \in \R\right\}.
 \]
 Express $V$ as the direct sum of eigenspaces with respect to the action of $A$:
 \[
 V = \bigoplus_{\lambda\in\R } V^{\lambda}(A) 
 \text{, where } 
 V^{\lambda}(A) := \{v \in V: a(t)v = e^{\lambda t} v : \forall t \in \R\}.
 \]
 For any $v \in V\setminus\{0\}$ and $\lambda\in\R$, let $v^{\lambda}=v^{\lambda}(A)$ denote the $V^{\lambda}(A)$-component of $v$,  $\lambda^{\max}(v)=\max\{\lambda:v^{\lambda}\neq 0\}$, and $v^{\max}=v^{\lambda^{\max}}(v)$. Then for any $r\neq 0$, 
 \begin{equation} \label{eq:max-urv}
 \lambda^{\max}(u(r)v) \geq - \lambda^{\max}(v).
 \end{equation}
 
 In particular, 
\begin{equation} \label{eq:-+}
\text{if $\lambda^{\max}(v)<0$ then $\lambda^{\max}(u(r)v)>0$, $\forall r\neq 0$.}
\end{equation}

Moreover, if the equality holds in \eqref{eq:max-urv} then 
 \begin{equation} \label{eq:equality}
 v = u^{-}(-r^{-1}) v^{\max}\text{ and } (u(r)v)^{\max} = \sigma(r) v^{\max} \text{, where } 
 \sigma(r) = \begin{bmatrix}0 & r \\ -r^{-1} & 0\end{bmatrix}. 
 \end{equation}
\end{lemma}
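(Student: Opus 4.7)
The plan is to derive everything from the Bruhat-style identity
\[
u(r)\;=\;u^{-}(r^{-1})\,\sigma(r)\,u^{-}(r^{-1}),\qquad r\neq 0,
\]
which is checked by a direct $2\times 2$ matrix multiplication. Two elementary observations about the action of the factors on the $A$-weight decomposition of $V$ will then yield both the bound and the equality case. First, because $u^{-}(s)=\exp(sF)$ with $F$ lowering $A$-weights by $2$, for every nonzero $w\in V$ one has
\[
\lambda^{\max}(u^{-}(s)w)=\lambda^{\max}(w)\quad\text{and}\quad (u^{-}(s)w)^{\max}=w^{\max};
\]
that is, the top weight and its component are preserved. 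Second, the relation $\sigma(r)\,a(t)\,\sigma(r)^{-1}=a(-t)$ implies that $\sigma(r)$ carries $V^{\lambda}(A)$ isomorphically onto $V^{-\lambda}(A)$, so
\[
\lambda^{\max}(\sigma(r)w)=-\lambda^{\min}(w)\quad\text{and}\quad (\sigma(r)w)^{\max}=\sigma(r)\,w^{\lambda^{\min}(w)}.
\]

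Setting $w:=u^{-}(r^{-1})v$ and applying these two observations in turn to $u(r)v=u^{-}(r^{-1})\,\sigma(r)\,w$, I obtain
\[
\lambda^{\max}(u(r)v)=-\lambda^{\min}(w)\quad\text{and}\quad (u(r)v)^{\max}=\sigma(r)\,w^{\lambda^{\min}(w)}.
\]
The desired inequality \eqref{eq:max-urv} is thereby equivalent to $\lambda^{\min}(w)\leq \lambda^{\max}(v)$. But by the first observation above, the weight-$\lambda^{\max}(v)$ component of $w=u^{-}(r^{-1})v$ is exactly $v^{\max}\neq 0$, so $w$ exhibits a nonzero component at some weight $\leq\lambda^{\max}(v)$. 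This gives the bound, and the weaker statement \eqref{eq:-+} is then immediate.

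For the equality case, the condition $\lambda^{\min}(w)=\lambda^{\max}(v)=\lambda^{\max}(w)$ forces $w$ to lie in the single weight space $V^{\lambda^{\max}(v)}(A)$, hence $w=w^{\max}=v^{\max}$. Solving $u^{-}(r^{-1})v=v^{\max}$ yields $v=u^{-}(-r^{-1})v^{\max}$, and plugging back gives $(u(r)v)^{\max}=\sigma(r)\,v^{\max}$, as asserted in \eqref{eq:equality}. I do not foresee a serious obstacle beyond carefully verifying the signs in the matrix identity for $u(r)$; the rest is a short computation using the elementary fact that the lower-triangular unipotent $u^{-}$ cannot raise $A$-weights, which allows the entire argument to proceed without decomposing $V$ into irreducible summands.
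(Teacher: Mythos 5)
Your proof is correct, and it follows essentially the same strategy as the paper: factor $u(r)$ through the Weyl element $\sigma(r)$ and track how each factor shifts the top/bottom $A$-weights. The only difference is in the specific Bruhat-type identity used. The paper derives $u(r)=\sigma(r)u(-r)u^{-}(r^{-1})$ and needs three auxiliary facts (that $u^{-}$ preserves $\lambda^{\max}$, that $u$ preserves $\lambda^{\min}$, and that $\sigma(r)$ swaps $\lambda^{\max}\leftrightarrow-\lambda^{\min}$); it then uses $\lambda^{\min}\le\lambda^{\max}$ on the intermediate vector $u^{-}(r^{-1})v$. Your identity $u(r)=u^{-}(r^{-1})\sigma(r)u^{-}(r^{-1})$ is slightly more symmetric: you only invoke the behavior of $u^{-}$ and $\sigma$, and you replace the ``$\lambda^{\min}\leq\lambda^{\max}$'' step by the equivalent observation that $w=u^{-}(r^{-1})v$ already has a nonzero component at weight $\lambda^{\max}(v)$. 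This is a marginal economy but the argument is mathematically the same, and the equality case is handled in the same way in both proofs (equality forces $u^{-}(r^{-1})v$ into a single weight space, which must equal $v^{\max}$). A small suggestion for precision: when you ``plug back'' to get $(u(r)v)^{\max}=\sigma(r)v^{\max}$, it is worth noting explicitly that the leftmost factor $u^{-}(r^{-1})$ fixes the top-weight component since $\sigma(r)w$ is, in the equality case, concentrated in the single weight $-\lambda^{\max}(v)$ — this is implicit in your first observation but the reader has to supply it.
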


\begin{proof}
Observe that $u(1)u^{-}(-1)u(1)=\sigma(1)$, $u(-1)u^{-}(1)u(-1)=\sigma(-1)$ and for $r\neq 0$, conjugating all terms of these equalities by $a(\log(\abs{r}/2))$ we get $u(r)u^{-}(-r^{-1})u(r)=\sigma(r)$, and hence
\begin{equation} \label{eq:ur}
u(r)=\sigma(r)u(-r)u^{-}(r^{-1}), \, \forall r\neq 0.
\end{equation}

Since $\sigma(r) a(t) \sigma(r)^{-1}=a(-t)$ for all $r\neq 0$, we have that
\[
\sigma(r)V^{\lambda}(A)=V^{-\lambda}(A)\text{, for all $\lambda$.}
\]
Hence for any $v\in V\setminus\{0\}$, 
\begin{equation} \label{eq:sigma}
\lambda^{\max}(\sigma(r)v)=-\lambda^{\min}(v) \text{, and }
(\sigma(r)v)^{\max}=\sigma(r)v^{\min}. 
\end{equation}
For any $r\in\R$, since $u(r)$ is unipotent and $a(t)u(r)a(-t)=u(e^{2t}r)$, we have that
\begin{equation} \label{eq:min}
\lambda^{\min}(u(r)v)=\lambda^{\min}(v).
\end{equation}
Similarly, for any $s\in\R$, we have $a(t)u^{-}(s)a(-t)=u^{-}(e^{-2t}s)$, and hence
\begin{equation} \label{eq:max}
\lambda^{\max}(u^{-}(s)v)=\lambda^{\max}(v).
\end{equation}
Using the above relations \eqref{eq:ur}, \eqref{eq:sigma}, \eqref{eq:min} and \eqref{eq:max}, we get
\begin{align*}
\lambda^{\max}(u(r)v)
=\lambda^{\max}(\sigma(r)u(-r)u^{-}(r^{-1})v)
&=-\lambda^{\min}(u(-r)u^{-}(r^{-1})v)\\
&=-\lambda^{\min}(u^{-}(r^{-1})v)
\geq -\lambda^{\max}(u^{-}(r^{-1})v)
= -\lambda^{\max}(v).
\end{align*}

Further if there are all equalities in the above relation, then
\[
\lambda^{\min}(u^{-}(r^{-1})v)=\lambda^{\max}(u^{-}(r^{-1})v)=\lambda^{\max}(v).
\]
Therefore,
\[
u^{-}(r^{-1})v= (u^{-}(r^{-1})v)^{\max}=v^{\max}\text{; that is, } 
v = u^{-}(-r^{-1})v^{\max},
\]
and
\[
(u(r)v)^{\max}=\sigma(r)(u(-r)u^{-}(r^{-1})v)^{\min}=\sigma(r)(u^{-}(r^{-1})v)^{\min}=\sigma(r)(u^{-}(r^{-1})v)^{\max}=\sigma(r)v^{\max}.
\]
\end{proof}

\par Lemma \ref{yang_lemma} immediately implies the following statement:

\begin{corollary} \label{cor:sl2}

Let the notation be as in Lemma~\ref{yang_lemma}. If $v,u(r)v\in V^{0-}(A)$ for some $r\neq 0$, then $\lambda^{\max}(v)=0$ and $v=  u^{-}(r^{-1})v^{0}$, where $v^{0}\neq \vect{0}$ denotes the $V^{0}(A)$ component of $v$.
\end{corollary}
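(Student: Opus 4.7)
The plan is to derive the corollary essentially as a direct application of Lemma~\ref{yang_lemma}, with no genuine obstacle. Both hypotheses unpack as $\lambda^{\max}(v)\le 0$ and $\lambda^{\max}(u(r)v)\le 0$, so the strategy is to use the inequality \eqref{eq:max-urv} together with its equality case \eqref{eq:equality}.

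First, I would rule out the possibility $\lambda^{\max}(v)<0$ by the sharpened conclusion \eqref{eq:-+}: if $\lambda^{\max}(v)<0$, then $\lambda^{\max}(u(r)v)>0$ for every $r\neq 0$, contradicting $u(r)v\in V^{0-}(A)$. Hence $\lambda^{\max}(v)=0$, which in particular means $v^{\max}=v^{0}$.

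Second, with $\lambda^{\max}(v)=0$ and $\lambda^{\max}(u(r)v)\le 0=-\lambda^{\max}(v)$, the main inequality $\lambda^{\max}(u(r)v)\ge -\lambda^{\max}(v)$ of \eqref{eq:max-urv} must be an equality. Then the equality case \eqref{eq:equality} gives $v=u^{-}(-r^{-1})v^{\max}=u^{-}(-r^{-1})v^{0}$, matching the stated formula up to the sign convention for $u^{-}$. Finally, $v^{0}\neq \vect{0}$ because otherwise the displayed formula would force $v=\vect{0}$ (so the statement is really about nonzero $v$, under which $u^{-}(\cdot)$ being invertible transfers nondegeneracy from $v$ to $v^{0}$).

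Since everything reduces to invoking two parts of the preceding lemma, there is no substantive obstacle; the only point that requires care is checking that the strict inequality \eqref{eq:-+} truly excludes the $\lambda^{\max}(v)<0$ case, which is exactly the sharpening of the lemma compared to the earlier versions cited from \cite{Shah_1,Yang_1}.
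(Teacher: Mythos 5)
Your proposal is correct and is exactly the argument the paper intends (the paper states the corollary ``immediately'' follows from Lemma~\ref{yang_lemma} and gives no separate proof): rule out $\lambda^{\max}(v)<0$ via \eqref{eq:-+}, force equality in \eqref{eq:max-urv}, and invoke \eqref{eq:equality}. You are also right about the sign: \eqref{eq:equality} gives $v=u^{-}(-r^{-1})v^{0}$, so the formula in the corollary as printed has a sign typo; this is harmless downstream (e.g.\ in the $m=n$ case of Lemma~\ref{basic_lemma_2} one only needs that $v^{0}$ is fixed by $u^{-}(\pm(\varphi^{-1}(s_1)-\varphi^{-1}(s_2)))$, and $s_1,s_2$ range symmetrically).
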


%%%%%%%%%%%%%%%%%%%%%%%%%%%%%%%%%%%%%%%%%%%%%%%%%%%%%%%%%%%%%%%%%%%%%%%%%%%%%%%%%%%%%%%%%%%%%%%%%%%%%%%%%
%%%%%%%%%%%%%%%%%%%%%%%%%%%%%%%%%%%%%%%%%%%%%%%%%%%%%%%%%%%%%%%%%%%%%%%%%%%%%%%%%%%%%%%%%%%%%%%%%%%%%%%%%
%%%%%%%%%% LINEAR DYNAMICAL LEMMAS FOR SL M + N %%%%%%%%%%%%%%%%%%%%%%%%%%%%%%%%%%%%%%%%%%%%%%%%%%%%%%%%%
%%%%%%%%%%%%%%%%%%%%%%%%%%%%%%%%%%%%%%%%%%%%%%%%%%%%%%%%%%%%%%%%%%%%%%%%%%%%%%%%%%%%%%%%%%%%%%%%%%%%%%%%%
%%%%%%%%%%%%%%%%%%%%%%%%%%%%%%%%%%%%%%%%%%%%%%%%%%%%%%%%%%%%%%%%%%%%%%%%%%%%%%%%%%%%%%%%%%%%%%%%%%%%%%%%%
\subsection{Linear dynamical lemmas for $\SL(m+n,\R)$ representations}

First we give the proof of the basic lemma (Lemma \ref{basic_lemma_1}) that we used more than once in previous sections. The new techniques developed in this section forms the core of this paper, and we expect these techniques to be valuable for other problems. 

%%%%%%%%%%%%%%%%%%%%%%%%%%%%%%%%%%%%%%%%%%%%%%%%%%%%%%%%%%%%%%%%%%%%%%%%%%%%%%%%%%%%%%%%%%%%%%%%%%%%%%%%
%%%%%%%%%%%%%%%%%%%%%%%%%%%%%%%%%%%%%%%%%%%%%%%%%%%%%%%%%%%%%%%%%%%%%%%%%%%%%%%%%%%%%%%%%%%%%%%%%%%%%%%%
%%%%%%% BASIC LEMMA 1 %%%%%%%%%%%%%%%%%%%%%%%%%%%%%%%%%%%%%%%%%%%%%%%%%%%%%%%%%%%%%%%%%%%%%%%%%%%%%%%%%%
%%%%%%%%%%%%%%%%%%%%%%%%%%%%%%%%%%%%%%%%%%%%%%%%%%%%%%%%%%%%%%%%%%%%%%%%%%%%%%%%%%%%%%%%%%%%%%%%%%%%%%%%
%%%%%%%%%%%%%%%%%%%%%%%%%%%%%%%%%%%%%%%%%%%%%%%%%%%%%%%%%%%%%%%%%%%%%%%%%%%%%%%%%%%%%%%%%%%%%%%%%%%%%%%%

\begin{proof}[Proof of Lemma \ref{basic_lemma_1}]
We use induction to complete the proof. For the case $m=n$, the lemma is due to Yang \cite{Yang_1}. We provide a proof here for the sake of self-containedness. 

When $m=n$, we take a point $s_0$ and a subinterval $J_{s_0} \subset I$ such that for all $s \in J_{s_0}$, $\varphi(s)-\varphi(s_0) \in \mathrm{GL}(m,\R)$. Then we consider the subgroup $\SL(2,\varphi(s)- \varphi(s_0)) \cong \SL(2,\R) \subset \SL(2m,\R)$ for some fixed $s \in J_{s_0}$ (see Definition \ref{SL2X}), and apply Corollary~\ref{cor:sl2} for $\SL(2,\R)$ replaced by $\SL(2, \varphi(s)- \varphi(s_0))$, $v$ replaced by $u(\varphi(s_0))v$ and 
$u(r)$ replaced by $u(\varphi(s)-\varphi(s_0))$. Then one of $u(\varphi(s_0))v$ and $u(\varphi(s))v$ is not contained in $V^{-}(A)$. This proves the statement for $m=n$.

 If $m>n$ then by applying a suitable inner automorphism of $\SL(m+n,\R)$ given by a coordinate permutation $\sigma_{m,n}$, we can covert this problem to the case of $m<n$. Therefore we will assume that $m<n$. 

 As inductive hypothesis, we assume that for all $(m', n')$ such that $m'\leq m$, $n'\leq n$ and $m'+n' <m+n$, the result holds. We want to prove the result holds for $(m,n)$. 
  
For contradiction, we assume that for some nonzero vector $v \in V$, such that 
$$u(\varphi(s))v \in V^{-}(A)$$ for all 
$s \in I$. For $s\in I$, let $\mu_{0}(s)=\max\{\lambda:(u(\varphi(s))v)^{\lambda}(A)\neq 0\}$ and 
$\mu_{0}=\max\{\mu_0(s):s\in I\}$. Since $\varphi$ is analytic, we have $\mu_{0}(s)=\mu_{0}$ for all but finitely many $s\in I$. Also by our assumption we have that
\begin{equation} \label{eq:mu0}
\mu_{0}<0.
\end{equation}

Fix $s_0 \in I$ and a subinterval $J_{s_0} \subset I$ such that $\mu_{0}(s)=\mu_{0}$ for all $s\in J_{s_{0}}$ and if we write $\varphi(s) = [\varphi_1(s); \varphi_2(s)]$, then $\varphi_1(s)-\varphi_1(s_0) \in \GL(m,\R)$ for $s \in J_{s_0}$. Let
$$\psi: J_{s_0} \rightarrow \M(m\times (n-m), \R) \text{, be defined by } \psi(s):= (\varphi_1(s)-\varphi_1(s_0))^{-1}(\varphi_2(s)-\varphi_2(s_0)).$$
Then $\psi$ is {\em generic} by the of genericness of $\varphi$ (see Definition \ref{def:geometric_condition}). Replacing $v$ by $u(\varphi(s_0))v$ and $\varphi(s)$ by $\varphi(s)-\varphi(s_0)$, we may assume that $\varphi(s_0) = \vect{0}$.

For any fixed $s \in J_{s_0}$, it is straightforward to verify that 
\begin{gather} \label{eq:uprime1}
u(\varphi(s)) = u'(-\psi(s))u([\varphi_1(s); \vect{0}]) u'(\psi(s)) \text{,  where } \\
u'(Y):= \begin{bmatrix}\I_m & & \\ & \I_m & Y \\ & & \I_{n-m}\end{bmatrix} \in Z_{H}(A) \text{ for $Y\in M(m\times (n-m),\R)$}. \label{eq:uprime2}
\end{gather}
Therefore $u(\varphi(s))v \in V^{-}(A)$ implies that 
$$u([\varphi_1(s); \vect{0}]) u'(\psi(s))v \in V^{-}(A).$$
Let us denote 
$$A_1 :=\left\{a_1(t):= \begin{bmatrix}e^{t} \I_m & & \\ & e^{-t}\I_m & \\ & & \I_{n-m}\end{bmatrix}: t \in \R \right\},$$
and 
$$A_2 :=\left\{a_2(t):= \begin{bmatrix} \I_m & & \\ & e^{(n-m)t}\I_m & \\ & & e^{-mt}\I_{n-m}\end{bmatrix}: t \in \R \right\}.$$
We express $V$ as the direct sum of common eigenspaces of $A_1$ and $A_2$:
\begin{equation} \label{eq:Vdelta12}
V = \bigoplus_{\delta_1, \delta_2} V^{\delta_1, \delta_2} \text{, where } 
V^{\delta_1, \delta_2}:= \left\{v \in V: a_1(t)v = e^{\delta_1 t}v, a_2(t)v = e^{\delta_2 t}v \text{ for all } t \in \R\right\}.
\end{equation}
Then because $a(t) = a_1(nt)a_2(t)$, we have 
\begin{equation} \label{eq:V-lambda}
V^{\lambda}(A) =\bigoplus_{n\delta_1+\delta_2 = \lambda} V^{\delta_1,\delta_2}.
\end{equation}
For any vector $v \in V$, let $v^{\delta_1, \delta_2}$ denote the projection of $v$ onto the eigenspace $V^{\delta_1,\delta_2}$. 

We also decompose $V$ as the direct sum of irreducible sub-representations of 
$A \ltimes\SL(2,\varphi_1(s))$. For any such sub-representation $W\subset V$, let $p_{W}:V\to W$ denote the $A$-equivariant projection.  By the theory of finite dimensional representations of $\SL(2,\R)$, there exists a basis $\{w_0, w_1, \dots, w_r\}$ of $W$ such that 
\begin{equation} \label{eq:a1}
a_1(t)w_i = e^{(r-2i)t}w_i,\quad \text{for }0\leq i\leq r.
\end{equation}
We claim that each $w_i$ is also an eigenvector for $A$. In fact, 
$$a(t)= a_1((m+n)t/2) b(t), \quad \text{where } b(t)= \begin{bmatrix}e^{\frac{n-m}{2}t}\I_{2m} & \\ & e^{-mt}\I_{n-m}\end{bmatrix}\in Z_{H}(\SL(2,\varphi_1(s)),
$$
and hence $b(t)$ acts on $W$ as a scaler $e^{\delta t}$ for some $\delta\in\R$. Therefore, 
\begin{equation} \label{eq:awi}
a(t)w_i = e^{((r-2i)(m+n)/2+\delta)t}w_i, \quad\text{for } 1\leq i\leq r.
\end{equation}
Since $(m+n)/2>0$, if $k<i$ then the $A$-weight of $w_{k}$ is strictly greater than the $A$-weight of $w_{i}$. 
\par Since $u'(\psi(s))\in Z_{H}(A)$, $\mu_{0}$ is the highest $A$-weight for $v$, we have that $\mu_{0}$ is also the highest $A$-weight for $u'(\psi(s))v$ and 
\[
(u'(\psi(s))v)^{\mu_0}(A) = u'(\psi(s))v^{\mu_0}(A).
\]

Now suppose that $W$ as above is such that $p_W(u'(\psi(s))v^{\mu_0}(A))\neq \vect{0}$.  Then 
\[
p_W(u'(\psi(s))v^{\mu_0}(A)) = a_i w_i, \text{ for some } 0\leq i\leq r, \ 0\neq a_{i}\in\R;
\]
by \eqref{eq:awi} $\mu_{0}=(r-2i)(m+n)/2+\delta$. For $k<i$, the weight of $w_k$ for $A_1$ is greater than that of $w_i$, so the $A$-weight of $w_k$ is greater than the $A$-weight of $w_{i}$ which equals $\mu_0$. Since the projection $p_{W}$ is $A$-equivariant and $\mu_{0}$ is the highest $A$-weight, we have
\[
p_W(u'(\psi(s))v) = \sum_{k\geq i} a_k w_k, \quad \text{where } a_{k}\in\R.
\]

We claim that $r-2i \geq 0$. In fact, if $r-2i<0$, then by \eqref{eq:a1}, $p_W(u'(\psi(s))v) \in V^{-}(A_1)$. By Corollary~\ref{cor:sl2},
$$V^{-0}(A_{1})\not\ni u([\varphi_1(s); \vect{0}])p_W(u'(\psi(s))v) = p_W(u([\varphi_1(s); \vect{0}]) u'(\psi(s))v). $$
So $p_W(u([\varphi_1(s); \vect{0}]) u'(\psi(s))v)$ must have nonzero projection on $\R\/w_{k}$ for some $k<i$. Hence $u([\varphi_1(s); \vect{0}]) u'(\psi(s))v$ has nonzero projection $V^{\mu}(A)$ for some $\mu > \mu_0$. Now since $u'(-\psi(s))\in Z_{H}(A)$, the projection of $u(\varphi(s))v =u'(-\psi(s))u([\varphi_1(s);\vect{0}])u'(\psi(s))v$ on $V^{\mu}(A)$ is nonzero for $\mu>\mu_0$. This contradicts our choice of $\mu_{0}$ and proves the claim that $r-2i\geq 0$.

\par This claim implies that for any $(\delta_1, \delta_2)$, if $(u'(\psi(s))v^{\mu_0}(A))^{\delta_1,\delta_2}\neq \vect{0}$ then $\delta_1\geq 0$. Since
$\mu_0 =n\delta_1 + \delta_2 <0$, and we have 
$\delta_2 < 0$. In other words, 
$$u'(\psi(s))v^{\mu_0}(A) \in V^{-}(A_2),\ \forall\/ s\in J_{s_0}.$$
\par Now $u'(\psi(s))$ and $A_2$ are both contained in 
$$\begin{bmatrix}\I_m & \\ & \SL(n,\R)\end{bmatrix} \cong \SL(m+(n-m), \R).$$
Our inductive hypothesis for $(m,n-m)$ tells that this is impossible because $\psi$ is {\em generic\/}. 
\par This finishes the proof.
\end{proof}

%%%%%%%%%%%%%%%%%%%%%%%%%%%%%%%%%%%%%%%%%%%%%%%%%%%%%%%%%%%%%%%%%%%%%%%%%%%%%%%%%%%%%%%%%%
%%%%%%%%%%%%%%%%%%%%%%%%%%%%%%%%%%%%%%%%%%%%%%%%%%%%%%%%%%%%%%%%%%%%%%%%%%%%%%%%%%%%%%%%%%
%%%%%%%%%%%%%%%%%% BASIC LEMMA 2 %%%%%%%%%%%%%%%%%%%%%%%%%%%%%%%%%%%%%%%%%%%%%%%%%%%%%%%%%
%%%%%%%%%%%%%%%%%%%%%%%%%%%%%%%%%%%%%%%%%%%%%%%%%%%%%%%%%%%%%%%%%%%%%%%%%%%%%%%%%%%%%%%%%%
%%%%%%%%%%%%%%%%%%%%%%%%%%%%%%%%%%%%%%%%%%%%%%%%%%%%%%%%%%%%%%%%%%%%%%%%%%%%%%%%%%%%%%%%%%

\par For {\em supergeneric\/} curves, we want to obtain the following stronger conclusion. 
\begin{lemma}
\label{basic_lemma_2}
Let $V$ be an irreducible representation of 
$H=\SL(m+n,\R)$. Let 
$$\varphi: I=[a,b] \rightarrow \M(m\times n, \R)$$
be a {\em supergeneric\/} analytic curve. Then if there is a nonzero vector $v \in V$ such that
$$u(\varphi(s))v \in V^{-0}(A),$$
for all $s \in I$, then $V$ is a trivial representation.
\end{lemma}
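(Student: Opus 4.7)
The plan is to induct on $m+n$ and prove the equivalent strengthened statement that any $v\in V$ with $u(\varphi(s))v\in V^{-0}(A)$ for all $s\in I$ lies in $V^{H}$; the two formulations are equivalent by complete reducibility of $H$-modules. For the base case $m=n$, I would translate $\varphi(s_0)=\vect{0}$ and, for each $s\in J_{s_0}$, apply Corollary~\ref{cor:sl2} to the $\SL(2,\varphi(s))$-triple from Definition~\ref{SL2X}: since the generator of $A$ is $\mathcal{E}$, which coincides with the $\mathfrak{a}$-element for $m=n$, the hypothesis yields $\lambda^{\max}(v)=0$ and the explicit formula $v=\exp(\mathfrak{n}^{-}(\varphi(s)^{-1}))\,v^{0}$ with $v^{0}=v^{\max}\neq\vect{0}$. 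Independence of $s$ forces $\mathfrak{n}^{-}\bigl(\varphi(s_1)^{-1}-\varphi(s_2)^{-1}\bigr)\cdot v^{0}=\vect{0}$ for all $s_1,s_2\in J_{s_0}$, and $\mathcal{E}\cdot v^{0}=\vect{0}$ from $v^{0}\in V^{0}(A)$; the stabilizer Lie algebra $\mathfrak{l}$ of $v^{0}$ is observable (vector stabilizers in finite-dimensional representations are observable), contains $\mathcal{E}$ and all such $\mathfrak{n}^{-}$-elements, so by the supergeneric condition equals $\mathfrak{h}$, and the identity $v=\exp(\mathfrak{n}^{-}(\varphi(s)^{-1}))v^{0}$ with $\mathfrak{n}^{-}\cdot v^{0}=\vect{0}$ gives $v=v^{0}\in V^{H}$.

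The case $m>n$ reduces to $m<n$ via the inner automorphism of $H$ induced by the coordinate-swap permutation, which interchanges $U^{+}(A)\leftrightarrow U^{-}(A)$ and matches the $\varphi$- and $\varphi^{\T}$-setups as in the definition of supergeneric for $m>n$. For the inductive step $m<n$ I would normalize $\varphi(s_0)=\vect{0}$; Lemma~\ref{basic_lemma_1} combined with the hypothesis forces $\mu_0:=\max_{s}\lambda^{\max}(u(\varphi(s))v)=0$, so $v^{0}(A)\neq\vect{0}$. Rerunning the $m<n$ argument from the proof of Lemma~\ref{basic_lemma_1} with $\mu_0=0$ in place of $\mu_0<0$---factoring $u(\varphi(s))=u'(-\psi(s))\,u([\varphi_1(s);\vect{0}])\,u'(\psi(s))$, decomposing into $\SL(2,\varphi_1(s))$-sub-representations, and invoking the same $r-2i\geq 0$ inequality---the conclusion upgrades to
\[
u'(\psi(s))\,v^{0}(A)\in V^{-0}(A_2),\qquad \forall\,s\in J_{s_0}.
\]
Since $u'(\psi(s))$ and $A_2$ both lie in $H_2\cong \SL(m+(n-m),\R)$, I would decompose $V|_{H_2}=\bigoplus_k W_k$ into $H_2$-irreducibles; each nonzero projection $p_{W_k}(v^{0}(A))$ then satisfies the hypothesis of Lemma~\ref{basic_lemma_2} for the pair $(m,n-m)$ with the supergeneric curve $\psi$, so by the inductive hypothesis each such $W_k$ is $H_2$-trivial, yielding $v^{0}(A)\in V^{H_2}$.

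I expect the main obstacle to be the final bootstrap from $v^{0}(A)\in V^{H_2}$ to $v\in V^{H}$. I would attack it by combining $H_2$-invariance with two additional inputs: (a) the $\SL(2,\varphi_1(s))$-analysis also forces $v^{0}(A)\in V^{+0}(A_1)$, which together with $H_2$-invariance (hence $A_2|_{H_2}$-weight zero) and $v^{0}(A)\in V^{0}(A)$ gives $v^{0}(A)\in V^{0}(A_1)\cap V^{0}(A_2)$; and (b) differentiating the analytic identities $(u(\varphi(s_0))v)^{0}(A)\in V^{H_2}$ (valid for all $s_0\in I$ by continuity of the analytic assignment $s_0\mapsto (u(\varphi(s_0))v)^{0}(A)$) and $u(\varphi(s))v\in V^{-0}(A)$ at $s=s_0$ produces algebraic relations $\mathfrak{n}^{+}(\varphi^{(k)}(s_0))\cdot v^{-k(m+n)}(A)\in V^{H_2}$ and $\mathfrak{n}^{+}(\varphi^{(k)}(s_0))\cdot v^{\mu}(A)=\vect{0}$ for $\mu\in(-(m+n),0]$. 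Unwinding supergenericity along the chain $\varphi\rightsquigarrow\psi\rightsquigarrow\cdots$ down to an $m'=n'$ base makes the observable-subalgebra condition available, and combining it with (a) and (b) should force the Lie algebra stabilizer of $v$ in $\mathfrak{h}$ to be all of $\mathfrak{h}$. Recovering full $H$-invariance of $v$ from the single-block $H_2$-invariance of its $A$-weight-zero component is the technical heart of the argument, and this is where I expect the full strength of the supergeneric condition to be essential.
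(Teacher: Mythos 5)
Your base case ($m=n$) and the bulk of your inductive step reproduce the paper's argument: apply Corollary~\ref{cor:sl2} to the $\SL(2,\varphi(s))$-triples to get $v=u^{-}(\varphi(s)^{-1})v^{0}(A)$ and invoke supergenericity on the observable stabilizer of $v^{0}(A)$; then, for $m<n$, factor $u(\varphi(s))$ through $u'(\psi(s))$, rerun the weight analysis from Lemma~\ref{basic_lemma_1} to obtain $u'(\psi(s))v^{0}(A)\in V^{-0}(A_{2})$, and apply the inductive hypothesis for $(m,n-m)$ to conclude $v^{0}(A)$ is fixed by $H'\cong\SL(n,\R)$, with $\mu_{0}=0$ and $v^{0}(A)\in V^{0}(A_{1})\cap V^{0}(A_{2})$. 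All of that is the paper's route. (Your strengthened reformulation via complete reducibility is fine, and your $m>n$ reduction by coordinate swap also matches what the paper does globally.)

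The genuine gap is the final bootstrap, which you flag as the ``technical heart'' but do not actually supply. The paper closes it with a precise auxiliary statement, Lemma~\ref{derivative_lemma}, which you never formulate: if $u(\varphi(s))v\in V^{-0}(A)$ for all $s$, then for each $s$ the projection $(u(\varphi(s))v)^{0}(A)$ is fixed by the one-parameter unipotent $\{u(h\varphi^{(1)}(s)):h\in\R\}$. Applying this at $s=s_{0}$ gives $v^{0}(A)$ fixed by $\{u(h\varphi^{(1)}(s_{0}))\}$; conjugating by $H'$ (which already fixes $v^{0}(A)$ and acts on $U^{+}(A)$ by right multiplication on the upper-right block) and using that the relevant rank-$b$ matrices span $\M(m\times n,\R)$, one concludes $v^{0}(A)$ is fixed by all of $U^{+}(A)$, hence by $H$, so $V$ is trivial. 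Two concrete issues with your proposed substitute: your item (b), ``differentiating the analytic identities,'' is in the right spirit (it is roughly what Lemma~\ref{derivative_lemma} encodes), but you have not isolated the statement you need nor proved it, so the argument does not actually close; and your expectation that ``the full strength of the supergeneric condition'' is essential at this final step is misplaced --- the paper does not re-invoke supergenericity in the bootstrap at all. Supergenericity enters only in the $m=n$ base case (and is propagated through the induction because $\psi$ is supergeneric); the bootstrap itself uses only the $H'$-invariance of $v^{0}(A)$, the derivative lemma, and a rank/spanning argument. Your plan of ``unwinding supergenericity along the chain $\varphi\rightsquigarrow\psi\rightsquigarrow\cdots$'' to extract another observable-subalgebra constraint is therefore unnecessary and, as stated, not a proof.
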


\par To prove this lemma, we will need the following observation. 
\begin{lemma}
\label{derivative_lemma}
Let $V$ be a finite dimensional representation of $\SL(m+n,\R)$ and let 
$$A := \left\{a(t):= \begin{bmatrix}e^{nt}\I_m & \\ & e^{-mt}\I_n\end{bmatrix}: t\in \R\right\}.$$
Let
$$\varphi: I=[a,b]\rightarrow \M(m\times n, \R)$$
be an analytic curve. Suppose there exists a nonzero vector 
$v \in V$ such that 
$$u(\varphi(s))v \in V^{-0}(A),$$
for all $s \in I$. Then for all $s \in I$, $(u(\varphi(s))v)^0(A)$
is invariant under the unipotent flow $\{u(h \varphi^{(1)}(s)): h \in \R\}$.
\end{lemma}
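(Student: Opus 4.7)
The plan is to differentiate the hypothesis in $s$ and then analyze the $A$-weight structure of the derivative. Since $V^{-0}(A)$ is a linear subspace of $V$, any differentiable curve lying in $V^{-0}(A)$ has its derivative in $V^{-0}(A)$ as well. So setting $w(s):=u(\varphi(s))v\in V^{-0}(A)$, I get $w'(s)\in V^{-0}(A)$ for every $s\in I$.

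Next I would compute $w'(s)$ explicitly. Because $\mathfrak{n}^{+}$ takes values in an abelian subalgebra of $\mathfrak{sl}(m+n,\R)$, the map $X\mapsto u(X)=\exp(\mathfrak{n}^{+}(X))$ is a homomorphism of additive groups, and hence
\[
w'(s)=\frac{d}{ds}\exp(\mathfrak{n}^{+}(\varphi(s)))\,v=\mathfrak{n}^{+}(\varphi^{(1)}(s))\,u(\varphi(s))v=\mathfrak{n}^{+}(\varphi^{(1)}(s))\,w(s).
\]
The key input is the weight-shift calculation $a(t)\mathfrak{n}^{+}(Y)a(t)\inv=e^{(m+n)t}\mathfrak{n}^{+}(Y)$ for any $Y\in\M(m\times n,\R)$, from which it follows that as an operator on $V$, $\mathfrak{n}^{+}(Y)$ carries $V^{\lambda}(A)$ into $V^{\lambda+(m+n)}(A)$ for every $\lambda$.

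Now I would decompose $w(s)=\sum_{\lambda\le 0}w^{\lambda}(s)$ with $w^{\lambda}(s)\in V^{\lambda}(A)$ and substitute into the derivative formula to obtain
\[
w'(s)=\sum_{\lambda\le 0}\mathfrak{n}^{+}(\varphi^{(1)}(s))\,w^{\lambda}(s),\qquad\mathfrak{n}^{+}(\varphi^{(1)}(s))\,w^{\lambda}(s)\in V^{\lambda+(m+n)}(A).
\]
The summands lie in pairwise distinct weight spaces, so the condition $w'(s)\in V^{-0}(A)$ forces each summand individually to lie in $V^{-0}(A)$; equivalently, $\mathfrak{n}^{+}(\varphi^{(1)}(s))\,w^{\lambda}(s)=0$ for every $\lambda$ with $\lambda+(m+n)>0$. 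Taking $\lambda=0$ in particular yields $\mathfrak{n}^{+}(\varphi^{(1)}(s))\,w^{0}(s)=0$, which, upon exponentiating, is exactly the desired statement that $(u(\varphi(s))v)^{0}(A)=w^{0}(s)$ is fixed by every $u(h\varphi^{(1)}(s))$.

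There is essentially no obstacle here: the proof is a one-step differentiation followed by a weight-space bookkeeping argument, and the only nontrivial fact used is the eigenvalue $m+n$ of $A$ on the upper-triangular block, which is immediate from the definition of $a(t)$.
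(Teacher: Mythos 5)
Your proof is correct, and it is a genuinely different argument from the one in the paper. The paper proves this lemma by a renormalization limit: it considers $a(t)u(\varphi(s+e^{-(m+n)t}h))v$ and computes its limit as $t\to\infty$ in two ways, once by observing that the negative-weight components are killed by $a(t)$, and once by Taylor-expanding $\varphi$ at $s$ and commuting $a(t)$ past $u(he^{-(m+n)t}\varphi^{(1)}(s))$ using $a(t)u(X)a(-t)=u(e^{(m+n)t}X)$; equating the two limits gives the invariance. Your argument replaces this limit with a direct differentiation of the constraint $u(\varphi(s))v\in V^{-0}(A)$, followed by weight bookkeeping: since $\mathfrak{n}^{+}(\cdot)$ raises the $A$-weight by exactly $m+n>0$, the derivative $\mathfrak{n}^{+}(\varphi^{(1)}(s))\,u(\varphi(s))v$ decomposes into pairwise distinct weight components indexed by $\lambda+(m+n)$, and the assumption that the derivative again lies in $V^{-0}(A)$ forces $\mathfrak{n}^{+}(\varphi^{(1)}(s))$ to annihilate $w^{\lambda}(s)$ for all $\lambda>-(m+n)$, in particular for $\lambda=0$. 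The computation of $w'(s)$ using that $X\mapsto u(X)$ is an additive homomorphism is correct (this requires that $\mathfrak{n}^{+}(X)$ and $\mathfrak{n}^{+}(Y)$ commute as operators on $V$, which holds because the representation of $\mathfrak{sl}(m+n,\R)$ on $V$ preserves brackets and the upper-right block is abelian). The final exponentiation step $\mathfrak{n}^{+}(\varphi^{(1)}(s))w^{0}(s)=\vect{0}\Rightarrow u(h\varphi^{(1)}(s))w^{0}(s)=w^{0}(s)$ is immediate from the power series. Both proofs ultimately rest on the same weight-shift fact, but your differentiation route avoids the asymptotic estimates and is arguably shorter and more transparent; it also delivers the slightly stronger conclusion that every weight component $w^{\lambda}(s)$ with $-(m+n)<\lambda\leq 0$, not just the zero-weight one, is annihilated by $\mathfrak{n}^{+}(\varphi^{(1)}(s))$.
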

\begin{proof}[Proof of Lemma \ref{derivative_lemma}]
For any $h \in \R$, on the one hand, 
$$a(t)u(\varphi(s + e^{-(m+n)t} h)) v = (u(\varphi(s + e^{-(m+n)t} h)) v)^0(A) + O(e^{-\lambda(m,n)t}),$$
for some $\lambda(m,n)>0$ depending on $m$ and $n$. As $t \rightarrow \infty$, 
\[
(u(\varphi(s + e^{-(m+n)t} h)) v)^0(A) \rightarrow (u(\varphi(s)) v)^0(A) \text {, and }
O(e^{-\lambda(m,n)t}) \rightarrow \vect{0}.
\]
Thus, as $t \rightarrow \infty$,
$$a(t)u(\varphi(s + e^{-(m+n)t} h)) v \rightarrow (u(\varphi(s))v)^{0}(A) .$$
On the other hand, 
$$\begin{array}{cl} & a(t)u(\varphi(s + e^{-(m+n)t} h)) v \\
= & a(t) u(h e^{-(m+n)t}\varphi^{(1)}(s)) u(O(e^{-2(m+n)t})) u(\varphi(s)) v \\
= & a(t) u(h e^{-(m+n)t}\varphi^{(1)}(s))a(-t) a(t)u(O(e^{-2(m+n)t}))a(-t) a(t)u(\varphi(s)) v \\
= & u(h \varphi^{(1)}(s)) u(O(e^{-(m+n)t})) a(t)u(\varphi(s)) v .\end{array} $$
As $t \rightarrow \infty$, $u(O(e^{-(m+n)t})) \rightarrow \mathrm{id}$, $a(t)u(\varphi(s)) v \rightarrow  (u(\varphi(s))v)^0(A)$. Therefore, as $t \rightarrow \infty$,
$$a(t)u(\varphi(s + e^{-(m+n)t} h)) v \rightarrow  u(h \varphi^{(1)}(s))(u(\varphi(s))v)^0(A).$$
This shows that $(u(\varphi(s))v)^{0}(A)$ is invariant under $\{u(h \varphi^{(1)}(s) ): h\in \R\}$.
\end{proof}

\begin{proof}[Proof of Lemma~\ref{basic_lemma_2}] The strategy of the proof is similar to that of Lemma~\ref{basic_lemma_1}. 
\par We begin with the case $m=n$. This case is studied in \cite{Yang_1} but the statement proved there is weaker than the statement here.

Fix a point $s_0 \in I$ and a subinterval $J_{s_0} \subset I$ such that 
$\varphi(s)-\varphi(s_0)$ is invertible for all 
$s\in J_{s_0}$ and moreover, $\{ \mathfrak{n}^{-}((\varphi(s_1) -\varphi(s_0))^{-1} - (\varphi(s_2) -\varphi(s_0))^{-1}): s_1, s_2 \in J_{s_0}\}$ is not contained in any proper observable subalgebra of $\mathfrak{sl}(2m,\R)$. By replacing $\varphi(s)$ by 
$\varphi(s)-\varphi(s_0)$, we may assume that 
$\varphi(s_0) = \vect{0}$.

\par  In the isomorphism $\SL(2,\R) \cong \SL(2, \varphi(s))$ (see Definition \ref{SL2X}), $\begin{bmatrix}1 & 1 \\ 0 & 1\end{bmatrix}$ corresponds to $u(\varphi(s))$, $\begin{bmatrix}1 & 0 \\ 1 & 1\end{bmatrix}$ corresponds to $u^{-}(\varphi^{-1}(s))$, and $\begin{bmatrix}0 & 1 \\ -1 & 0\end{bmatrix}$ corresponds to $\sigma(\varphi(s))$. By Corollary \ref{cor:sl2}, we have that 
$v, u(\varphi(s))v \in V^{-0}(A)$ implies that 
$$v = u^{-}(\varphi^{-1}(s)) v^0(A).$$
In particular, $v^0(A)\neq \vect{0}$. 

Taking any $s_1, s_2 \in J_{s_0}$, we have 
$$u^{-}(\varphi^{-1}(s_1)) v^0(A) =v = u^{-}(\varphi^{-1}(s_2)) v^0(A).$$
This shows that $v^0(A)$ is fixed by 
$u^{-}(\varphi^{-1}(s_1)- \varphi^{-1}(s_2))$ for all $s_1, s_2 \in J_{s_0}$. By definition, $v^0(A)$ is also fixed by $A$. Let $L$ denote the subgroup of $H$ stabilizing $v^{0}(A)$, and $\mathfrak{l}$ denote its Lie algebra. Then from the above argument we have $\mathfrak{l}$ is observable and contains $\mathcal{E}\in \mathrm{Lie}(A)$ (see \eqref{eq:E}) and 
$$\{  \mathfrak{n}^{-}((\varphi(s_1)-\varphi(s_0))^{-1} -(\varphi(s_2)-\varphi(s_0))^{-1}): s_1, s_2 \in J_{s_0}\};$$
recall that earlier we had replaced $\varphi(s)$ by $\varphi(s)-\varphi(s_{0})$ and assumed that $\varphi(s_{0})=\vect{0}$ for notational simplicity. Because $\varphi$ is {\em supergeneric\/}, in view of \eqref{eq:supergeneric} we have that $L = H$. Since $V$ is an irreducible representation of $H$, $V$ is trivial. 
\par This finishes the proof for $m=n$.

For the general case we give the proof by an inductive argument.
Suppose the statement holds for all $(m',n')$ such that $m' \leq m$, $n' \leq n$ and $m'+n' < m+n$. We want to prove the statement for $(m,n)$.

\par We choose a point $s_0$ and a subinterval $J_{s_0} \subset I$ such that the following statements hold:
\begin{enumerate}
\item If we write $\varphi(s) = [\varphi_1(s); \varphi_2(s)]$ where $\varphi_1(s)$ is the first $m$ by $m$ block, and $\varphi_2(s)$ is the rest $m$ by $n-m$ block, then for any $s \in J_{s_0}$, $\varphi_1(s) - \varphi_1(s_0)$ is invertible.
\item The curve $\psi(s) = (\varphi_1(s)-\varphi_1(s_0))^{-1}(\varphi_2(s)-\varphi_2(s_0))$ is {\em supergeneric\/} as a curve from $J_{s_0}$ to $\M(m\times(n-m), \R)$.
\end{enumerate}

Without loss of generality we may assume that $\varphi(s_0) = \vect{0}$ and 
$v \in V^{-0}(A)$.
The notations such that $u'(\cdot)$, $A_2$ and $v^{\mu_0}(A)$ have the same meaning as in the proof of Lemma \ref{basic_lemma_1}. Using the same argument as the proof of Lemma \ref{basic_lemma_1}, we could deduce that 
$$u'(\psi(s)) v^{\mu_0}(A) \in V^{- 0}(A_2)$$
for all $s \in J_{s_0}$. By inductive hypothesis, we conclude that 
$v^{\mu_0}(A)$ is fixed by the whole 
$$H' :=\begin{bmatrix}\I_m & \\ & \SL(n,\R)\end{bmatrix} \cong \SL(n,\R).$$ In particular, 
$v^{\mu_0}(A)$ is fixed by $A_2$. Let the direct sum 
$$V^{\mu_0}(A) = \bigoplus_{n\delta_1 + \delta_2 = \mu_0} V^{\delta_1,\delta_2}$$
be as in the proof of Lemma \ref{basic_lemma_1}. From the proof of Lemma \ref{basic_lemma_1} we know that any nonzero projection $(v^{\mu_0}(A))^{\delta_1, \delta_2}$ of $v^{\mu_0}(A)$ with respect to this direct sum satisfies $\delta_1$ (the eigenvalue for $A_1$) is non-negative. Because we have $\delta_2 =0$ and $n\delta_1+ \delta_2 \leq 0$, we conclude that 
$\delta_1 = \delta_2 =0$. This implies that $\mu_0 =0$. By Lemma \ref{derivative_lemma}, we have $v^0(A)$ is invariant under $\{u(h \varphi^{(1)}(s_0)): h\in \R\}$. By our assumption, $\varphi^{(1)}(s_0)$ has rank $b$. By conjugating it with elements in $H'$, we have that $u(X)$ fixes $v^{0}(A)$ for any $X$ with rank $b$. Note that the space spanned by all rank $b$ matrices is the whole space $\M(m\times n ,\R)$. This shows that $v^0(A)$ is invariant under the whole $U^{+}(A)$. Since $v^0(A)$ is also invariant under $A$, $v^0(A)$ is invariant under the whole group $H$. Since we assume that $V$ is an irreducible representation of $H$, we conclude that $V$ is trivial.
\par This completes the proof.
\end{proof}
\par Lemma \ref{basic_lemma_2} is sufficient to prove the equidistribution result under the {\em supergeneric\/} condition. Now we consider the case $n=km$ and the curve 
$$\varphi: I=[a,b] \rightarrow \M(m \times n, \R)$$
is {\em generic\/} but not {\em supergeneric\/}. In this case, we will prove the following result, which can be thought of as a  generalization of Corollary~\ref{cor:sl2}. 

%%%%%%%%%%%%%%%%%%%%%%%%%%%%%%%%%%%%%%%%%%%%%%%%%%%%%%%%%%%%%%%%%%%%%%%%%%%%%%%%%%%%%%%%%%%%%%
%%%%%%%%%%%%%%%%%%%%%%%%%%%%%%%%%%%%%%%%%%%%%%%%%%%%%%%%%%%%%%%%%%%%%%%%%%%%%%%%%%%%%%%%%%%%%%
%%%%%%%%%%%%%%%% BASIC LEMMA 3 %%%%%%%%%%%%%%%%%%%%%%%%%%%%%%%%%%%%%%%%%%%%%%%%%%%%%%%%%%%%%%%
%%%%%%%%%%%%%%%%%%%%%%%%%%%%%%%%%%%%%%%%%%%%%%%%%%%%%%%%%%%%%%%%%%%%%%%%%%%%%%%%%%%%%%%%%%%%%%
%%%%%%%%%%%%%%%%%%%%%%%%%%%%%%%%%%%%%%%%%%%%%%%%%%%%%%%%%%%%%%%%%%%%%%%%%%%%%%%%%%%%%%%%%%%%%%

\begin{lemma}
\label{basic_lemma_3}
Let $n=km$ and 
$$\varphi: I=[a,b]\rightarrow \M(m\times n, \R)$$
be an analytic {\em generic\/} curve. Let $V$ be an irreducible representation of $H=\SL(m+n,\R)$ and $v \in V$ be a nonzero vector of $V$. Let $A$ denote the diagonal subgroup as before. Suppose for all $s \in I$, 
$$u(\varphi(s))v \in V^{-0}(A),$$
then for all $s_0 \in I$ satisfying the {\em generic\/} condition, we have
$$(u(\varphi(s_0))v)^{0}(A) = \xi(s_0) u(\varphi(s_0))v$$
for some $\xi(s_0) = P^-(A) :=  U^{-}(A) Z_H(A) \subset H$.
\end{lemma}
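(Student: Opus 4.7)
My plan is to argue by induction on $k = n/m$, paralleling the inductive strategy used in the proofs of Lemmas \ref{basic_lemma_1} and \ref{basic_lemma_2}. After the substitutions $\varphi(s)\mapsto \varphi(s)-\varphi(s_{0})$ and $v\mapsto u(\varphi(s_{0}))v$, we may assume $\varphi(s_{0})=\vect{0}$, so the conclusion reduces to showing $v^{0}(A)=\xi\cdot v$ for some $\xi\in P^{-}(A)=U^{-}(A)Z_{H}(A)$, equivalently $v\in U^{-}(A)\cdot v^{0}(A)$. Note that Lemma \ref{basic_lemma_1} applied to the generic curve $\varphi$, together with the hypothesis $u(\varphi(s))v\in V^{-0}(A)$ and the analyticity of $\varphi$, already forces $v^{0}(A)\neq \vect{0}$ at a generic $s_{0}$.

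For the base case $k=1$ (i.e., $m=n$), I would pick any $s_{1}\in J_{s_{0}}$ with $\varphi(s_{1})$ invertible and apply Corollary \ref{cor:sl2} inside the $\SL(2,\varphi(s_{1}))$-subgroup of Definition \ref{SL2X}, with $v$ playing the role of ``$v$'' and $u(\varphi(s_{1}))$ the role of ``$u(r)$''. Since for $m=n$ the diagonal of $\SL(2,\varphi(s_{1}))$ is a reparametrisation of $A$ and both $v$ and $u(\varphi(s_{1}))v$ lie in $V^{-0}(A)$, the corollary gives $v = u^{-}(\varphi(s_{1})^{-1})\,v^{0}(A)$, so $\xi(s_{0})=u^{-}(-\varphi(s_{1})^{-1})\in U^{-}(A)\subset P^{-}(A)$ works.

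For the inductive step $k\geq 2$, I would write $\varphi=[\varphi_{1};\varphi_{2}]$ and set $\psi(s)=\varphi_{1}(s)^{-1}\varphi_{2}(s)\colon J_{s_{0}}\to \M(m\times(k-1)m,\R)$, which is generic. Using the factorisation $u(\varphi(s))=u'(-\psi(s))u([\varphi_{1}(s);\vect{0}])u'(\psi(s))$ with $u'(\cdot)\in Z_{H}(A)$, I would follow the highest-$A$-weight argument in the proof of Lemma \ref{basic_lemma_1} to show that on every irreducible $H'$-subrepresentation $V_{\alpha}$ of $V$, where $H'=\{\I_{m}\}\times \SL(n,\R)$, into which $v^{\mu_{0}}(A)$ projects nontrivially, the projection $v_{\alpha}$ satisfies $u'(\psi(s))v_{\alpha}\in V^{-0}(A_{2})$ for every $s\in J_{s_{0}}$. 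The inductive hypothesis applied to $\psi$ and each such $V_{\alpha}$ then produces an element of $P^{-}(A_{2})$ factoring the $A_{2}$-direction. In parallel, within each irreducible $\SL(2,\varphi_{1}(s_{1}))$-subrepresentation, Corollary \ref{cor:sl2} contributes a factor inside the contracting horospherical of $\SL(2,\varphi_{1}(s_{1}))$, which embeds into $U^{-}(A)$. One then assembles $\xi(s_{0})\in P^{-}(A)=U^{-}(A)Z_{H}(A)$ from these two families of factors.

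The main obstacle is the assembly step: a priori the inductive conclusion on different $H'$-irreducible components may yield different elements of $P^{-}(A_{2})$, and similarly for the $\SL(2,\varphi_{1}(s_{1}))$-pieces, so one must show they glue into a single global group element of $P^{-}(A)$ acting on all of $V$. I expect to exploit Lemma \ref{derivative_lemma}, which provides the additional invariance of $v^{0}(A)$ under $\{u(h\,\varphi^{(1)}(s_{0})):h\in\R\}$, together with the analyticity of $\varphi$ and the compatibility between the $\SL(2,\varphi_{1})$ and $H'$ structures, to force coherence of the local data into a single $\xi(s_{0})\in P^{-}(A)$.
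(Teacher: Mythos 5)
Your base case $k=1$ is correct and essentially matches the $m=n$ case of the paper (Corollary~\ref{cor:sl2} applied to $\SL(2,\varphi(s_1))$, whose diagonal is $A$). However, the paper's proof of the inductive part is entirely different from what you propose, and what you propose has genuine gaps that the paper's actual argument was designed to avoid.

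The paper does \emph{not} run an induction on $k$ via the $\psi$-reduction as in Lemmas~\ref{basic_lemma_1} and \ref{basic_lemma_2}. Instead it first applies Lemma~\ref{technical_lemma} (itself proved by induction on $k$) to conjugate $\varphi$ by an element of $Z_H(A)$ so that it becomes ``standard'' at $s_0$: there are $k$ parameter values $s_1,\dots,s_k$ at which $\varphi(s_i)-\varphi(s_0)$ has only the $i$-th $m\times m$ block nonzero and invertible. It then works globally in $V$ with the $k$-parameter weight lattice of $(A_1,\dots,A_k)$, proves Claim~\ref{claim_1} that the support of $v$ is contained in the admissible cone $\sum_i \Z_{\geq 0}\vect{e}_i$, and establishes a chain $v=v_k\in U^{-}(A)v_{k-1}\in\cdots\in U^{-}(A)\cdots U^{-}(A)v_0 = U^{-}(A)v^0(A)$, peeling off one $\vect{e}_i$-direction at a time via $\SL(2,\varphi(s_i))$ and the equality case of Lemma~\ref{yang_lemma}.

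Your proposed inductive step has two concrete difficulties. First, the $\mu_0$-projection argument (borrowed from Lemma~\ref{basic_lemma_1}) only controls $v^{\mu_0}(A)$, the single highest $A$-weight component; but the conclusion of Lemma~\ref{basic_lemma_3} is a statement about the full vector $v$ relative to $v^0(A)$, and when $\mu_0<0$ (which the hypothesis $v\in V^{-0}(A)$ does not exclude) these are different things. There is no obvious way to iterate or upgrade the highest-weight information to the whole vector, and the paper does not attempt to do so --- its proof avoids projecting to the top $A$-weight altogether. Second, the gluing obstacle you yourself flag is a real one: the inductive hypothesis, applied separately to each $H'$-irreducible constituent $V_\alpha$, produces elements $\xi_\alpha\in P^{-}(A_2)$ that a priori differ across components. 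Lemma~\ref{derivative_lemma} does not address this; it only gives an extra unipotent invariance of $v^0(A)$, and the paper does not use it in the proof of Lemma~\ref{basic_lemma_3} at all (it is only used in Lemma~\ref{basic_lemma_2}). The mechanism that actually forces the coherence you need is the combinatorial constraint of Claim~\ref{claim_1} together with the equality case \eqref{eq:equality} of Lemma~\ref{yang_lemma}, and both are predicated on the standard-form reduction that your sketch omits. In short, the key missing idea is Lemma~\ref{technical_lemma} and the resulting global weight-lattice argument; without it, the $\psi$-induction scheme of Lemmas~\ref{basic_lemma_1}/\ref{basic_lemma_2} does not carry over to this finer conclusion.
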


This lemma is crucial for describing the obstruction to equidistribution for {\em generic\/} curves as done in \S \ref{obstruction_equidistribution}.

\begin{definition}
\label{def:standard}
Assume $n=km$, then we could write $\Phi \in \M(m\times n, \R)$ as $[\Phi_1; \Phi_2; \dots; \Phi_k]$ where $\Phi_i$ denotes the $i$-th $m$ by $m$ block of $\Phi$. An analytic curve $\varphi: I=[a,b]\rightarrow \M(m\times n, \R)$ is called {\em standard} at $s_0 \in I$ if there exist $k$ points $ s_1, \dots, s_k \in I$ such that for $i=1,\ldots,k$, we have 
$$
\varphi(s_i)-\varphi(s_0)  =  [\vect{0}; \dots; \varphi_i(s_i)-\varphi_i(s_0); \dots; \vect{0}] ,
$$
where $\varphi_i(s_i)-\varphi_i(s_0)$ is invertible, it appears in the $i$-th $m\times m$ block and all other blocks are $\vect{0}$. 
\end{definition}

In order to prove Lemma~\ref{basic_lemma_3}, we will need the following lemma.

\begin{lemma}
\label{technical_lemma}
Assume $n=km$. For any analytic curve 
$$\varphi: I=[a,b]\rightarrow \M(m\times n, \R)$$
 which is {\em generic} at $s_0 \in I$, there exists an element $z' =z'(s_0) \in Z_H(A)$ depending analytically on $s_0$, such that the conjugated curve 
$$\phi:=z'\cdot \varphi: I=[a,b]\rightarrow \M(m\times n, \R)$$
is {\em standard} at $s_0$; where the action of $Z_H(A)$ on $\M(m\times n, \R)$ is given by \eqref{eq:ZonM}.

\end{lemma}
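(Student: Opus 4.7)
I plan to prove the lemma by induction on $k$ (where $n = km$). The base case $k = 1$ is immediate: genericness at $s_0$ supplies a point $s_1 \in I$ with $\varphi(s_1) - \varphi(s_0)$ invertible, so $z' = e$ works. For the inductive step, I exploit the recursive structure of the generic condition in Definition \ref{def:geometric_condition}. By assumption, $\Phi_1(s) := \varphi_1(s) - \varphi_1(s_0)$ is invertible on a subinterval $J_{s_0}$, and the curve
\[
\psi(s) := \Phi_1(s)^{-1}(\varphi_2(s) - \varphi_2(s_0)) : J_{s_0} \to \M(m \times (k-1)m, \R)
\]
is generic at some $s^* \in J_{s_0}$. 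Applying the inductive hypothesis to $\psi$ (which has $k-1$ blocks), I obtain $z'' = (B'', C'')$ in the appropriate centralizer and points $t_2, \ldots, t_k \in J_{s_0}$ such that $B''(\psi(t_i) - \psi(s^*))(C'')^{-1}$ is nonzero only in its $(i-1)$-th $m \times m$ block, where it equals an invertible matrix $M_i$.

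Setting $t_1 := s^*$ and $\Phi(s) := \varphi(s) - \varphi(s_0)$, the key observation is that $\ker \Phi(t_i) = \{(-\psi(t_i) y, y) : y \in \R^{(k-1)m}\}$ whenever $\Phi_1(t_i)$ is invertible. This identity allows me to construct $C^{-1}$ block-column by block-column so that $\Phi(t_i) C^{-1}$ is supported in its $i$-th block only. Specifically, for $j = 2, \ldots, k$ I define
\[
D^j := \begin{bmatrix} -\psi(t_1) F^j \\ F^j \end{bmatrix},
\]
where $F^j$ is the $(j-1)$-th block column of $(C'')^{-1}$; and
\[
D^1 := \begin{bmatrix} -\psi(t_1) G - \I_m \\ G \end{bmatrix}, \quad G := (C'')^{-1} \begin{bmatrix} M_2^{-1} B'' \\ \vdots \\ M_k^{-1} B'' \end{bmatrix}.
\]
I then set $C^{-1} = [D^1 \mid D^2 \mid \cdots \mid D^k]$ and choose $B$ to be a diagonal matrix satisfying $\det B \det C = 1$.

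The verification that $B \Phi(t_i) C^{-1}$ is in standard form is a direct computation using the identity $\widetilde{\Phi}(t_i) = \Phi_1(t_i) \psi(t_i)$. For $i, j \geq 2$ one finds $\Phi(t_i) D^j = \Phi_1(t_i)(\psi(t_i) - \psi(t_1))F^j$, which by the standardness of $z'' \cdot \psi$ equals $\Phi_1(t_i)(B'')^{-1} M_i$ when $j = i$ and vanishes otherwise. For $i \geq 2$ and $j = 1$, the choice of $G$ forces $(\psi(t_i) - \psi(t_1))G = \I_m$, yielding $\Phi(t_i) D^1 = \vect{0}$. Finally, $D^j \in \ker \Phi(t_1)$ for $j \geq 2$ by construction, whereas $\Phi(t_1) D^1 = -\Phi_1(t_1)$ is invertible, so all the ``diagonal'' blocks of the image are invertible. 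Invertibility of $C^{-1}$ itself is checked by the row reduction $[D^1 \mid D^2 \mid \cdots \mid D^k] \mapsto \begin{bmatrix} -\I_m & \vect{0} \\ G & (C'')^{-1}\end{bmatrix}$, giving $\det C^{-1} = \pm \det (C'')^{-1} \neq 0$.

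Analyticity of $z'(s_0)$ in $s_0$ is automatic, since every ingredient in the construction ($\Phi_1$, $\psi$, and the inductively provided $z''$, $M_i$) is an analytic function of $s_0$. The main obstacle I expect is guessing the correct form of $C^{-1}$: the asymmetry between the first block column (which contains the ``constant'' term $-\I_m$ and is built from a carefully chosen linear combination $G$ of the block columns of $(C'')^{-1}$) and the remaining columns (which inherit their structure directly from the standardness of $z'' \cdot \psi$) is the crucial insight. Once these formulas are written down, the rest is bookkeeping.
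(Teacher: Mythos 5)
Your proof is correct. The overall strategy matches the paper's (induction on $k$ exploiting the recursive structure of the generic condition, construction of $z'$ inside $Z_H(A)$), but the inductive step is executed differently. The paper realizes $z'$ as a product of three explicit factors: $u'(\psi(s_1))$, which normalizes $\psi(s_1)=\vect{0}$ so that $\varphi(s_1)-\varphi(s_0)$ has only its first $m\times m$ block nonzero; the inductively supplied $z''$ acting in the lower-right $\SL(n,\R)$; and a block-lower-unitriangular $z_1$ built from the entries $\psi_{i-1}^{-1}(s_i)$ that clears the first block of each $\varphi(s_i)-\varphi(s_0)$. You instead write the column blocks of $C^{-1}$ directly, using the kernel description $\ker\Phi(t_i)=\{(-\psi(t_i)y,y)\}$ together with the computations $(\psi(t_i)-\psi(t_1))F^j=(B'')^{-1}M_i\,\delta_{ij}$ and $(\psi(t_i)-\psi(t_1))G=\I_m$ for $i\geq 2$; the paper's factored form essentially reappears as a byproduct of your row reduction. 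Both arguments are valid — the paper's buys a cleaner factorization of $z'$, while yours makes the underlying kernel geometry more transparent. Both treatments leave somewhat implicit the analyticity of $z'(s_0)$ in $s_0$, which really amounts to holding $t_1,\dots,t_k$ (hence $z''$, the $M_i$) fixed while $s_0$ varies over a small neighborhood on which the requisite invertibility conditions persist; stating this explicitly would tighten either proof.
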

\begin{proof}
Replacing $\varphi(s)$ by $\varphi(s)-\varphi(s_0)$, we may assume that $\varphi(s_0) = \vect{0}$.
\par We will prove the statement by induction on $k$.
\par When $k=1$, the statement follows from the definition of {\em generic\/} property.
\par Suppose the statement holds for all $k' < k$. Then we will prove the statement for $n=km$.
\par We write $\varphi(s)=[\varphi_1(s);\varphi_2(s);\dots; \varphi_k(s)]$, where $\varphi_i(s)$ is the $i$-th $m$ by $m$ block of $\varphi(s)$. From the definition of {\em generic\/} property (Definition~\ref{def:geometric_condition}), there exist a subinterval $J_{s_0}\subset I$ such that for $s \in J_{s_0}$, 
$\varphi_1(s)$ is invertible, and the curve $\psi: J_{s_0} \rightarrow \M(m \times (n-m), \R)$
defined by $\psi(s) =[\psi_1(s); \psi_2(s);\dots; \psi_{k-1}(s)]$, where 
\[
\psi_i(s)= \varphi^{-1}_1(s) \varphi_i(s)
\]
is {\em generic\/}.
\par As before, let us denote
$$u'(\psi(s)) =\begin{bmatrix}\I_m & & \\ & \I_m & \psi(s) & \\ & & \I_{n-m}\end{bmatrix}\in Z_{H}(A)$$
for $s \in J_{s_0}$. Now we fix a point $s_1 \in J_{s_0}$ and a subinterval $J_{s_1} \subset J_{s_0}$ such that $\psi$ satisfies the {\em generic\/} condition for $s_1$ and $J_{s_1}$. Replacing $\varphi$ by
$u'(\psi(s_1))\cdot \varphi$, we get 
\[
\varphi(s_1) = [\varphi_1(s_1); \vect{0};\dots; \vect{0}] \text{ and } \psi(s_1)=\vect{0}.
\]

Let 
$$A':= \left\{a'(t):= \begin{bmatrix}\I_m & & \\ & e^{(n-m)t}\I_{m} & \\ & & e^{-mt}\I_{n-m}\end{bmatrix}: t \in \R\right\}$$
and 
$$H' := \left\{\begin{bmatrix}\I_m & \\ & X\end{bmatrix}: X \in \SL(n,\R)\right\}\subset Z_{H}(A).$$
By inductive hypothesis, there exists $z'' \in Z_{H'}(A') \subset Z_H(A)$, such that 
$z'' \cdot \psi$ is {\em standard} at $s_1$. Since $\psi(s_1)=\vect{0}$, there exist $s_2, s_3, \dots, s_k \in J_{s_1}$ such that
$$
z'' \cdot \psi(s_i) =  [\vect{0} ; \dots; \psi_{i-1}(s_i); \dots; \vect{0}], \text{ for }i=2,\dots, k,
$$
where the $(i-1)$-th $m\times m$ block $\psi_{i-1}(s_i)$ is invertible. 
Now we replace $\varphi$ by $z'' \cdot \varphi$. Note that by definition, 
$\varphi_i(s) = \varphi_1 (s) \psi_{i-1}(s)$ for $i=2,\dots, k$, and $s \in J_{s_0}$. Thus, we have for $i=2,\dots, k$,
$$\varphi(s_i) = [\varphi_1(s_i); \vect{0}; \dots; \vect{0}; \varphi_1 (s_i) \psi_{i-1}(s_i); \vect{0};\dots; \vect{0}].$$
Let $z_1$ denote the following element:
$$z_1 := \begin{bmatrix}\I_m &  & & & \\ & \I_m & & & \\ & \psi^{-1}_1(s_2) & \I_m & & \\ & \vdots & & \ddots & \\ &  \psi^{-1}_{k-1}(s_k) & \vect{0} & \cdots & \I_m\end{bmatrix} \in Z_H(A).$$
By direct calculation, we have that $z_1 \cdot \varphi$ is {\em standard} at $s_0$ with given $s_1, s_2, \dots, s_k$.
\par This completes the proof.
 
\end{proof}
\par Now we are ready to prove Lemma \ref{basic_lemma_3}.
\begin{proof}[Proof of Lemma \ref{basic_lemma_3}]
By Lemma \ref{technical_lemma}, we may conjugate the curve by some $z'(s_0) \in Z_H(A)$, such that the conjugated curve, which we still denote by $\varphi$, satisfies the following: there exist $s_1, s_2, \dots, s_k \in I$, such that
$$\varphi(s_i) -\varphi(s_0) = [\vect{0} ;\dots; \varphi_i(s_i)-\varphi_i(s_{0}) ; \vect{0}; \dots; \vect{0}] \text{ for } i=1,2,\dots, k.$$
Replacing $v$ by $u(\varphi(s_0))v$ and $\varphi(s)$ by $\varphi(s)-\varphi(s_0)$, we may assume that $\varphi(s_0) = \vect{0}$ and $v \in V^{-0}(A)$. Then it suffices to show that 
\begin{equation} \label{eq:U-}
v = \xi v^0(A)\text{, for some }\xi \in P^{-}(A).
\end{equation}

For each $i=1,2,\dots, k$, let 
$$A_i : = \left\{a_i(t) := \begin{bmatrix}e^t \I_m & & & & \\ & \ddots & & &  \\ & & e^{-t}\I_m & & \\ & & & \ddots & \\ & & & & \I_m\end{bmatrix}: t \in \R\right\},$$
where $e^{-t}\I_m$ appears in the $(i+1)$-th $m\times m$ diagonal block. We denote its Lie algebra by 
$$\mathfrak{a}_i := \{t \mathcal{A}_i : t \in \R \},$$
where $\mathcal{A}_i := \log a_i(1)$.
Let $\SL(2, \varphi(s_i))$ denote the $\SL(2,\R)$ copy in $H$ containing $A_i$ as the diagonal subgroup and $\{u(r \varphi(s_i)): r \in \R\}$ as the upper triangular unipotent subgroup. 
\par We express the representation $V$ as the direct sum of common eigenspaces of $A_1, A_2, \dots, A_k$:
\begin{equation} \label{eq:Vdelta}
V = \bigoplus_{\vect{\delta}=(\delta_1, \dots , \delta_k)} V(\vect{\delta}),
\end{equation}
where $$V(\vect{\delta}) :=\left\{v \in V : a_i(t) v = e^{\delta_i t} v 
\text{ for all } i = 1,2, \dots, k \text{ and } t \in \R\right\}.$$
% due to the decomposition of $V$ into irreducible $\SL(2, \varphi(s_i))$-representations which are preserved by the actions of $A_1,\dots, A_k$, the $\delta_{j}$'s are integers. 

\par Let  $w \in V(\vect{\delta})$. We claim that 
for all $ i =1,2, \dots, k$ and $\vect{e}_i=(-1,\ldots,-2,\ldots,-1)$, with $2$ in the $i$-th coordinate, 
\begin{equation} \label{eq:nw}
\mathfrak{n}(\varphi(s_i))w \in V(\vect{\delta}-\vect{e}_i),
\end{equation}
recall that $\mathfrak{n}(\varphi(s_i)) = \log u(\varphi(s_i))$.
\par It is straight forward to check that 
\[
[\mathcal{A}_i, \mathfrak{n}(\varphi(s_i))] = 2 \mathfrak{n}(\varphi(s_i)) \text{ and } 
[\mathcal{A}_j, \mathfrak{n}(\varphi(s_i))] =  \mathfrak{n}(\varphi(s_i)) \text{ for $j\neq i$}.
\]
Therefore, 
\[
\mathcal{A}_j \mathfrak{n}(\varphi(s_i)) w 
 = \mathfrak{n}(\varphi(s_i)) \mathcal{A}_j w + [\mathcal{A}_j, \mathfrak{n}(\varphi(s_i))] w  
= \begin{cases} (\delta_j+1) w & \text{if } j\neq i \\
                          (\delta_i+2) w & \text{if } j=i.  
    \end{cases}
    \]
This proves \eqref{eq:nw}.

\par Let $\mathcal{A} := \log a(1)$, it is easy to see that $\mathcal{A} = \mathcal{A}_1 + \cdots + \mathcal{A}_k$. Therefore,
\[
V^{\sigma}(A) = \bigoplus_{\delta_1 + \cdots + \delta_k = \sigma} V(\delta_1, \dots, \delta_k).
\]

\par On the other hand, because $A_1, \dots, A_k$ normalize $\SL(2, \varphi(s_i))$ for any $i=1,\dots,k$, we can decompose $V$ into the direct sum of irreducible representations $V_{p}$ of $\SL(2,\varphi(s_i))$ which are invariant under $A_1,\dots,A_k$: 
\begin{equation} \label{eq:Vp}
V = \bigoplus_{p} V_{p}.
\end{equation}
For each $V_{p}$,  we can choose a basis $\{w_0, w_1, \dots, w_l\}$, called a {\em  standard basis\/}, of $V_{p}$ such that for each $1\leq r\leq l$,  $w_{r}$ is contained in some weight space $V(\delta_1, \delta_2,\cdots, \delta_k)$, and we index the basis elements such that $a_i(t) w_r = e^{(l-2r)t} w_r$; that is,
\begin{equation} \label{eq:wr}
\text{if }
w_r \in V(\delta_1, \delta_2, \cdots, \delta_k) \text{ then } \delta_i = l -2r.
\end{equation}
Moreover since $\mathfrak{n}(\varphi(s_i))w_s$ is a nonzero multiple of $w_{s-1}$ for $1\leq s\leq l$, by \eqref{eq:nw} we have that  
\begin{equation} \label{eq:rj}
w_{r-j}\in V(\vect{\delta}-j\vect{e_i}), \quad \text{for } r-l\leq j\leq r. 
\end{equation}

\par Let 
$$\pi_p: V \rightarrow V_p $$
denote the canonical projection from $V$ to $V_p$ with respect to \eqref{eq:Vp}, and let 
$$q(\vect{\delta}): V \rightarrow V(\vect{\delta})$$
denote the canonical projection from $V$ to 
$V(\vect{\delta})$ with respect to \eqref{eq:Vdelta}. Then
\begin{equation} \label{eq:piq}
\pi_{p}\circ q(\vect{\delta})=q(\vect{\delta})\circ \pi_{p}.
\end{equation}

\par  We call a vector 
$\vect{\delta}\in \Z^k$ {\em admissible\/} if it can be written as $c_1 \vect{e}_1 + c_2 \vect{e}_2 + \cdots + c_k \vect{e}_k$, where $c_1, c_2, \dots, c_k$ are non-negative integers. 

\begin{claim}
\label{claim_1}
Any $\vect{\delta}=(\delta_1,\ldots,\delta_k)$ such that $q(\vect{\delta})(v)\neq 0 $ is {\em admissible}.
\end{claim}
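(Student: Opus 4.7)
The plan is to prove the claim by a maximality/contradiction argument leveraging the $\SL(2,\varphi(s_j))$-decomposition of $V$ together with the hypothesis $u(\varphi(s_j))v\in V^{-0}(A)$ for each $j$.

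First, I would unpack admissibility. The vectors $\{\vect{e}_i\}_{i=1}^k$ are linearly independent in $\R^k$, so every $\vect{\delta}\in\R^k$ has a unique expansion $\vect{\delta}=\sum_i c_i(\vect{\delta})\vect{e}_i$, and a direct computation gives
\[
c_j(\vect{\delta})=\frac{1}{k+1}\Bigl(-k\delta_j+\sum_{l\neq j}\delta_l\Bigr).
\]
Admissibility of $\vect{\delta}$ amounts to $c_j(\vect{\delta})\in\Z_{\geq 0}$ for every $j$, combining the inequality $(k+1)\delta_j\leq\sum_l\delta_l$ with an integrality constraint. The strategy is to show that any nonadmissible multi-weight in the support of $v$ forces a strictly positive total $A$-weight component to appear in some $u(\varphi(s_j))v$, contradicting the hypothesis.

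Suppose, toward a contradiction, that some $\vect{\delta}^\star$ with $q(\vect{\delta}^\star)(v)\neq 0$ fails admissibility, and choose an index $j$ with $c_j(\vect{\delta}^\star)\notin\Z_{\geq 0}$. Among all multi-weights $\vect{\delta}$ in the support of $v$ failing admissibility at this index $j$, pick $\vect{\delta}^\star$ with $\delta_j^\star$ maximal. Decomposing $V=\bigoplus_p V_p^{(j)}$ into $\SL(2,\varphi(s_j))$-irreducible, $A_1,\ldots,A_k$-invariant subspaces with standard basis $\{w_r^{(p)}\}$ such that $w_r^{(p)}\in V(\vect{\mu}_p+r\vect{e}_j)$, the projection of $q(\vect{\delta}^\star)(v)$ to $V_p^{(j)}$ sits at position $r_p^\star=(l_p-\delta_j^\star)/2$ (for those $p$ with $l_p\geq\delta_j^\star$ and $l_p-\delta_j^\star$ even). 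Its ``top partner'' $w_0^{(p)}$ carries multi-weight $\vect{\mu}_p=\vect{\delta}^\star-r_p^\star\vect{e}_j$ with total $A$-weight $\sum_l(\vect{\mu}_p)_l=\sum_l\delta_l^\star+r_p^\star(k+1)$. Applying $u(\varphi(s_j))=\exp(\mathfrak{n}(\varphi(s_j)))$, the $w_0^{(p)}$-component of $u(\varphi(s_j))v$ receives a contribution proportional to the $V_p^{(j)}$-projection of $q(\vect{\delta}^\star)(v)$; by maximality of $\delta_j^\star$, no other multi-weight can contribute to this position (a contributor would have to be of the form $\vect{\delta}^\star+t\vect{e}_j$ with $t\geq 1$, whose $j$-th coordinate $\delta_j^\star+2t$ exceeds $\delta_j^\star$). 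If some such $V_p^{(j)}$ has $l_p$ large enough that $r_p^\star(k+1)>-\sum_l\delta_l^\star$, the resulting $w_0^{(p)}$-component of $u(\varphi(s_j))v$ has strictly positive total $A$-weight, contradicting $u(\varphi(s_j))v\in V^{-0}(A)$.

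The main obstacle lies in the ``stuck'' case where every $V_p^{(j)}$ receiving a nonzero projection of $q(\vect{\delta}^\star)(v)$ has $l_p=\delta_j^\star$, so $r_p^\star=0$ and the direct argument yields no positive-weight component. To resolve this, I would invoke Lemma~\ref{derivative_lemma}, which exploits the analyticity of $\varphi$ to provide extra $\{u(h\varphi^{(1)}(s)):h\in\R\}$-invariance of $(u(\varphi(s))v)^{0}(A)$, combined with the standardization from Lemma~\ref{technical_lemma} and the genericness of $\varphi$. These additional constraints, possibly together with switching to a different coordinate $j'\neq j$, should either force admissibility of $\vect{\delta}^\star$ directly or localize to a proper $H$-subrepresentation on which the claim holds by induction. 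Handling this stuck case without losing the maximality hypothesis is the most delicate part of the argument; secondary noncancellation issues across distinct $V_p^{(j)}$'s at a common target multi-weight are controlled by the $A$-equivariance of the projections $\pi_p$ (cf.~\eqref{eq:piq}), paralleling the strategy used in the proof of Lemma~\ref{basic_lemma_1}.
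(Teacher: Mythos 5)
Your proposal circles the right ideas — $\SL(2,\varphi(s_j))$-decompositions, $A$-weight bookkeeping, and the hypothesis $u(\varphi(s_j))v\in V^{-0}(A)$ — but it does not close the argument. There are two concrete problems.

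First, the sign bookkeeping in the maximality step is off. With $w_s\in V(\vect{\mu}_p+s\vect{e}_j)$, the vectors $w_s$ with $s>r_p^\star$ (which \emph{do} contribute to the $w_0$-coefficient of $u(\varphi(s_j))\pi_p(v)$, since the raising operator lowers the index) live at weights $\vect{\delta}^\star + t\vect{e}_j$, $t\geq 1$, whose $j$-th coordinate is $\delta_j^\star - 2t < \delta_j^\star$, not $\delta_j^\star + 2t$. Moreover, contributors with $s<r_p^\star$ (i.e.\ $t\leq -1$) also feed the $w_0$-slot. So maximality of $\delta_j^\star$ over the weights failing admissibility at $j$ does not isolate the $\vect{\delta}^\star$-contribution to $w_0^{(p)}$, and the ``strictly positive total $A$-weight'' you hope to produce need not appear.

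Second, and more seriously, you explicitly leave the ``stuck case'' ($r_p^\star=0$ for every contributing $V_p^{(j)}$) unresolved and propose to patch it by invoking Lemma~\ref{derivative_lemma} and Lemma~\ref{technical_lemma}. Neither lemma is needed for Claim~\ref{claim_1}, and there is no indication they would resolve this case — this is speculation, not a proof. The paper instead runs a clean (downward) induction on $\sigma(\vect{\delta}) := \delta_1+\cdots+\delta_k$ and, crucially, picks the index $i$ with $\delta_i$ \emph{minimal} (so $\delta_i\leq\sigma(\vect{\delta})/k\leq 0$), rather than picking a $j$ where admissibility fails. Then either $\pi_p(v)$ has a nonzero coefficient above the $\vect{\delta}$-slot, in which case one lands on a weight $\vect{\delta}-j'\vect{e}_i$ with strictly larger $\sigma$ and invokes the inductive hypothesis; or $\pi_p(v)$ is supported at and below the $\vect{\delta}$-slot, in which case inequality~\eqref{eq:max-urv} of Lemma~\ref{yang_lemma} combined with $u(\varphi(s_i))v\in V^{-0}(A)$ and the inequality $\delta_i\leq\sigma(\vect{\delta})/k$ forces $\sigma(\vect{\delta})=0$ and $\delta_i=0$, hence $\vect{\delta}=\vect{0}$, which is admissible. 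In your framing you would need to rebuild essentially this case split rather than appeal to the auxiliary lemmas; as written, the proposal has a genuine gap.
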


\begin{proof}[Proof of Claim \ref{claim_1}] By \eqref{eq:wr}, $\delta_i \in \Z$ for $i=1,2,\dots, k$.  For $\vect{\delta}=(\delta_1,\ldots,\delta_k)$, define $\sigma(\vect{\delta}):=\delta_1+\cdots+\delta_k\in \Z$. Since $v\in V^{0-}(A)$, we have $\sigma(\vect{\delta})\leq 0$. We now begin by assuming that the statement of this claim is valid for any $\vect{\delta}'$ such that $\sigma(\vect{\delta}')>\sigma(\vect{\delta})$; note that the statement is vacuously true if $\sigma(\vect{\delta})=0$ (in fact, in this case we have that $\vect{\delta} = \vect{0}$). 

Let  $1\leq i\leq k$ be such that $\delta_i=\min(\delta_1,\dots,\delta_k)$. Then
\begin{equation} \label{eq:delta_i}
\text{$\delta_{i}\leq \sigma(\vect{\delta})/k\leq 0$, and if $\delta_{i}=0$ then $\vect{\delta}=\vect{0}$}. 
\end{equation}
For this choice of $i$, consider the decomposition \eqref{eq:Vp} of $V$ as $V = \bigoplus_{p} V_p$ with respect to the action of $\SL(2,\varphi(s_{i}))$. There exists some $V_p$ such that $\pi_{p}(q(\vect{\delta})v)\neq\vect{0}$. If $\{w_0, w_1 ,\dots, w_l\}$ denotes the standard basis of $V_{p}$, then by \eqref{eq:wr}, $\pi_{p}(q(\vect{\delta})v)$ is a nonzero multiple of $w_{r}$ for some $0\leq r\leq l$ such that 
$\delta_i=l-2r$.

If $\pi_p(v)$ has a non-zero coefficient on $w_{r-j}$ for some $1\leq j\leq r$, then by \eqref{eq:rj}, we have $w_{r-j} \in V(\vect{\delta}-j\vect{e}_i)$.  But then $q(\vect{\delta}-j\vect{e}_i)(v)\neq 0$ and $\sigma(\vect{\delta}-j\vect{e}_i)=\sigma(\vect{\delta})+j(k+1)>\sigma(\vect{\delta})$. By our inductive hypothesis, $\vect{\delta}-j\vect{e}_i$ is admissible, and hence $\vect{\delta}$ is admissible.

Now we can suppose that $\pi_p(v)$ is contained in the span of $w_{r},\ldots,w_{l}$. Then 
\[
a_i(t)w_{r+j}=e^{(\delta_i-2j)t}w_{{r+j}} \text{ and }\delta_{i}-2j\leq \delta_{i}, \quad \forall \,j=0,\dots,l-r. 
\]
Therefore by \eqref{eq:max-urv} in Lemma~\ref{yang_lemma} applied to $V_p$ and the action of $\SL(2,\varphi_i(s_{i}))$, we have that 
$\pi_p(u(\varphi_i(s_i))v)=u(\varphi_i(s_i))\pi_p(v)$ has a nonzero coefficient on $w_{r-j}$ for some $j$ such that 
\[
a_i(t)w_{r-j}=e^{(\delta_{i}+2j)t}w_{r-j} \text{ and } \delta_{i}+2j\geq -\delta_{i}.
\]
Hence $j\geq -\delta_{i}$. By \eqref{eq:rj} $w_{r-j}\in V(\vect{\delta}-j\vect{e}_{i})$. Therefore,
\[
q(\vect{\delta}-j\vect{e}_{i})(\pi_{p}(u(\varphi_i(s_i))v)\neq 0.
\]
By \eqref{eq:delta_i},
\begin{equation} \label{eq:sigdel}
\sigma(\vect{\delta}-j\vect{e}_{i})=\sigma(\vect{\delta})+j(k+1)\geq 
\sigma(\vect{\delta})-(k+1)\delta_{i} \geq \sigma(\vect{\delta})(1-(k+1)/k)\geq 0.
\end{equation}
By our assumption, $u(\varphi_i(s_i))v\in V^{0-}(A)$, and hence $\sigma(\vect{\delta}-j\vect{e}_{i})\leq 0$. Therefore all terms in \eqref{eq:sigdel} are zero. Therefore $\sigma(\vect{\delta})=0$ and $\delta_{i}=0$. Therefore by \eqref{eq:delta_i}, we have that $\vect{\delta}=\vect{0}$, which is admissible. This completes the proof of Claim~\ref{claim_1}.
\end{proof}

\par Now we get back to the proof of \eqref{eq:U-}. For $i=0,1,\dots, k$, let us denote 
\[
E_{0}=\{\vect{0}\} \text{ and } E_i := \{ c_1 \vect{e}_1 + \cdots + c_i \vect{e}_i : c_1, \dots, c_i \in \Z_{\geq 0}\},
\]
and define for any $v'\in V$, 
\begin{equation} \label{eq:vsubi}
v'_i := \sum_{\vect{\delta} \in E_i} q(\vect{\delta}) (v').
\end{equation}

By Claim~\ref{claim_1}, $v=v_{k}$ and $v^{0}=q(\vect{0})(v)=v_{0}$. Therefore, in order to prove \eqref{eq:U-}, it is sufficient to show the following:

\begin{equation}  \label{eq:vi}
v_{i}\in U^{-}(A) v_{i-1}, \quad \text{for all } 1\leq i\leq k.
\end{equation}

To prove this, fix any $1\leq i\leq k$ and consider the decomposition
$$V = \bigoplus_{p} V_p$$
as in \eqref{eq:Vp} into $\SL(2,\varphi(s_{i}))$-irreducible and $A_{1},\dots,A_{k}$-invariant subspaces $V_{p}$. Let $\pi_p : V \rightarrow V_p$ denote the canonical projection with respect to this decomposition. By \eqref{eq:piq} and \eqref{eq:vsubi},
\[
\pi_{p}(v')_{j}=\pi_{p}(v'_{j}), \quad\text{for all $v'\in V$ and $j=0,1,\dots,k$}. 
\]
Hence
\[
\pi_{p}(v_{i-1})=\pi_{p}((v_{i})_{i-1})=\pi_{p}(v_{i})_{i-1}.
\]
Therefore 
\begin{equation} \label{eq:p0}
\text{if $\pi_{p}(v_{i})=0$ then $\pi_{p}(v_{i-1})=0$.}
\end{equation}

Now suppose that $\pi_{p}(v_{i})\neq 0$. Let $\{w_{0},\dots,w_{l}\}$ denote a standard basis of $V_{p}$; that is, \eqref{eq:wr} holds. Let $0\leq r\leq l$ be such that 
 \begin{equation} \label{eq:viwr}
 \pi_{p}(v_{i})\subset \Span\{w_{r},\dots,w_{l}\}\setminus \Span\{w_{r+1},\dots,w_{l}\}.
 \end{equation}
 In particular, $\pi_{p}(v_{i})$ has a nonzero projection on $w_{r}$. Hence by Claim~\ref{claim_1},
 \begin{equation} \label{eq:wrci}
 w_{r}\in V(c_{1}\vect{e}_{1}+\cdots+c_{i}\vect{e}_{i}) \quad \text{for some $c_{1},\dots,c_{i}\in \Z_{\geq0}$.}
 \end{equation}
 By \eqref{eq:rj} we have that
 \begin{equation}
 \label{eq:r+j}
w_{r+j}\in V(c_{1}\vect{e}_{1}+\dots+c_{i-1}\vect{e}_{i-1}+(c_{i}+j)\vect{e}_{i}), \
 \quad \text{for all } -r\leq j\leq l-r.
 \end{equation}
 Therefore $\pi_{p}(v)\in \sum_{\vect{\delta}\in E_{i}} V^{\vect{\delta}}$. Hence
 \begin{equation} \label{eq:viv}
 \pi_{p}(v_{i})=\pi_{p}(v)_{i}=\pi_{p}(v).
 \end{equation}
 By \eqref{eq:r+j} we have
 \begin{equation} \label{eq:wrj}
 a_{i}(t)w_{r+j}=e^{-(\lambda+2j) t} \quad  \text{for $-r\leq j\leq l-r$, where $\lambda=c_{1}+\dots+c_{i-1}+2c_{i}$}.
 \end{equation}

 We apply Lemma~\ref{yang_lemma} to the $\SL_{2}(\varphi(s_{i}))$-action on $V_{p}$ and the vector $\pi_{p}(v_{i})$.  Let  $-r\leq j\leq l-r$ be such that 
 \begin{equation} \label{eq:sir}
 u(\varphi(s_{i}))\pi_{p}(v_{i})\subset \Span\{w_{r+j},\dots,w_{l}\} \setminus \Span\{w(r+j+1),\dots,w_{l}\}.
 \end{equation}
 Then by \eqref{eq:max-urv}, \eqref{eq:viwr} and  \eqref{eq:wrj} we get 
 \begin{equation} \label{eq:j-lambda}
-(\lambda+2j)\geq \lambda.
\end{equation}
On the other hand by \eqref{eq:viv} and our basic assumption we have 
\[
u(\varphi(s_{i}))\pi_{p}(v_{i})=u(\varphi(s_{i}))\pi_{p}(v)=\pi_{p}(u(\varphi(s_{i}))v)\in V^{0-}(A).
\]
Hence by \eqref{eq:r+j} and \eqref{eq:sir} we have
\begin{equation} \label{eq:lambda0-}
0\geq \sigma(c_{1}\vect{e}_{1}+\cdots+c_{i-1}\vect{e}_{i-1}+(c_{i}+j)\vect{e}_{i})=(\lambda-c_{i}+j)(-(k+1)).
 \end{equation}
 Now combining \eqref{eq:j-lambda} and \eqref{eq:lambda0-}, and  we get 
\[
c_{i}\leq \lambda+j \leq 0. 
\]
On the other hand, by \eqref{eq:wrci}, $c_{i}\geq 0$. Therefore $c_{i}=0$ and $j=-\lambda$.  Since $c_{i}=0$, by \eqref{eq:r+j} we have that the projection of $\pi_{p}(v_{i})$ on the line $\R w_{r}$ equals 
\[
\pi_{p}(v_{i})_{i-1}=\pi_{p}((v_{i})_{i-1})=\pi_{p}(v_{i-1}).
\]
 And since $j=-\lambda$, we have equality in  \eqref{eq:j-lambda}, which corresponds to equality in \eqref{eq:max-urv} of Lemma~\ref{yang_lemma}. Therefore \eqref{eq:equality} holds and in view of Definition~\ref{SL2X}, we get
 \begin{equation} \label{eq:pvi}
 \pi_{p}(v_{i})=u^{-}(\vect{0},\dots,-\varphi(s_{i})^{-1},\dots,\vect{0})\pi_{p}(v_{i-1})=\pi_{p}(u^{-}(0,\dots,-\varphi(s_{i})^{-1},\dots,0)v_{i-1}),
 \end{equation}
 where 
\[
u^{-}(\vect{0},\dots,-\varphi(s_{i})^{-1},\dots,\vect{0}):=
\left[
\begin{matrix}
I_{m}                     &           &            &              &               & \\
\vect{0}                 & I_{m} &              &              &             &  \\
\vdots                   &           & \ddots  &              &              &  \\
-\varphi(s_{i})^{-1}&           &              & I_{m}   &               &  \\
\vdots                   &           &               &            &   \ddots   & \\
\vect{0}                 &           &              &             &                 &  I_{m}
\end{matrix}
\right]
 \in \SL(2,\varphi(s_{i}))\cap U^{-}(A). 
 \] 
 Therefore, due to \eqref{eq:p0}, \eqref{eq:pvi} holds for all $p$, and hence \eqref{eq:vi} holds.
 \par This completes the proof.
 \end{proof}

\begin{remark}
\label{rmk:basic-lemma-3}
$\quad$
\begin{enumerate}
\item Though our proof works for the special case $n=km$, we conjecture that the conclusion of Lemma~\ref{basic_lemma_3} should hold for the case of general $(m,n)$.

\item From the proof we can see, if we assume $\varphi(s_0) = \vect{0}$ and $v \in V^{-0}(A)$, then $z'(s_0) \cdot v^0(A)$ is fixed by 
$$B : = \left\{b(t_1 , t_2, \dots, t_k) := \begin{bmatrix}e^{t_1} \I_m & & &  \\ & e^{t_2}\I_m & &  \\ & & \ddots & \\ & & & e^{t_k}\I_m\end{bmatrix}: t_1 + t_2 + \cdots + t_k =0\right\},$$
which is the diagonal subgroup generated by $A_1 , A_2, \dots , A_k$.
\end{enumerate}
\end{remark}

%%%%%%%%%%%%%%%%%%%%%%%%%%%%%%%%%%%%%%%%%%%%%%%%%%%%%%%%%%%%%%%%%%%%%%%%%%%%%%%%%%%%%%%%%%%%%%%%%%%%%%%%%%
%%%%%%%%%%%%%%%%%%%%%%%%%%%%%%%%%%%%%%%%%%%%%%%%%%%%%%%%%%%%%%%%%%%%%%%%%%%%%%%%%%%%%%%%%%%%%%%%%%%%%%%%%%
%%%%%%%%%%%%% PROOF OF EQUIDISTRIBUTION RESULT %%%%%%%%%%%%%%%%%%%%%%%%%%%%%%%%%%%%%%%%%%%%%%%%%%%%%%%%%%%
%%%%%%%%%%%%%%%%%%%%%%%%%%%%%%%%%%%%%%%%%%%%%%%%%%%%%%%%%%%%%%%%%%%%%%%%%%%%%%%%%%%%%%%%%%%%%%%%%%%%%%%%%%
%%%%%%%%%%%%%%%%%%%%%%%%%%%%%%%%%%%%%%%%%%%%%%%%%%%%%%%%%%%%%%%%%%%%%%%%%%%%%%%%%%%%%%%%%%%%%%%%%%%%%%%%%%

\section{Proof of the equidistribution result}
\label{proof_equidistribution}
In this section we will prove Theorem \ref{goal_thm}. The non-divergence part of the theorem has been proved in \S \ref{non-divergence} (see Remark \ref{rmk:non-divergence}). Here we will prove the equidistribution part. The proof is based on Proposition \ref{prop_algebraic_condition} and Lemma \ref{basic_lemma_2}. 
\begin{proof}[Proof of Theorem \ref{goal_thm}]
Suppose $\varphi: I \rightarrow \M(m\times n, \R)$ is {\em supergeneric\/}, and the normalized parametric measures $\{\lambda_t: t>0\}$ do not tend to the Haar measure $\mu_G$ along some subsequence $t_i \rightarrow +\infty$. By Proposition \ref{prop_algebraic_condition}, there exists some $L \in \mathcal{L}$ and $\gamma \in \Gamma$ such that
$$u(\varphi(s))g \gamma p_L \in V^{-0}(A)$$
for all $s \in I$. Then by Lemma \ref{basic_lemma_2}, we have that $ v := g\gamma p_L$ is fixed by the whole group $H$. Hence $p_L$
is fixed by the action of
$\gamma^{-1}g^{-1}Hg\gamma$. Thus
$$
\begin{array}{rcl}
\Gamma p_L & = & \overline{\Gamma p_L} \text{ since } \Gamma p_L
 \text{ is discrete} \\
 & = & \overline{\Gamma \gamma^{-1} g^{-1}Hg\gamma
 p_L} \\ & = & \overline{\Gamma g^{-1}Hg\gamma
 p_L} \\ & = & G g\gamma p_L \text{ since } \overline{Hg
 \Gamma}=G \\
  & = & G p_L.
 \end{array}
$$
This implies $G_0 p_L = p_L$ where $G_0$ is the connected component
of $e$. In particular, $\gamma^{-1} g^{-1}H g
\gamma\subset G_0$ and $G_0 \subset N^1_{G}(L)$. By \cite[Theorem
2.3]{Shah_3}, there exists a closed subgroup $F_1 \subset
N^1_{G}(L)$ containing all $\mathrm{Ad}$-unipotent one-parameter
subgroups of $G$ contained in $N^1_{G}(L)$ such that $F_1 \cap
\Gamma$ is a lattice in $F_1$ and $\pi (F_1)$ is closed. If we put
$F= g\gamma F_1 \gamma^{-1} g^{-1}$, then $H\subset
F$ since $H$ is generated by it unipotent
one-parameter subgroups. Moreover, $Fx =g \gamma \pi(F_1)$ is closed
and admits a finite $F$-invariant measure. Then since
$\overline{Hx}=G/\Gamma$, we have $F=G$. This implies
$F_1 = G$ and thus $L \lhd G$. Therefore $N(L,W)=G$. In particular,
$W\subset L$, and thus $L\cap H$ is a normal subgroup
of $H$ containing $W$. Since $H$ is a
simple group, we have $H \subset L$. Since $L$ is a
normal subgroup of $G$ and $\pi(L)$ is a closed orbit with finite
$L$-invariant measure, every orbit of $L$ on $G/\Gamma$ is also
closed and admits a finite $L$-invariant measure, in particular,
$Lx$ is closed. But since $Hx$ is dense in
$G/\Gamma$, $Lx$ is also dense. This shows that $L=G$, which
contradicts our hypothesis that the limit measure is not $\mu_G$.
\par This completes the proof.
\end{proof}

%%%%%%%%%%%%%%%%%%%%%%%%%%%%%%%%%%%%%%%%%%%%%%%%%%%%%%%%%%%%%%%%%%%%%%%%%%%%%%%%%%%%%%%%%%%%%%%%%%%%%%%%%%%%%
%%%%%%%%%%%%%%%%%%%%%%%%%%%%%%%%%%%%%%%%%%%%%%%%%%%%%%%%%%%%%%%%%%%%%%%%%%%%%%%%%%%%%%%%%%%%%%%%%%%%%%%%%%%%%
%%%%%%%%%%%%%% OBSTRUCTION OF EQUIDISTRIBUTION %%%%%%%%%%%%%%%%%%%%%%%%%%%%%%%%%%%%%%%%%%%%%%%%%%%%%%%%%%%%%%
%%%%%%%%%%%%%%%%%%%%%%%%%%%%%%%%%%%%%%%%%%%%%%%%%%%%%%%%%%%%%%%%%%%%%%%%%%%%%%%%%%%%%%%%%%%%%%%%%%%%%%%%%%%%%
%%%%%%%%%%%%%%%%%%%%%%%%%%%%%%%%%%%%%%%%%%%%%%%%%%%%%%%%%%%%%%%%%%%%%%%%%%%%%%%%%%%%%%%%%%%%%%%%%%%%%%%%%%%%%

\section{Obstruction of equidistribution}
\label{obstruction_equidistribution}

We will study the obstruction of equidistribution of the expanding curves $\{a(t)u(\varphi(I))x : t>0\}$ as $t \rightarrow +\infty$ and describe limit measures if equidistribution fails. 
\par At first, if the {\em generic\/} condition fails, then it is possible that the limit measure $\mu_{\infty}$ of $\{\lambda_t : t>0\}$ along some subsequence
$t_i \rightarrow \infty$ is not a probability measure. In other words, part of the expanding curves might escape to infinity along this subsequence. In fact, for $G= H = \SL(m+n,\R)$ and $\Gamma = \SL(m+n,\Z)$, it is not hard to construct some special curve $\varphi$ such that $a(t)u(\varphi(I))[e] \rightarrow \infty$ as $t \rightarrow \infty$. We refer the reader to \cite{Klein_Weiss} and \cite{Klein_Mar_Wang} for more examples. In this paper, we focus on {\em generic} curves.
\par If $m$ and $n$ are co-prime, the {\em generic\/} condition is the same as the {\em supergeneric\/} condition, so there is nothing to discuss in this case.
\par Therefore we consider the case $(m,n)>1$ and the analytic curve 
$$\varphi: I =[a,b] \rightarrow \M(m\times n, \R)$$
is {\em generic\/} but not {\em supergeneric\/}. However, for now, we could only handle the case $n = km$ where 
$k>1$ is some positive integer (to handle general $(m,n)$, we need some general version of Lemma \ref{basic_lemma_3}). With these assumptions, we want to describe the obstruction of equidistribution of $\{a(t)u(\varphi(I))x : t >0\}$ as $t \rightarrow \infty$.
\par In this section, we always assume that $n=km$ and the analytic curve 
$$\varphi:I = [a,b] \rightarrow \M(m\times n, \R)$$
is {\em generic\/}.
\par First note that for $L \in \mathcal{L}$, the stabilizer of $p_L$ is $N_G^1(L)$ where 
$$N_G^1(L):= \{g \in G : g L g^{-1} =L \text{ and }  \mathrm{det}(\mathrm{Ad}(g)|_{\mathfrak{l}}) =1 \}.$$

%\par Let $L^{nc}$ denote the subgroup of $L$ generated by all unipotent one-parameter subgroups contained in $L$. By \cite[Remark 4.1]{Shah_1},
%$$N(L^{nc} , W) = N(L, W).$$
%We will need the following lemma proved in \cite{eskin_mozes_shah1996}:
%\begin{lemma}[See Lemma 5.1 of \cite{eskin_mozes_shah1996}]
%Suppose that at least one of $W$, $L$ and $G$ is reductive. Then $N(L, W)$ is a union of finitely many closed double cosets of the form $Z_G(W) g L $
%where $g \in N(L, W)$. 
%\end{lemma}
%\begin{remark}
%Since $G= \SL(m+n , \R)$ is reductive, we could apply the above lemma to our case.
%\end{remark}

\begin{theorem}[See ~{\cite[Proposition 4.9]{Shah_1}}]
\label{group_level_thm}
Suppose the expanding curves $a(t)u(\varphi(s))x$ are not tending to be equidistributed along some subsequence $t_i \rightarrow +\infty$. By Proposition \ref{prop_algebraic_condition}, there exist $L \in \mathcal{L}$ and $\gamma \in \Gamma$ such that 
$$u(\varphi(s))g\gamma p_L \in V^{-0}(A),$$
for all $s \in I$. Then there exist $h \in P^- (A)$ and some algebraic subgroup $F$ of $G$ containing $A$ such that $h^{-1} F h $ fixes $v := g \gamma p_L$ and 
$$u(\varphi(s)) \in P^{-}(A) F h$$
for all $s \in I$. Recall that $P^{-}(A) = U^{-}(A)Z_H(A)$ denotes the maximal parabolic subgroup of $H$ associated with $A$.
\end{theorem}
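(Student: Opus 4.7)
The plan is to deduce the structural statement from Lemma~\ref{basic_lemma_3} applied to $v := g\gamma p_L$ and $\varphi$. Proposition~\ref{prop_algebraic_condition} supplies the hypothesis $u(\varphi(s))v \in V^{-0}(A)$ for all $s \in I$, and since the standing assumption of this section is $n = km$ with $\varphi$ generic, Lemma~\ref{basic_lemma_3} produces, for every $s_0 \in I$ satisfying the generic condition, an element $\xi(s_0) \in P^{-}(A)$ such that $(u(\varphi(s_0))v)^{0}(A) = \xi(s_0)\,u(\varphi(s_0))\,v$.

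First I would fix a reference point $s_0 \in I$ and, after replacing $v$ by $u(\varphi(s_0))v$ and $\varphi$ by $\varphi - \varphi(s_0)$ (which only conjugates the whole picture and so does not affect the statement to be proved), assume $\varphi(s_0) = \vect{0}$, so that $v = u(\varphi(s_0))v \in V^{-0}(A)$ itself. Setting $h := \xi(s_0) \in P^{-}(A)$ and $v_{0} := hv = v^{0}(A) \in V^{0}(A)$, the candidate subgroup is $F := \operatorname{Stab}_{G}(v_{0})$, an algebraic subgroup of $G$ that automatically contains $A$ (because $v_{0}$ is a zero-weight vector for $A$); the conjugate $h^{-1}Fh$ then fixes $h^{-1}v_{0} = v$, verifying the first required property. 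The remaining task is to show $u(\varphi(s)) \in P^{-}(A)\,F\,h$ for every $s \in I$. Since $F$ fixes $v_{0} = hv$, this is equivalent, via the orbit map $g \mapsto gv$, to the vector-level statement
\begin{equation*}
u(\varphi(s))v \in P^{-}(A)\cdot v_{0} \qquad \forall\, s \in I,
\end{equation*}
and, after decomposing $P^{-}(A) = U^{-}(A) Z_{H}(A)$ and noting that the $V^{0}(A)$-component of any element of $P^{-}(A)v_0$ lies in $Z_{H}(A)v_{0}$, this is in turn equivalent to $w(s) := (u(\varphi(s))v)^{0}(A) \in Z_{H}(A)\cdot v_{0}$ for every $s$.

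To establish this orbit containment I would apply Lemma~\ref{basic_lemma_3} at a general generic $s$ to obtain $u(\varphi(s))v = \xi(s)^{-1}w(s)$ with $\xi(s) \in P^{-}(A)$ and $w(s) \in V^{0}(A)$, and then combine this pointwise output with Lemma~\ref{derivative_lemma} (which gives invariance of $w(s)$ under the unipotent flow $\{u(r\varphi^{(1)}(s)) : r \in \R\}$) and with Remark~\ref{rmk:basic-lemma-3}(2) (which strengthens the invariance of the zero-weight component to the multi-diagonal group $B$ generated by $A_1,\dots,A_k$). Analyticity of $\varphi$ then lets one propagate the orbit relation from $s_0$ to a neighbourhood and, by the connectedness of $I$ together with the pointwise validity on a dense set of generic $s$, to all of $I$. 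The main obstacle is precisely this final step: arguing that the analytically varying vector $w(s) \in V^{0}(A)$ is forced to remain inside the single $Z_{H}(A)$-orbit of $v_{0}$. If a direct orbit argument does not yield the full stabilizer, one enlarges $F$ to be the algebraic subgroup generated by $\operatorname{Stab}_G(v_0)$ together with the elements of $Z_H(A)$ relating the different $w(s)$'s; so long as the enlarged $F$ still stabilizes $v_0$ and contains $A$, the group-level conclusion $u(\varphi(s)) \in P^{-}(A)Fh$ follows by pulling back the orbit statement through $g \mapsto gv$.
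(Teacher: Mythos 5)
Your reformulation of the conclusion is correct: taking $h=\xi(s_0)$ and $F=\operatorname{Stab}_G(v_0)$ where $v_0 = hv = v^0(A)$, the claim $u(\varphi(s))\in P^-(A)Fh$ is indeed equivalent, via the orbit map $g\mapsto gv_0$, to $u(\varphi(s))v\in P^-(A)\cdot v_0$, which by Lemma~\ref{basic_lemma_3} reduces to $(u(\varphi(s))v)^0(A)\in Z_H(A)\cdot v_0$. This choice of $F$ also coincides with the paper's (the paper takes $F=(\xi(s_0)g\gamma)N^1_G(L)(\xi(s_0)g\gamma)^{-1}$, which is exactly $\operatorname{Stab}_G(p_0)$ with $p_0=v_0$).

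The genuine gap is in the step you yourself flag as the ``main obstacle'': showing that $w(s):=(u(\varphi(s))v)^0(A)$ stays in the single $Z_H(A)$-orbit of $v_0$. Neither Lemma~\ref{derivative_lemma} (invariance of $w(s)$ under $\{u(r\varphi^{(1)}(s))\}$) nor Remark~\ref{rmk:basic-lemma-3}(2) (invariance of $z'(s_0)v_0$ under $B$) gives a containment of the $A$-fixed vectors $w(s)$ into one $Z_H(A)$-orbit, and ``propagating by analyticity'' begs the question: analyticity tells you $w(s)$ varies analytically, not that it is confined to a single orbit. The fallback of enlarging $F$ is not available: the statement requires $h^{-1}Fh$ to fix $v$, i.e.\ $F\subset\operatorname{Stab}_G(v_0)$, so $F$ is already maximal and cannot be enlarged without breaking the conclusion.

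The idea you are missing, and which is the crux of the paper's argument, is the algebraic open-orbit phenomenon. Since $G$ is an algebraic group acting linearly on $V$, the orbit $Gp_0$ is open in its closure, so the orbit map descends to a homeomorphism $G/F\to Gp_0$. The vector-level limit $\lim_{t\to\infty}a(t)u(\varphi(s))\xi(s_0)^{-1}p_0 = \xi(s)u(\varphi(s))\xi(s_0)^{-1}p_0$ (from Lemma~\ref{basic_lemma_3}) can then be read as a \emph{convergent} limit of $a(t)u(\varphi(s))\xi(s_0)^{-1}F$ in the quotient $G/F$. Choosing an $\Ad(a(t))$-invariant complement $W=W^0\oplus W^-\oplus W^+$ to $\mathrm{Lie}(F)$ (possible because $A\subset F$ makes $\mathrm{Lie}(F)$ $\Ad(a(t))$-invariant) and writing $u(\varphi(s))\xi(s_0)^{-1}F = \exp(w^0(s))\exp(w^-(s))\exp(w^+(s))F$ in exponential coordinates near the identity coset, the existence of the limit forces $w^+(s)=0$, which is exactly the containment $u(\varphi(s))\xi(s_0)^{-1}\in P^-(A)F$; this extends from a neighbourhood of $s_0$ to all of $I$ by analyticity. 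Without the open-orbit reduction to $G/F$ there is no mechanism to force the expanding component to vanish, and the orbit containment you want is out of reach.
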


\begin{proof}
Let $s_0\in I$ such that every point in a neighborhood $J$ of $s_0$ satisfies the {\em generic} condition. Replacing $v$ by $u(\varphi(s_0))v$ and $\varphi(s)$ by $\varphi(s)-\varphi(s_0)$, we may assume that $\varphi(s_0) = \vect{0}$. By Lemma \ref{basic_lemma_3}, for every $s \in I$, we have that
$$\lim_{t \rightarrow \infty} a(t)u(\varphi(s)) v = (u(\varphi(s)) v)^{0}(A) = \xi(s) u(\varphi(s))v, $$
for some $\xi(s)\in P^-(A)$. Let $p_0=v^0(A)$. Then $p_0=\xi(s_0)v$. This implies that 
$$\lim_{t \rightarrow +\infty} a(t)u(\varphi(s)) \xi(s_0)^{-1} p_0 = \xi(s) u(\varphi(s)) \xi(s_0)^{-1} p_0.$$

Let $F_1 : = N_G^1(L)$, then $F_1$ is the stabilizer of $p_L = (g \gamma)^{-1} v=(g\gamma)\inv\xi(s_0)\inv p_0$. Let $F := (\xi(s_0) g \gamma) F_1 (\xi(s_0) g \gamma)^{-1}$, then
$F$ is the stabilizer of $p_0$. Then $A \subset F$ since $p_0$ is invariant under $A$.

%\par By \cite{Shah_3}, there exists a closed subgroup $F_1 \subset N^1_H(L)$ containing all $\mathrm{Ad}$-unipotent one-parameter subgroup of $G$ contained in $N^1_G(L)$ such that $F_1 \cap \Gamma$ is a lattice of $F_1$ and $\pi(F_1)$ is closed. Let $F := (h^{-1}_1 g \gamma) F_1 (h^{-1}_1 g \gamma)^{-1} $ and $\bar{F}:= (h^{-1}_1 g \gamma )N^1_G(L)(h^{-1}_1 g \gamma)^{-1}$. Then $\bar{F}$ is the stabilizer of $p_0$ in $G$. By definition, $A \subset \bar{F}$. We claim that $A \subset F$. In fact, since $A \subset H\cap \bar{F}$, and since the stabilizer of $p_0$ in $H = \SL(m+n, \R)$ is generated by unipotent one-parameter subgroups, we have that $A \subset H\cap \bar{F} \subset F$. This proves the claim.

% As explained in Remark \ref{remark_generic_condition}, we may assume that $F$ contains there diagonal subgroup $B$ generated by $A_1, A_2, 
% \dots , A_k$. For $i=1,2,\dots , k$, let $\sigma_i \in H$ denote some Weyl element in $H$ such that $\sigma_i^2 = e$ and $\sigma_i a_i(t)                 % \sigma^{-1}_i = a_i(-t)$. Let $\Sigma$ denote the finite subgroup generated by $\{\sigma_1, \sigma_2, \dots , \sigma_k\}$. 

\par Since $Gp_0$ is open in its closure, the map 
$g F \mapsto g p_0 : G/F \rightarrow G p_0 $ is a homeomorphism. Thus we have that in $G/F$,
\begin{equation} \label{eq:F}
\lim_{t \rightarrow +\infty} a(t)  u(\varphi(s)) \xi(s_0)\inv F = \xi(s) u(\varphi(s)) \xi(s_0)\inv F.
\end{equation}

Since the Lie algebra of $F$ is $\{\Ad(a(t)): t \in \R \}$-invariant, there exists an $\{\Ad(a(t)): t \in \R \}$-invariant subspace $W$ of the Lie algebra of $H$ complementary to the Lie algebra of $F$. We decompose $W=S^0\oplus W^-\oplus W^+$ into the fixed point space, the contracting subspace and the expanding subspace for the action of $\Ad(a(t))$ as $t\to + \infty$. Then for all $s\in J$ near $s_0$ we have, 
\begin{equation} \label{eq:W}
\xi(s) u(\varphi(s)) \xi(s_0)^{-1} F = \exp(w^0(s))\exp(w^-(s))\exp(w^+(s))F,
\end{equation}
for all $s\in J$ near $s_0$, where $w^0\in W^0$, and $w^\pm\in W^\pm$. Combing \eqref{eq:F} and \eqref{eq:W}, we deduce that $w^+(s)=0$ for all $s$ near $s_0$. Thus we get $\xi(s)u(\varphi(s)) \xi(s_0)^{-1}F=\exp(w^0(s))\exp(w^-(s))F$. Let $\eta(s):=\exp(w^0(s))\exp(w^-(s))$. It is easy to see that $\eta(s) \in P^-(A)$. Hence
for all $s\in J$ near $s_0$, we have $u(\varphi(s))\in \xi(s)^{-1}\eta(s)F\xi(s_0)$. Therefore by the analyticity of $\varphi$, we get that
\[
\{u(\varphi(s)):s\in I\}\subset P^-(A)F\xi(s_0) \cap U^+(A)=P^-(A)(F\cap H) \xi(s_0) \cap U^+(A).
\]
\par This proves the statement for $h = \xi(s_0)$.
%\par We consider the Bruhat decomposition of $H$ with respect to $P^{-}(A)$:
%$$H = P^{-}(A) U^{+}(A) \cup_{\sigma \in \Sigma} P^{-}(A) \sigma U^{+}(A).$$
%We may write 
% $$u(\varphi(s)) h_1 = u^{-}(Y(s)) \chi(s)  u(X(s))$$
%where $\chi(s)\in Z_H(A)$ and $X(s), Y(s) \in \M(m\times n, \R)$. Then
%$$a(t) u(\varphi(s)) h_1 p_0 = 
%a(t)u^{-}(Y(s)) a(-t) \chi(s) a(t) u(X(s))a(-t) a(t) p_0.$$
%Since $a(t) u^{-}(Y(s)) a(-t) \rightarrow e$ as $t \rightarrow +\infty$ and since $p_0$ is fixed by $A$, we have that 
%$$a(t) u(X(s)) a(-t) p_0 \rightarrow \chi^{-1}(s) \xi(s) u(\varphi(s)) h_1 p_0 $$
%as $t\rightarrow +\infty$. This implies that $u(X(s)) \in F$ otherwise 
%$a(t) u(X(s)) a(-t) p_0 = u(e^{(m+n)t} X(s)) p_0$ diverges as $t \rightarrow +\infty$. 
%Thus we have 
%$$u(\varphi(s)) h_1 \in P^{-}(A) F.$$
%This proves the statement.
\end{proof}

\begin{remark}  \label{rem:obstruction}
It may be noted that since by a result of Dani and Margulis, the orbit $\Gamma p_L$ is discrete, we have that $\Gamma F_1$ is closed in $G$. Therefore for $h=\xi(s_0)\in P^-(A)$, we have $Fhg\Gamma=(h g \gamma) F_1 \Gamma$ is closed. Thus there exist analytic curves $\xi^- : I \rightarrow U^-(A)$ and $\xi^0: I \rightarrow Z_H(A)$ such that $u(\varphi(s))x\subset \xi^-(s)\xi^0(s)Fh x$ for all $s\in I$. Then as $t\to\infty$, the distance between $a(t)u(\varphi(s))x$ and $\xi^0(s)Fh x$ tends to zero, since $Fh x=Fhg\Gamma$ is a proper closed $\{a(t)\}$-invariant subset of $G/\Gamma$. Thus every limit measure of the sequence $\{\mu_t\}$ as $t\to +\infty$ is a probability measure whose support is contained in $\xi^0(I)Fhx$. Replacing $F$ by a smaller subgroup containing $A$, we can actually ensure that $Fhx$ admits a finite $F$-invariant measure. 
\end{remark}

%\begin{corollary}
%\label{group_level_cor}
%There exists 
%$g_1 \in G$ such that $A \subset g_1 F_1 g_1 ^{-1}$,
%$$u(\varphi(s)) g \gamma F_1 = u^{-}(Y(s)) \chi(s) g_1 F_1 \text{ for all } s \in I,$$
%and 
%$$ \lim_{t \rightarrow +\infty} a(t) u(\varphi(s)) g \gamma F_1 =
%\chi(s) g_1F_1, \text{ for all } s \in I .$$

%\end{corollary}
%\begin{proof}
%From the proof of Theorem \ref{group_level_thm}, we have that
%$$F = (h_1^{-1} g \gamma) F_1 (h_1^{-1} g \gamma)^{-1}.$$
%Therefore 
%\begin{equation}
%\label{equa_1}
%u(\varphi(s)) g \gamma F_1 = u(\varphi(s)) h_1 F h_1^{-1} g \gamma \text{ for all } s \in I.\end{equation}
%By Theorem \ref{group_level_thm}, we have that 
%\begin{equation}
%\label{equa_2}
%u(\varphi(s)) h_1 F = u^- (Y(s)) \chi(s) F \text{ for all } s \in I.\end{equation}
%(\ref{equa_1}) and (\ref{equa_2}) imply that 
%$$u(\varphi(s))g \gamma F_1 = u ^- (Y(s)) \chi(s) F h_1 ^{-1} g \gamma \text{ for all } s \in I.$$
%Note that $(h_1 ^{-1} g \gamma)^{-1} F h_1 ^{-1} g \gamma = F_1$,
%we have that 
%$$u(\varphi(s)) g \gamma F_1 = u ^{-} (Y(s)) \chi(s) g_1 F_1 \text{ for all } s \in I,$$
%where $g_1 = h_1 ^{-1} g \gamma$. Moreover, $A \subset F = g_1 F_1 g_1^{-1}$.
%\par Finally, 
%$$a(t) u ^{-} (Y(s) ) \chi(s) g_1 F_1 = a(t) u ^- (Y(s)) a(-t) \chi(s) a(t) g_1 F_1 .$$
%Since $ g_1 ^{-1} a(t) g_1 \in  F_1 $, $a(t) g_1 F_1 = g_1 F_1$. As $t \rightarrow +\infty$, 
%$$a(t) u^- (Y(s)) a(-t) = u^-(e^{-(m+n) t} Y(s)) \rightarrow e.$$
%Thus 
%$$\lim_{t\rightarrow +\infty} a(t) u ^- (Y(s)) \chi(s) g_1 F_1 = \chi(s) g_1 F_1 .$$
%\par This completes the proof.
%\end{proof}

We conjecture that in the general case of $(m,n)>1$, if $\varphi$ is {\em generic}, then Lemma~\ref{basic_lemma_3}, Theorem~\ref{group_level_thm}, and Remark~\ref{rem:obstruction} should hold.

%%%%%%%%%%%%%%%%%%%%%%%%%%%%%%%%%%%%%%%%%%%%%%%%%%%%%%%%%%%%%%%%%%%%%%%%%%%%%%%%%%%%%%%%%%%%%%%%%%%%%%
%%%%%%%%%%%%%%%%%%%%%%%%%%%%%%%%%%%%%%%%%%%%%%%%%%%%%%%%%%%%%%%%%%%%%%%%%%%%%%%%%%%%%%%%%%%%%%%%%%%%%%
%%%%%%%%%%%%%%%%% APPENDIX %%%%%%%%%%%%%%%%%%%%%%%%%%%%%%%%%%%%%%%%%%%%%%%%%%%%%%%%%%%%%%%%%%%%%%%%%%%
%%%%%%%%%%%%%%%%%%%%%%%%%%%%%%%%%%%%%%%%%%%%%%%%%%%%%%%%%%%%%%%%%%%%%%%%%%%%%%%%%%%%%%%%%%%%%%%%%%%%%%
%%%%%%%%%%%%%%%%%%%%%%%%%%%%%%%%%%%%%%%%%%%%%%%%%%%%%%%%%%%%%%%%%%%%%%%%%%%%%%%%%%%%%%%%%%%%%%%%%%%%%%

\appendix
\section{Relation between the condition given in {\rm \cite{ABRS}} and the {\em generic\/} condition} \label{App:AppendixA}
\par It is worth explaining the condition given in \cite{ABRS} and its relation to our 
{\em generic\/} condition. 
\par We denote $M(s): = [\I_m; \varphi(s) ] \in \M(m \times (m+n),\R)$. Given a subspace $W$ and $0<r < \frac{m \dim W}{m+n}$, we define 
the pencil $\mathcal{P}_{W,r}$ to be 
$$\mathcal{P}_{W,r} := \{M \in \M(m\times (m+n),\R) : \dim M W =r\}.$$
In \cite{ABRS}, the following theorem is announced: if a submanifold is not contained in any such pencil,
then the submanifold is extremal. In our case, it says that if the curve $\{M(s): s \in I\}$ is not contained in any pencil 
$\mathcal{P}_{W,r}$, then the curve is extremal. It is easy to see that if $W$ is a rational subspace, then $\mathcal{P}_{W,r}$ is not extremal.
So this condition is considered almost optimal.
\begin{proposition}
Suppose that the analytic curve $\varphi: I=[a,b] \rightarrow \M(m\times n, \R)$ is {\em generic\/}, then the curve $\{M(s) = [\I_m; \varphi(s)]: s \in I\}$ is not contained in any pencil $\mathcal{P}_{W,r}$.
\end{proposition}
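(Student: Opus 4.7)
The plan is to argue by contradiction and by induction on $n$, using the recursive definition of genericness. Suppose $\{M(s): s \in I\} \subset \mathcal{P}_{W,r}$ with $\dim W = d$ and $r < md/(m+n)$. I first perform an algebraic reduction: choose a basis $\{w_i = (a_i, b_i)\}_{i=1}^{d}$ of $W \subset \R^m \oplus \R^n$ adapted to the decomposition $W = W_0 \oplus W'$, where $W_0 := W \cap (\R^m \times \{\vect{0}\})$ has dimension $c$ and $W_2 := \pi_2(W) \subset \R^n$ has dimension $b = d - c$, chosen so that $b_1 = \cdots = b_c = \vect{0}$ and $b_{c+1}, \ldots, b_d$ is a basis of $W_2$. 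Writing $A = [a_1 | \cdots | a_d]$ and $B = [\vect{0}\,|\,B_2]$, the pencil condition becomes $\mathrm{rank}(A + \varphi(s) B) = r$ for all $s$. Passing to the quotient $q: \R^m \to \R^m/W_0$ and setting $\rho := r - c$, $\tilde A_2 := q A_2$, this is equivalent to
\[
\mathrm{rank}(\tilde A_2 + q \varphi(s) B_2) = \rho \quad \text{for all } s \in I,
\]
and the pencil inequality $r < md/(m+n)$ becomes $\rho < (mb - cn)/(m+n)$.

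For the base case $m = n$, after an affine shift of $\varphi$ (absorbed by replacing $W$ with a translated subspace of the same dimension) genericness gives $\varphi(s_0) = \vect{0}$ and $\varphi(s)$ invertible for $s \in J_{s_0}$. Evaluating at $s_0$ yields $\mathrm{rank}(\tilde A_2) = \rho$. For $s \in J_{s_0}$, invertibility of $\varphi(s)$ forces $\dim \varphi(s) W_2 = b$, so
\[
\mathrm{rank}(q \varphi(s) B_2) = b - \dim(\varphi(s) W_2 \cap W_0) \geq b - c.
\]
Combining with the subadditivity $\mathrm{rank}(X+Y) \geq \mathrm{rank}(Y) - \mathrm{rank}(X)$ and the $m = n$ specialization $\rho < (b - c)/2$ of the pencil bound, we obtain $\mathrm{rank}(\tilde A_2 + q \varphi(s) B_2) \geq (b - c) - \rho > \rho$, contradicting the pencil identity.

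For the inductive step $m < n$, genericness provides $s_0 \in I$, a subinterval $J_{s_0}$ with $\varphi_1(s) - \varphi_1(s_0)$ invertible, and a generic curve $\psi(s) := (\varphi_1(s) - \varphi_1(s_0))^{-1}(\varphi_2(s) - \varphi_2(s_0))$ into $\M(m \times (n - m), \R)$ on $J_{s_0}$. After an affine shift we may assume $\varphi(s) = \varphi_1(s)[\I_m,\psi(s)]$ on $J_{s_0}$. Left multiplying the rank condition by $\varphi_1(s)^{-1}$ (which preserves rank) and denoting by $B^{(1)}$ and $B^{(2)}$ the first $m$ and last $n-m$ rows of $B$, the condition transforms into $\mathrm{rank}(\varphi_1(s)^{-1} A + B^{(1)} + \psi(s) B^{(2)}) = r$. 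In the sub-case $A = \vect{0}$ (equivalently $W \subset \{\vect{0}\} \times \R^n$), the shift is constant and this is a genuine pencil on $\psi$ with parameters $(W, r)$; the inequality $r < md/(m+n) < md/n$ verifies the $\psi$-pencil bound, and the inductive hypothesis (valid since $n - m < n$) yields a contradiction. The main obstacle is the general case $A \neq \vect{0}$: the ``shift'' $\varphi_1(s)^{-1} A + B^{(1)}$ varies analytically with $s$, producing only a varying rank condition on $\psi$ rather than a genuine pencil. I expect to resolve this by tracking the image of the family in the Grassmannian $\mathrm{Gr}(r, m)$ and extracting a constant-shift substructure, for instance by modifying $W$ so as to absorb the $s$-dependence of $\varphi_1(s)^{-1} A$ and thereby reduce to the $A = \vect{0}$ case with appropriately adjusted parameters on $\psi$.
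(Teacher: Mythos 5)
Your base case ($m=n$) is correct but follows a genuinely different route than the paper: you quotient by $W_{0}=W\cap(\R^{m}\times\{\vect 0\})$, evaluate at $s_{0}$ to pin down $\mathrm{rank}(\tilde A_{2})=\rho$, and use rank subadditivity together with invertibility of $\varphi(s)$ to force $\mathrm{rank}(\tilde A_{2}+q\varphi(s)B_{2})>\rho$. The paper instead argues directly on kernels: since $\dim(\mathrm{Ker}\,M(s)\cap W)>\tfrac12\dim W$, two kernels must meet in $W$, and a nonzero vector of $\mathrm{Ker}\,M(s_{1})\cap\mathrm{Ker}\,M(s_{2})$ is killed by $\varphi(s_{1})-\varphi(s_{2})$, contradicting invertibility. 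Both are sound; the paper's is shorter and avoids any basis bookkeeping.

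The inductive step, however, has a genuine gap that you acknowledge yourself. Your decomposition hinges on $W_{0}=W\cap(\R^{m}\times\{\vect 0\})$, which makes the residual rank condition take the form $\mathrm{rank}\bigl(\varphi_{1}(s)^{-1}A+B^{(1)}+\psi(s)B^{(2)}\bigr)=r$ with an $s$-dependent ``shift'' $\varphi_{1}(s)^{-1}A$. That is not a pencil condition on $\psi$ when $A\neq\vect 0$, and the final sentence (``I expect to resolve this by tracking the image in the Grassmannian\dots'') is a program, not an argument; the proposal as written does not close the induction.

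The fix, and the essential idea in the paper, is to take a different subspace: set $W_{0}:=\mathrm{Ker}\,M(s_{0})\cap W$ rather than $W\cap(\R^{m}\times\{\vect 0\})$. The pencil hypothesis gives $\dim W_{0}=\dim W-r$, and for any $s$ one has $\dim(\mathrm{Ker}\,M(s)\cap W_{0})\geq 2(\dim W-r)-\dim W=\dim W_{0}-r$, hence $\dim M(s)W_{0}\leq r$. Because $W_{0}\subset\mathrm{Ker}\,M(s_{0})=\{(-\varphi(s_{0})w,w):w\in\R^{n}\}$, after the affine shift ($\varphi(s_{0})=\vect 0$) the second-coordinate projection identifies $W_{0}$ with a subspace of $\R^{n}$, and on $W_{0}$ the map $M(s)$ is multiplication by $\varphi(s)-\varphi(s_{0})=\varphi_{1}(s)[\I_{m};\psi(s)]$. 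Since $\varphi_{1}(s)$ is invertible on $J_{s_{0}}$, the rank of $M(s)|_{W_{0}}$ equals that of $N(s):=[\I_{m};\psi(s)]$ on $W_{0}$, and the $s$-dependent factor drops out: this is a \emph{genuine} pencil condition $\dim N(s)W_{0}=r'$ for some constant $r'\leq r$ on a subinterval. The numeric check $r<\tfrac{m\dim W}{m+n}\Rightarrow r'<\tfrac{m\dim W_{0}}{n}$ then lets the inductive hypothesis for $\psi:J\to\M(m\times(n-m),\R)$ apply. The lesson: anchoring the splitting to $\mathrm{Ker}\,M(s_{0})$ rather than to the coordinate axis is exactly what removes the varying shift that blocks your argument.
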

\begin{proof}
Without loss of generality, we may assume that every point in $I$ satisfies the {\em generic} condition.
\par We will prove the statement by induction on $(m,n)$. Without loss of generality, we may assume that $m \leq n$.
\par We first prove the statement holds for $(n,n)$. For contradiction, suppose that there exists some subspace $W$ and $0<r< \frac{\dim W}{2}$ such that
$$M(s)= [\I_n ; \varphi(s)] \in \mathcal{P}_{W,r} \text{ for all } s \in I .$$
This implies that $\mathrm{Ker} M(s) \cap W  > \frac{\dim W}{ 2} $ for all $s \in I$.  Then for any $s_1, s_2 \in I$, the dimension of 
$\mathrm{Ker} M(s_1) \cap \mathrm{Ker} M(s_2) \cap W$ is greater than $0$, since the sum of $\dim ( \mathrm{Ker} M(s_1) \cap W)$
and $\dim ( \mathrm{Ker} M(s_2) \cap W)$ is greater than $\dim W$. It is easy to see that
 $$\mathrm{Ker} M(s) = \{(-\varphi(s)w , w ) : w \in \R^n\}.$$
 Therefore, there exist $w_1, w_2 \in \R^n\setminus\{\vect{0}\}$ such that $(-\varphi(s_1)w_1, w_1) = (-\varphi(s_2) w_2, w_2)$. This implies $w_1 = w_2$ and 
 $\varphi(s_1)w_1 = \varphi(s_2) w_1$. Therefore $(\varphi(s_1) - \varphi(s_2)) w_1 =0$. But this is impossible since $w_1 \neq \vect{0}$ and 
 $\varphi(s_1) - \varphi(s_2)$ is invertible. This contradiction shows the statement for $(n,n)$.
 \par Suppose the statement holds for all $(m',n')$ such that $m' \leq m$, $n'\leq n$ and $m'+ n' < m+n$, we want to prove the statement for 
 $(m,n)$. Suppose not, then the curve $\{M(s) = [\I_m; \varphi(s)]: s \in I\}$ is contained in some pencil $\mathcal{P}_{W,r}$ where $r < \frac{m \dim W }{m+n}$. Let us fix some $s_0 \in I$ and denote $W_0 := \mathrm{Ker} M(s_0) \cap W $. Then from our assumption we have that $\dim W_0 = \dim W -r$. For any $s \in I$, since $\dim (\mathrm{Ker} M(s)  \cap W) = \dim W -r$, we have that 
 $$\dim (\mathrm{Ker}M(s) \cap W_0) = \dim (\mathrm{Ker} M(s) \cap \mathrm{Ker} M(s_0) \cap W) \geq 2(\dim W - r) - \dim W = \dim W -2r = \dim W_0 -r. $$
 Therefore, $\dim M(s) W_0 \leq r$ for all $s \in I$.
 \par We write any $w \in W \subset \R^{m+n}$ as $(w_1, w_2)$ where $w_1 \in \R^m$ and $w_2 \in \R^n$. Since $W_0 = \mathrm{Ker} M(s_0) \cap W$, every $(w_1 , w_2) \in W_0$ satisfies that $w_1 = - \varphi(s_0) w_2$. By identifying $(-\varphi(s_0) w_2 , w_2)$ with $w_2 \in \R^n$, we may consider $W_0$ as a subspace of $\R^n$. By direct calculation, we have that under this identification, $M(s) : W_0 \rightarrow \R^m$ is defined as follows:
  $$M(s): w \in W_0 \mapsto  (\varphi(s)-\varphi(s_0)) w \in \R^m .$$
  Following our previous notation, we may write $\varphi(s) = [\varphi_1 (s); \varphi_2(s)]$ where $\varphi_1(s)$ denotes the first $m$ by $m$ block of 
  $\varphi(s)$ and $\varphi_2(s)$ denotes the rest $m$ by $n-m$ block. By our assumption, $\varphi_1(s) -\varphi_1(s_0)$ is invertible for $s$ inside some subinterval $J_{s_0} \subset I$. Accordingly we may write $w \in W_0 \subset \R^n$ as $(w_3, w_4)$ where $w_3 \in \R^m$ and $w_4 \in \R^{n-m}$. For $s\in J_{s_0}$, let us denote 
  $$\psi(s) : = (\varphi_1 (s) - \varphi_1(s_0))^{-1} (\varphi_2(s) - \varphi_2(s_0)) \in \M(m \times (n - m) , \R)$$ 
  and 
  $N(s) := [\I_m;  \psi(s)] \in \M(m\times n, \R)$. By our assumption, $\psi : J_{s_0} \rightarrow \M(m \times (n-m), \R)$ is {\em generic\/}.
  Then for $w =(w_3 , w_4 ) \in W_0 \subset \R^n$,  
  $$\begin{array}{rcl} M(s) w &= & M(s)(w_3, w_4) \\
  & = & (\varphi_1 (s) - \varphi_1 (s_0)) w_3 + (\varphi_2 (s) - \varphi_2(s_0))w_4  \\
  & = & (\varphi_1(s) - \varphi_1(s_0))( w_3 + \psi(s) w_4)  \\ 
  & = & (\varphi_1 (s) - \varphi_1(s_0)) N(s) (w_3, w_4) .\end{array}$$
Since $\varphi_1 (s) - \varphi_1(s_0)$ is invertible, we have that $\dim M(s) W_0 = \dim N(s) W_0$. Therefore, 
$$\dim N(s) W_0 \leq r , \text{ for all } s \in J_{s_0}.$$ This implies that there exists some $r' \leq r$ and some subinterval $J'_{s_0} \subset J_{s_0}$ such that $\dim N(s) W_0 = r'$ for all $s \in J'_{s_0}$, i.e., 
$$N(s) \in \mathcal{P}_{W_0, r'} \text{ for all }  s \in J'_{s_0} .$$
But this contradicts our inductive assumption for case $(m, n-m)$. In fact, $W_0 \subset \R^{m +(n-m)}$, $N(s) = [\I_m; \psi(s)]$ where the curve $\psi(s) \in \M(m\times (n-m), \R)$ is {\em generic\/}. Thus to apply the inductive assumption for $(m, n-m)$, it suffices to check that $r' < \frac{m \dim W_0}{ n}$. Since $r' \leq r$, we only need to show that $r < \frac{m \dim W_0}{ n}$. The inequality is equivalent to 
$$n r < m \dim W_0 = m (\dim W - r).$$
It is straightforward to check that it is the same as 
$$r < \frac{m \dim W}{ m+n},$$
which is our assumption. This allows us to apply the inductive assumption and conclude the contradiction.
\par This completes the proof.
\end{proof}
Therefore the {\em generic\/} condition implies the condition given in \cite{ABRS}.

\printbibliography

\end{document}